\newtheorem{theorem}{Theorem}[section]
\newtheorem{lemma}[theorem]{Lemma}
\newtheorem{definition}[theorem]{Definition}
\newtheorem{remark}[theorem]{Remark}
\newtheorem{corollary}[theorem]{Corollary}
\def\sqr#1#2{{\vcenter{\vbox{\hrule height.#2pt
              \hbox{\vrule width.#2pt height#1pt \kern#1pt \vrule width.#2pt}
              \hrule height.#2pt}}}}
\def\be{\begin{equation}}
\def\ee{\end{equation}}
\def\bea{\begin{eqnarray}}
\def\eea{\end{eqnarray}}
\def\ca{{\cal A}}
\def\cd{{\cal D}}
\def\l{\lambda}
\def\g{\gamma}
\def\t{\tau}
\def\m{\mu}
\def\a{\alpha}
\def\b{\beta}
\def\d{\delta}
\def\dbC{\mathbb{C}}
\def\dbR{\mathbb{R}}
\def\cA{{\cal A}}
\def\cD{{\cal D}}
\def\cH{{\cal H}}
\def\Si{\Sigma}
\def\ss{\smallskip}
\def\ms{\medskip}
\def\q{\quad}
\def\qq{\qquad}
\def\3n{\negthinspace \negthinspace \negthinspace }
\def\2n{\negthinspace \negthinspace }
\def\1n{\negthinspace }
\def\limsup{\mathop{\overline{\rm lim}}}
\def\lan{\mathop{\langle}}
\def\ran{\mathop{\rangle}}
\def\cd{\cdot}
\def\Re{{\mathop{\rm Re}\,}}
\def\Im{{\mathop{\rm Im}\,}}
\def\({\Big (}
\def\){\Big )}
\def\[{\Big[}
\def\]{\Big]}
\def\bde{\begin{definition}}
\def\ede{\end{definition}}
\def\be{\begin{equation}}
\def\bel{\begin{equation}\label}
\def\ee{
\end{equation}}
\def\bex{\begin{example}}
\def\eex{\end{example}}
\def\bt{\begin{theorem}}
\def\et{\end{theorem}}
\def\bc{\begin{corollary}}
\def\ec{\end{corollary}}
\def\bl{\begin{lemma}}
\def\el{\end{lemma}}
\def\bp{\begin{proposition}}
\def\ep{\end{proposition}}
\def\bas{\begin{assumption}}
\def\eas{\end{assumption}}
\def\br{\begin{remark}}
\def\er{\end{remark}}
\def\ba{\begin{array}}
\def\ea{\end{array}}
\def\ed{\end{document}}
\def\ds{\displaystyle}
\def\ns{\noalign{\ss}}
\theoremstyle{remark}
\theoremstyle{plain}
\newtheorem{proposition}[theorem]{Proposition}
\newcommand{\curly}[1]{\left\{#1\right\}}
\begin{document}
\title
{\bf Regularity Analysis for Two Coupled Second Order Evolution Equations
\thanks{CHXD and QZ was supported by the National Natural Science Foundation of China (grants No. 12271035, 12131008) and Beijing Municipal Natural Science Foundation (grant No. 1232018).}}
\author{
		Chenxi Deng\thanks{School of Mathematics and Statistics,
			Beijing Institute of Technology, Beijing 100081, China (email: chenxideng@bit.edu.cn).}
		\and Zhaobin Kuang\thanks{Computer Science Department, Stanford University, Stanford 94305, U.S.A.(email: zhaobin.kuang@gmail.com).}
        \and Zhuangyi Liu\thanks{Department of Mathematics and Statistics, University of Minnesota, Duluth, MN 55812, U.S.A.(email: zliu@d.umn.edu).}
        \and
		Qiong Zhang\thanks{  School of Mathematics and Statistics, Beijing Key Laboratory on MCAACI,
			Beijing Institute of Technology, Beijing 100081, China (email: zhangqiong@bit.edu.cn)}
	}

\maketitle
\date{}
{\small {\bf Abstract.} We investigate the regularity of the $C_0$-semigroup associated with a system of two coupled second order evolution equations with indirect damping, whose stability was recently studied in \cite{Hao-Kuang-Liu-Yong 2025}.  By deriving the asymptotic expression of the generator’s eigenvalues, we partition the parameter space into several disjoint regions, where the semigroup exhibits either analyticity or Gevrey class regularity. Together with the estimate  of the resolvent of the generator on the imaginary axis, we give a complete and sharp regularity characterization for this system. 
\vskip 2mm

{\small {\it Keywords. Coupled second order evolution equations; Indirectly damping; Analyticity; Gevrey class regularity }

\ms

{\bf MSC (2010)}: 35B65, 35K90, 35L90, 47D03, 93D05 \hspace{4 in}
\section{Introduction}
\label{sec:intro}
\setcounter{equation}{0}
\setcounter{theorem}{0}

Let $H$ be a complex Hilbert space with the inner product
$\lan\cd\,,\cd\ran$ and the induced norm $\|\cd\|$. We consider the
following coupled second order evolution equations:
\bel{1.1}\left\{\2n\ba{ll}
\ns\ds u_{tt}=-aA^\g u+ bA^\a y_t,\\
\ns\ds y_{tt}=-Ay - bA^\a u_t - k A^\b y_t,\\
\ns\ds u(0)=u_0,\quad u_t(0)=v_0, \quad y(0)=y_0, \quad y_t(0)=w_0 \ea\right.\ee
in the space $H\times H$, where $A$ is a self-adjoint and positive definite operator; $b\ne0$, $a,k>0$. The parameters 
$(\a,\b,\g)\in E:= [0, \frac{\g+1}{2}]\times[0,1]\times [\frac{1}{2}, 2]$ represent the order of coupling, damping, and indirectly damped equation, with respect to the operator $A$. The well-posedness and asymptotic stability of this system were investigated in \cite{Hao-Kuang-Liu-Yong 2025}. They proved that the system generates a strongly continuous semigroup of contractions $e^{\mathcal{A}_{\a,\b,\g}t}$ on the Hilbert space $D(A^\frac{\gamma}{2})\times H \times D(A^\frac{1}{2})\times H$. When $(a,\gamma)\ne (1,1)$, the parameter region $E$ was partitioned into five sub-regions $S_1(\gamma), S_2(\gamma), \cdots, S_5(\gamma)$  (see Figure \ref{fig:r0} in next Section). By the frequency domain method and spectral analysis, they showed that the system is exponentially stable in $S_1(\gamma)$; polynomially stable with optimal order in $S_2(\gamma),S_3(\gamma), S_4(\gamma)$; and strongly stable in $S_5(\gamma)$. When $(a,\gamma)=(1,1)$, the exponentially stable region $S_1(1)$ expands due to the same wave speed phenomenon. 
With a different partition, this case was treated separately. 

Our goal is to add a complete regularity analysis to this system. If $(a,\gamma)\neq (1,1)$, let $R(\gamma)$ be the interior of the exponentially stable region $S_1(\gamma)$ given in \cite{Hao-Kuang-Liu-Yong 2025}, together with its boundary $\beta=1$, i.e., 

\begin{align}\label{eq:R}
    R(\gamma):= \left\{ \begin{array}{ll} \Big\{(\a,\beta,\gamma)\in S_1(\g)\bigm| 0 < 2\a-\beta < \g, \; 2\a + \beta - \g>0,\; 0< \beta\le 1\Big\},  & 1< \g \le 2 \\
    \Big\{(\a,\beta,\gamma)\in S_1(\g)\bigm| 0 < 2\a-\beta < \g, \; 2\a + \beta + \g > 2,\; 0< \beta\le 1\Big\} \bigcup \Big\{(\frac{\g+1}{2}, 1, \g)\Big\} & \frac{1}{2} \le \g \le 1.
    \end{array} \right.
\end{align}  
If $a=\gamma=1$, denote

\begin{align}\label{eq:tildeR}
    \widetilde{R}:=\Big\{(\a,\b)\in [0,1]^2\bigm| 0 < 2\alpha-\beta < 1, 0<\beta\le 1 \Big\}\bigcup \{ (1,1)\}.
\end{align} 

We only need to consider the regularity in regions  $R(\gamma)$ and $ \tilde{R}$, since there is always a sequence of system eigenvalues with a vertical asymptote if $(\a,\b,\g)\notin R(\gamma)$ or $(\a,\b)\notin \tilde{R}$, which does not meet the necessary spectrum condition for a differentiable semigroup \cite{Hao-Kuang-Liu-Yong 2025, Hao-Liu-Yong 2015}.  

Let us first give a brief review about the development in the study of stability and regularity of an abstract system.  Around the late 1980s and early 1990s, the model of directly damped system
\bel{1.2}
u_{tt} = -Au - Bu_t 
\ee
was investigated where $A$ is a self-adjoint positive definite operator in a Hilbert space, $B$ is equivalent to $A^\alpha$, and the parameter $\alpha$ represents the relative order of damping. When $\alpha=0, \frac{1}{2}, 1$, it models the viscous damping, the structural damping, and the Kelvin-Voigt damping, respectively. The system was shown to be exponentially stable for $\alpha \in [0, 1]$, analytic for $\alpha \in [\frac{1}{2}, 1]$; of Gevrey class $\delta>\frac{1}{2\alpha}$ for $\alpha \in (0, \frac{1}{2})$; not differentiable for $\alpha=0$, see \cite{Chen-Triggiani 1989, Chen-Triggiani 1990, Huang 1985, Huang 1988, Huang-Liu 1988} and the references therein. Subsequently, the optimal polynomial stability of order $\frac{1}{2|\alpha|}$ for $\alpha<0$ was added to this list \cite{Liu-Zhang 2015}.

In 1993, David Russell \cite{Russell 1993} introduced an abstract framework of indirectly damped system,
i.e., a conservative system coupled with a directly damped system. He pointed out that achieving a stability and regularity profile similar to that of the directly damped system is desirable, but remains a challenging task. He wrote in that paper, ``At the present time it does not appear possible to give a result for indirect mechanisms that even approaches the known direct damping results just listed in mathematical generality''.
 Since then, significant progress has been made in this direction, largely due to advances in the frequency domain characterization of semigroup properties and its applications \cite{ Batty-Chill-Tomilov 2016, Borichev-Tomilov 2010, Liu-Rao 2005, Liu-Yong 1998, Taylor 1989}
In particular, extensive studies have been conducted on the following thermoelastic system:
\bel{1.3}\left\{\2n\ba{ll}
\ns\ds (I + A^\gamma)u_{tt}=-aA^\g u + bA^\a y,\\
\ns\ds y_{t}= - bA^\a u_t+ k A^{\frac{\beta}{2}} q,\\
\ns\ds \tau q_{t}= -q-A^{\frac{\beta}{2}}y,\\
\ns\ds u(0)=u_0,\quad u_t(0)=v_0, \quad y(0)=y_0, \quad y(0)=q_0, \ea\right.\ee
where $A, a, b, k, \alpha, \beta$ are as in \eqref{1.1} and $\gamma \in [0,1]$.
When the heat conduction follows Fourier's law (i.e., $\tau = 0$), a complete analysis of stability and regularity for the abstract coupled hyperbolic–parabolic system \eqref{1.3} has been carried out in  \cite{Ammar-Bader-Benabdallah 1999, Hao-Liu 2013, Hao-Liu-Yong 2015,Rivera-Racke 1996} for $\gamma = 0$ and in \cite{DellOro-Rivera-Pata 2013, Sare-Liu-Racke 2019, Kuang-Liu-Sare 2021} for $\gamma\in (0,1]$. In the case where $\tau \neq 0$, the model transitions to Cattaneo's law of heat conduction, and stability results are { obtained} in \cite{Deng-Han-Kuang-Zhang 2024, Han-Kuang-Zhang 2023}. For { other} related studies on directly damped or weakly coupled indirectly damped second-order evolution systems, we refer to \cite{ Alabau-Boussouira 2002, Alabau-Cannarsa-Guglielmi 2011,Alabau-Cannarsa-Komornik 2002, Ammari-Shel-Tebou 2021, Ammari-Shel-Tebou  2023, Kuang-Liu-Tebou 2022} and the references therein.

Motivated by the above system, this paper advances the regularity analysis of the indirectly damped coupled hyperbolic system \eqref{1.1}, moving a step closer to Russell’s wishlist. We consider two distinct cases depending on the wave speeds: (i) the case of different wave speeds ( $a$ and $\g$ { do} not simultaneously equal to 1), and (ii) the case of the same wave speeds ($a = \gamma = 1$). In the case of different wave speeds, the region $R(\gamma)$ is partitioned into five disjoint regions $R_1(\g)$–$R_5(\g)$ (see the definition in \eqref{2.4}). We show that the $C_0$-semigroup generated by \eqref{1.1} is analytic in $R_1(\g)$, while it belongs to a Gevrey class with different orders in regions $R_2(\g)$–$R_5(\g)$ (see Theorem \ref{thm:different}). Similarly, in the identical wave speed case, the region $\tilde{R}$ is divided into four disjoint regions $\tilde{R}_1$–$\tilde{R}_4$ (see the definition in \eqref{2.5}), where the semigroup is analytic in $\tilde{R}_1$ and exhibits Gevrey class regularity with appropriate indices in $\tilde{R}_2$–$\tilde{R}_4$ (see Theorem \ref{thm:same}). Compared to the case of different wave speeds, the identical wave speed setting yields improved regularity results. This improvement stems from the fact that, under the same wave speeds, the damping mechanism is transferred more effectively between the two coupled equations.

Our approach begins by establishing a resolvent estimate for the generator of the semigroup associated with \eqref{1.1} in each region via a contradiction argument. We then apply Lemma \ref{lemma:method} to derive regularity results based on the obtained estimate. In addition,  we also analyze the asymptotic expansion of eigenvalues associated with the semigroup generator,  which provides key information on the partition of the regularity region and the relation between the Gevrey class order and the parameters $(\a, \b, \g)$. Based on the analysis of the eigenvalues, we can further prove that the orders of Gevrey class previously acquired are optimal.

The rest of the paper is organized as follows. In Section \ref{sec:pre}, we introduce some technical theorems and lemmas that will be used throughout this paper and state our main results. In Sections \ref{sec:different} and \ref{sec:same}, we establish our main results for the cases where the two coupled equations have different and identical wave speeds, respectively. In Section \ref{sec:Asymptotic}, we explain the symbolic manipulation technique used to determine the asymptotic behavior of the eigenvalues of the system. Finally, we will give some applications of our main theorems in Section \ref{sec:applications}. A summary of the stability and regularity results of system \eqref{1.1} is included in Section \ref{sec:conclusion}.

\ms

\section{Preliminaries and main results}\label{sec:pre}
\setcounter{equation}{0}
\setcounter{theorem}{0}
Define the Hilbert space 
$$\cH:=\cD(A^{\g\over2})\times H\times D(A^{1\over2})\times H$$
with the inner product 
$$\lan U_1,U_2\ran{}\1n_\cH=a\lan A^{\g\over2}u_1,A^{\g\over2}u_2\ran+\lan
v_1,v_2\ran+\lan A^{1\over2}y_1,A^{1\over2}y_2\ran+\lan w_1,w_2\ran,\qq \ U_i=\begin{pmatrix}u_i\\ v_i\\
y_i\\w_i\end{pmatrix}\in\cH,~i=1,2.$$
 By denoting $v=u_t, w=y_t$ and
$U_0=(u_0,v_0,y_0,w_0)^\top$, system (\ref{1.1}) can be written as an
abstract linear evolution equation on the space $\cH$,
\bel{2.1}\left\{\2n\ba{ll}
\ns\ds\frac{dU(t)}{dt}=\cA_{\a,\b,\g}U(t),\qq t\ge0, \\
\ns\ds U(0)=U_0,\ea\right.\ee
where the operator $\cA_{\a,\b,\g}:\cD(\cA_{\a,\b})\subseteq\cH\to\cH$
is defined by
\bel{2.2}\cA_{\a,\b,\g}=\begin{pmatrix}
0 & I & 0 & 0\\
-aA^\g & 0 & 0 & bA^\a \\
0 & 0 & 0 & I \\
0 & -bA^\a & -A & -kA^\b \end{pmatrix},\ee
with the domain
\begin{align*}
    \cD(\cA_{\a,\b,\g})=\{U\in \cH\,|\,\cA_{\a,\b,\g}U\in \cH\}.
\end{align*}
 It is known  that
$\ca_{\a,\b,\g}$ generates a
$C_0$-semigroup $e^{\ca_{\a,\b,\g}t}$ of contractions on $\cH$
(\cite{Hao-Kuang-Liu-Yong 2025}). Then the solution to the evolution equation
(\ref{1.2}) admits the following representation:
$$U(t)=e^{\ca_{\a,\b,\g}t}U_0,\qq t\ge0.$$

\begin{remark}
 It was pointed out in \cite{Hao-Kuang-Liu-Yong 2025}  that the set
$$
 \cD_0:=\cD(A^\g)\times\cD(A^{\a\vee
{\frac{\gamma}{2}}})\times\cD(A)\times\cD(A^{\a\vee\beta\vee{\frac{1}{2}}}) \subseteq \cD(\cA_{\a,\b,\g})
$$ 
is dense in $\cH$,  where $a \vee b=\max\{a, b\}$  for  $a, b\in\mathbb{R} $. For convenience of presentation, we still write the operator $\cA_{\a,\b,\g}$ in the matrix form \eqref{2.2}, since only $U\in \cD_0$ will be used in the subsequent analysis. Another approach was given in \cite{Sare-Liu-Racke 2019, Kuang-Liu-Sare 2021}
by writing the system operator in factorized form.  
\end{remark}

Before going further, let us recall some definitions relevant to the regularity of $C_0$-semigroups.

\ms

\begin{definition}
    Let $e^{\mathcal{A} t}$ be a $C_0$-semigroup on
a Hilbert space $\cH$ with the 
\begin{itemize}
    \item  Semigroup $e^{\cA  t}$ is said to be {\rm analytic} if $e^{\cA t}$ admits an extension $T(z)$ for $z\in\Delta_{\theta}:=\{z\in\mathbb{C}\,|\,|\arg z|<\theta\}$ for some $\theta\in(0,\frac{\pi}{2}]$ such that
    \begin{enumerate}[label=(\roman*)]
        \item $z\mapsto T(z)$ is analytic;
        \item $T(z_1+z_2)=T(z_1)T(z_2)$ for any $z_1,z_2\in \Delta_{\theta}$;
        \item $\underset{\Delta_{\theta'}\ni z\to0}{\lim}\|T(z)U-U\|=0$ for all $U\in \mathcal{H}$ and $0<\theta'<\theta$.
    \end{enumerate}

    \item Semigroup $e^{\cA t}$ is said to be of {\rm Gevrey class $\delta$ (with $\delta > 1$)} if it is infinitely differentiable and for any compact set $\mathcal{K}\subseteq(0,\infty)$, and each $r>0$, there exists a constant $C(\mathcal{K},r)$ such that
    \begin{align}\label{2.3}
        \|\cA^n e^{\cA t}\|\leq Cr^n(n!)^{\delta},\qq t\in\mathcal{K},n\ge0.
    \end{align}

    \item  Semigroup $e^{\cA t}$ is said to be {\rm differentiable} if
for any $x\in\cH$, $t\mapsto e^{\cA t}x$ is differentiable on
$(0,\infty)$.
\end{itemize}
\end{definition}
\noindent Note that an analytic semigroup satisfies \eqref{2.3} with $\delta=1$. A semigroup which is analytic or is of Gevrey class $\delta>1$ is always infinitely differentiable.

Recall the definitions of $R(\g)$ and $\tilde{R}$ in \eqref{eq:R} and \eqref{eq:tildeR}. As mentioned in the introduction, if $(\alpha,\beta,\gamma)\notin R(\g)$ or $(\alpha,\beta)\notin \tilde{R}$, the system admits a sequence of eigenvalues with a vertical asymptote, thereby violating the spectral criterion for differentiability of
$e^{\ca_{\a,\b,\g}t}$. Therefore, it suffices to study the regularity of system \eqref{2.1} only within the regions $R(\g)$ and $ \tilde{R}$, respectively.

\begin{figure}[h]
\centering
\centering
\begin{subfigure}{0.19\textwidth}
\centering
\includegraphics[width=\textwidth]{./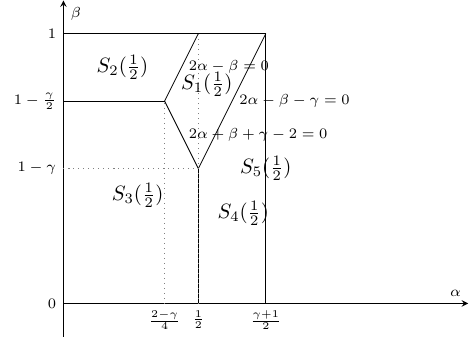}
\end{subfigure}
\begin{subfigure}{0.19\textwidth}
\centering
\includegraphics[width=\textwidth]{./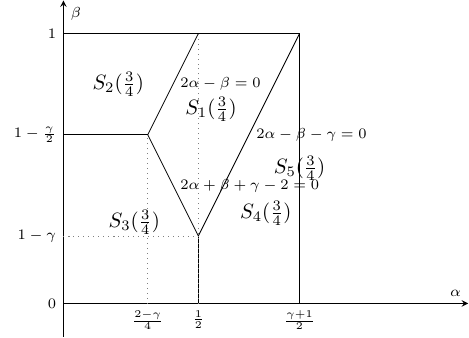}
\end{subfigure}
\centering
\begin{subfigure}{0.19\textwidth}
\centering
\includegraphics[width=\textwidth]{./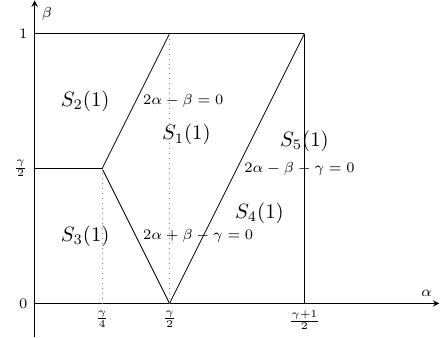}
\end{subfigure}
\begin{subfigure}{0.19\textwidth}
\centering
\includegraphics[width=\textwidth]{./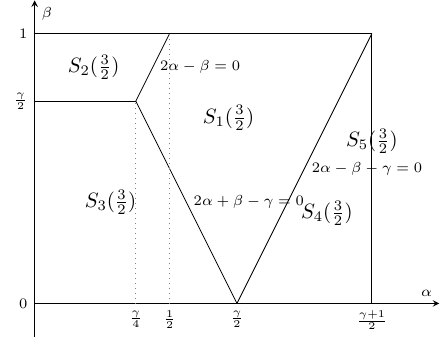}
\end{subfigure}
\begin{subfigure}{0.19\textwidth}
\centering
\includegraphics[width=\textwidth]{./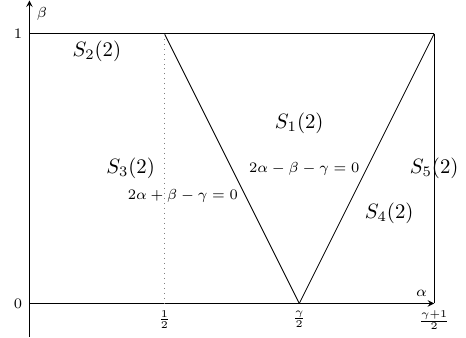}
\end{subfigure}\\
\begin{subfigure}{0.19\textwidth}
\centering
\includegraphics[width=\textwidth]{./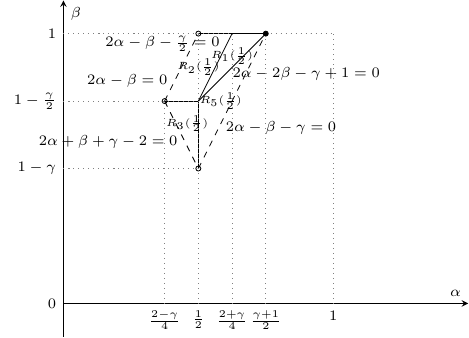}
\caption{$\gamma=\frac{1}{2}$}
\end{subfigure}
\begin{subfigure}{0.19\textwidth}
\centering
\includegraphics[width=\textwidth]{./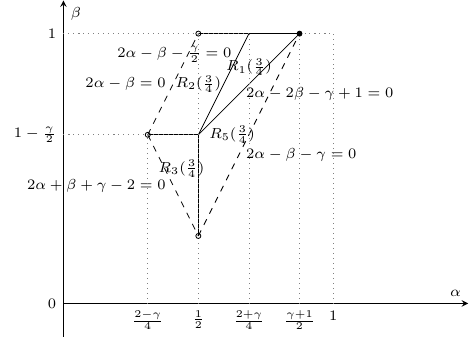}
\caption{$\gamma=\frac{3}{4}$}
\end{subfigure}
\centering
\begin{subfigure}{0.19\textwidth}
\centering
\includegraphics[width=\textwidth]{./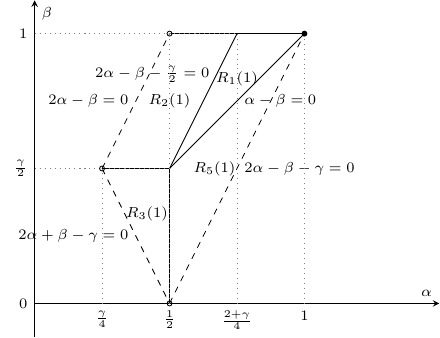}
\caption{$\gamma=1$, $a\ne 1$}
\end{subfigure}
\begin{subfigure}{0.19\textwidth}
\centering
\includegraphics[width=\textwidth]{./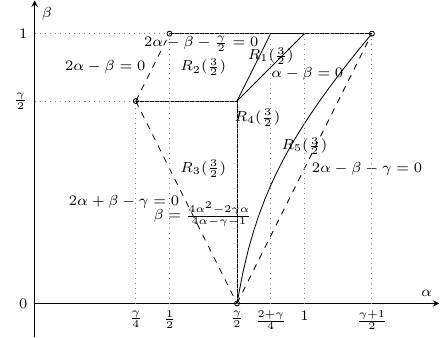}
\caption{$\gamma=\frac{3}{2}$}
\end{subfigure}
\begin{subfigure}{0.19\textwidth}
\centering
\includegraphics[width=\textwidth]{./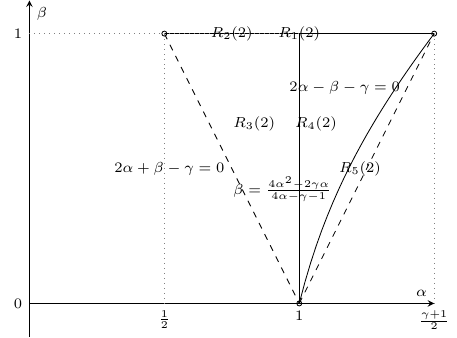}
\caption{$\gamma=2$}
\end{subfigure}
\caption{Regions of stability (the first row) and regularity (the second row). The figures of regions of stability are from \cite{Hao-Kuang-Liu-Yong 2025}.}
\label{fig:r0}
\end{figure}
When $(a,\gamma)\ne (1, 1)$, we partition the 3-d region $R(\gamma)$ into five disjoint subregions, see Figure \ref{fig:r0}: 
\begin{align}\label{2.4}
\begin{array}{lll}
    R_1(\gamma)&:=
    \Big\{(\a,\b,\g)\in R(\g) \bigm|4\alpha-2\beta-\gamma \ge 0, 2\alpha-2\beta+((1-\gamma)\vee0)\le 0 \Big\}, \quad  \frac{1}{2} \le \g \le 2\\   
    R_2(\gamma)&:=\Big\{(\a,\b,\g)\in R(\g) \bigm|2\alpha-\beta>0,4\alpha-2\beta-\gamma<0, \beta  \ge (1-\frac{\gamma}{2})\vee\frac{\g}{2} \Big\}, \quad \quad \frac{1}{2} \le \g \le 2\\
    R_3(\gamma)&:=\Big\{(\a,\b,\g)\in R(\g) \bigm|2\alpha+\beta-(\gamma\vee (2-\gamma))>0,\alpha  \le \frac{1\vee \g}{2}, \beta<(1-\frac{\gamma}{2})\vee\frac{\g}{2} \Big\}, \quad \frac{1}{2} \le \g \le 2\\
    R_4(\gamma)&:=\Big\{(\a,\b,\g)\in R(\g) \bigm|\a>\beta,\a>\frac{\g}{2},\beta>\frac{4\alpha^2-2\alpha\gamma}{4\alpha-\gamma-1}\Big\}, \quad 1 < \gamma\le 2\\
    R_5(\gamma)&:=\left\{
\begin{array}{l}
\big\{(\a,\b,\g)\in R(\g) \bigm| 2\alpha-\beta-\gamma<0,\frac{1}{2}<\alpha<\frac{\gamma+1}{2},2\alpha-2\beta-\gamma+1>0\big\}, \quad \frac{1}{2}\le\g\le 1\\
 \big\{(\a,\b,\g)\in R(\g) \bigm| 2\alpha-\beta - \g <0,\beta\le\frac{4\alpha^2-2\gamma\alpha}{4\alpha-\gamma-1}\big\}, \quad 1 < \g\le 2
\end{array}
\right.
\end{array}
\end{align}
 Note that the edge $\{(\frac{\g+1}{2}, 1, \g),\; | \;\g\in [\frac{1}{2},1]\}$ in $R(\g)$ is contained in $R_1(\g)$.

When $(a,\gamma)= (1, 1)$, we partition the 2-d region $\tilde{R}$ 
into the following four disjoint subregions, see Figure \ref{fig:regularity-same}:
\begin{align}\label{2.5}
\begin{array}{lll}
      \tilde{R}_1&:=\Big\{(\a,\b)\in\widetilde{R} \bigm| \alpha\leq \beta,2\alpha-\beta\geq \frac{1}{2}\Big\};\\
     \tilde{R}_2&:=\Big\{(\a,\b)\in\widetilde{R} \bigm| \alpha\leq \beta,0<2\alpha-\beta<\frac{1}{2}\Big\};\\
      \tilde{R}_3&:=\Big\{(\a,\b)\in \widetilde{R} \bigm| \alpha>\beta,\alpha\leq \frac{1}{2}\Big\};\\
       \tilde{R}_4&:=\Big\{(\a,\b)\in\widetilde{R} \bigm| \alpha>\beta,\a>\frac{1}{2},2\alpha-\beta<1\Big\};
\end{array}
\end{align}

\begin{figure}[h]
\centering
\begin{subfigure}{0.32\textwidth} 
\centering
\includegraphics[width=\textwidth]{./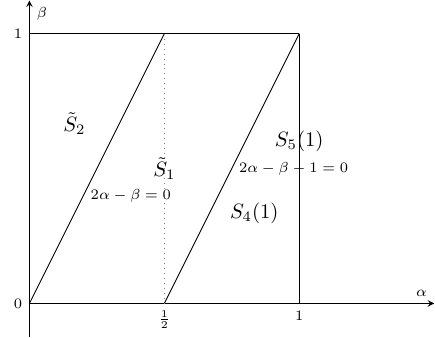} 
\caption{$\tilde{S}$}
\label{fig:stability-same-wave}
\end{subfigure}
\begin{subfigure}{0.32\textwidth} 
\centering
\includegraphics[width=\textwidth]{./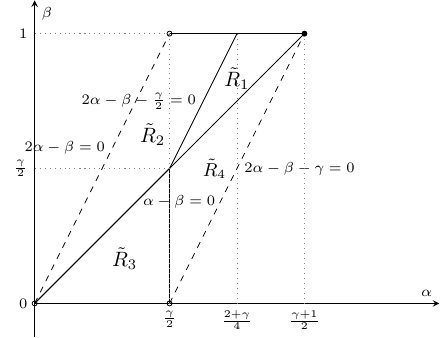} 
\caption{$\tilde{R}$}
\label{fig:regularity-same-wave}
\end{subfigure}
\begin{subfigure}{0.32\textwidth}
\centering
\includegraphics[width=\textwidth]{./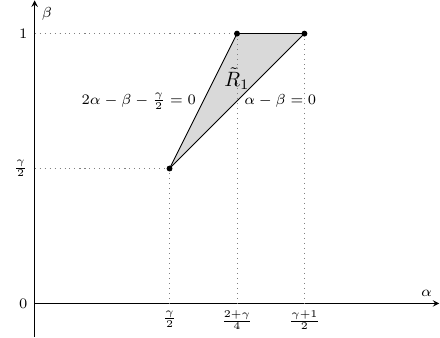}
\caption{$\tilde{R}_1$}
\label{fig:tilde-R1}
\end{subfigure}\\
\begin{subfigure}{0.32\textwidth}
\centering
\includegraphics[width=\textwidth]{./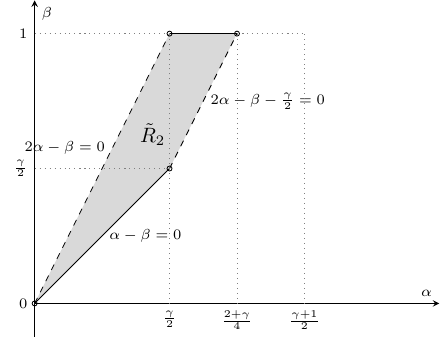}
\caption{$\tilde{R}_2$}
\end{subfigure}
\label{fig:tilde-R2}
\begin{subfigure}{0.32\textwidth}
\centering
\includegraphics[width=\textwidth]{./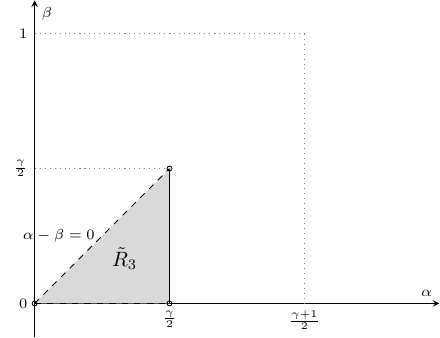}
\caption{$\tilde{R}_3$}
\end{subfigure}
\label{fig:tilde-R3}
\begin{subfigure}{0.32\textwidth}
\centering
\includegraphics[width=\textwidth]{./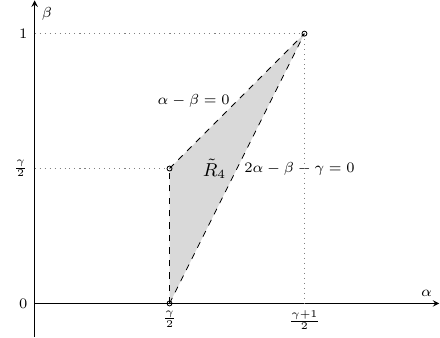}
\caption{$\tilde{R}_4$}
\label{fig:tilde-R4}
\end{subfigure}
\caption{Regions of stability (\Cref{fig:stability-same-wave}) and regularity (\Cref{fig:regularity-same-wave}-\Cref{fig:tilde-R4}) when $\gamma=1$ and the system of the two coupled equations are of the same speed ($a=1$). \Cref{fig:stability-same-wave} is from \cite{Hao-Kuang-Liu-Yong 2025}.}
\label{fig:regularity-same}
\end{figure}

\begin{remark}
\label[remark]{remark:eigenvalues}
    The above partition of the regularity region is based on the asymptotic expression of the eigenvalues of the operator $\cA_{\a,\b,\g}$, which will be presented in Section 5. This is a crucial step in our complete regularity analysis. Since the associated characteristic equation is a quartic polynomial equation, its roots can be calculated for fixed $(\a,\b,\g)$ values in terms of the eigenvalues $\mu_n$ of the operator $A$. However, for all $(\a,\b,\g)\in E$, this becomes a great challenge. We must identify the real and imaginary parts of the eigenvalues as functions of $\mu_n$ and $(\a,\b,\g)$, and the region where the form of the eigenvalues remains the same. In the current case, one of the regions even has a curved surface as its boundary, whose form is unexpected. Therefore, it is almost impossible to do this by hand computation. In response, Z. Kuang proposed a symbolic manipulation algorithm, which has been successfully applied in \cite{Deng-Han-Kuang-Zhang 2024,  Han-Kuang-Zhang 2023, Hao-Kuang-Liu-Yong 2025, Hao-Liu-Yong 2015, Kuang-Liu-Sare 2021, Kuang-Liu-Tebou  2022}, that enables  us to partition the regularity region  and obtain the asymptotic expressions of the eigenvalues in each subregion that provide candidates for the order of the resolvent operator $(i\lambda  - \cA_{\a,\b,\g})^{-1}$ on the imaginary axis. We then proceed to show that these candidates are actually the orders we are looking for by
    the frequency domain method. 
\end{remark}

Our main result for the case of different wave speeds is as follows.
\begin{theorem}\label{thm:different}
  Let $(a,\gamma)\neq (1, 1)$. The semigroup $e^{\cA_{\alpha,\beta,\gamma} t}$ is analytic in $R_1(\gamma)$ and is
   of Gevrey class $\delta>\frac{1}{\mu}$ for 
   \begin{equation*}
\mu=\left\{
\begin{aligned}
&\frac{2(2\a-\b)}{\g} & \quad \text{ in } R_2(\gamma),\\
&\frac{2(2\alpha+\beta-(\gamma\vee (2-\gamma)))}{\g} & \quad \text{ in } R_3(\gamma),\\
&\frac{\beta}{\a} & \quad \text{ in } R_4(\gamma),\\
&\frac{2(-2\a+\b+\g)}{-2\a+\g+1}&\quad \text{ in } R_5(\gamma).\\
\end{aligned}
\right.
\end{equation*} 
\end{theorem}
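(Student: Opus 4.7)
The plan is to reduce Theorem \ref{thm:different} to resolvent estimates on the imaginary axis and then invoke Lemma \ref{lemma:method}. Since exponential stability of $e^{\cA_{\alpha,\beta,\gamma}t}$ on $R(\gamma)$ was already established in \cite{Hao-Kuang-Liu-Yong 2025}, we have $i\mathbb{R}\subset \rho(\cA_{\alpha,\beta,\gamma})$, and only the asymptotic behavior of $(i\lambda I - \cA_{\alpha,\beta,\gamma})^{-1}$ as $|\lambda|\to\infty$ matters. Concretely, I would show
\begin{equation*}
\|(i\lambda I - \cA_{\alpha,\beta,\gamma})^{-1}\|_{\cL(\cH)} = O\bigl(|\lambda|^{-\mu}\bigr),\qquad |\lambda|\to\infty,
\end{equation*}
with $\mu=1$ in $R_1(\gamma)$ and with the exponents listed in the theorem for $R_2(\gamma)$--$R_5(\gamma)$. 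Lemma \ref{lemma:method} then converts these bounds into analyticity in $R_1(\gamma)$ and Gevrey class $\delta>1/\mu$ in the remaining regions.

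Each resolvent estimate will be proved by the frequency-domain contradiction method. Assume that there exist sequences $\{\lambda_n\}\subset\mathbb{R}$ with $|\lambda_n|\to\infty$ and $U_n=(u_n,v_n,y_n,w_n)^\top\in\cD_0$ with $\|U_n\|_\cH=1$ and
\begin{equation*}
F_n:=(i\lambda_n I - \cA_{\alpha,\beta,\gamma})U_n,\qquad |\lambda_n|^{\mu}\|F_n\|_\cH\to 0.
\end{equation*}
Writing out the four scalar equations
\begin{equation*}
\begin{aligned}
i\lambda_n u_n - v_n &= f_{1,n},\\
i\lambda_n v_n + aA^\gamma u_n - bA^\alpha w_n &= f_{2,n},\\
i\lambda_n y_n - w_n &= f_{3,n},\\
i\lambda_n w_n + bA^\alpha v_n + A y_n + kA^\beta w_n &= f_{4,n},
\end{aligned}
\end{equation*}
and taking the real part of $\langle F_n,U_n\rangle_\cH$, we obtain the dissipation identity $k\|A^{\beta/2}w_n\|^2=\Re\langle F_n,U_n\rangle_\cH = o(|\lambda_n|^{-\mu})$. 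The task is then to propagate this decay of $w_n$, through the coupling $bA^\alpha$, to the other three components of $U_n$ and contradict $\|U_n\|_\cH=1$.

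To execute the propagation, I would eliminate $v_n$ and $w_n$ using the first and third scalar equations, reducing to a $2\times2$ system for $(u_n,y_n)$, and then expand against the orthonormal eigenbasis $\{\varphi_k\}_{k\ge 1}$ of $A$ with eigenvalues $\{\mu_k\}$. Splitting the index $k$ into low-, medium-, and high-frequency bands whose cut-offs are dictated by the asymptotic eigenvalue expansions of $\cA_{\alpha,\beta,\gamma}$ derived in Section \ref{sec:Asymptotic}, each band will be handled by interpolating between $\|w_n\|$ and $\|A^{\beta/2}w_n\|$ and by exploiting the dominant term among $a\mu_k^\gamma-\lambda_n^2$, $b\mu_k^\alpha\lambda_n$, and $k\mu_k^\beta\lambda_n$ in the $2\times 2$ determinant. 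The exponent $\mu$ listed for each region is precisely the threshold at which no band is negligible, and the algebraic inequalities defining $R_j(\gamma)$ are exactly the conditions that force this critical balance.

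The main obstacle lies in $R_4(\gamma)$ and $R_5(\gamma)$, whose common boundary is the curved surface $\beta=(4\alpha^2-2\alpha\gamma)/(4\alpha-\gamma-1)$. This surface does not come from a homogeneous scaling of the resolvent equation; rather, it encodes a change of leading balance among the four roots of the characteristic quartic of $\cA_{\alpha,\beta,\gamma}$. Identifying the band that produces the loss of decay, and verifying that the asymmetric exponents $\mu=\beta/\alpha$ in $R_4(\gamma)$ and $\mu=2(\beta-2\alpha+\gamma)/(\gamma+1-2\alpha)$ in $R_5(\gamma)$ are actually attained, will rely on the symbolic eigenvalue analysis sketched in Remark \ref{remark:eigenvalues}; once the critical band is pinpointed, the energy estimates in $R_4$ and $R_5$ proceed along the same interpolation scheme already used in $R_2$ and $R_3$, and the sector-type argument on $i\mathbb{R}$ in $R_1$ reduces to the standard analyticity criterion once $\|(i\lambda-\cA_{\alpha,\beta,\gamma})^{-1}\|_{\cL(\cH)}=O(|\lambda|^{-1})$ is established.
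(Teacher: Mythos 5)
Your opening moves coincide with the paper's: both routes reduce the theorem to the resolvent bounds \eqref{2.6}--\eqref{2.7} via Lemma~\ref{lemma:method}, both argue by contradiction from a normalized sequence $(\l_n,U_n)$, and your dissipation identity $k\|A^{\b/2}w_n\|^2=o(|\l_n|^{-\m})$ is exactly \eqref{5.8c} of Lemma~\ref{lemma:important}. Where you diverge is in the core of the argument: you propose to eliminate $v_n,w_n$, expand the reduced $2\times2$ system in the eigenbasis of $A$, and estimate band by band in $k$, whereas the paper never diagonalizes $A$ --- it stays at the operator level, applying carefully chosen powers of $A$ as multipliers to the four resolvent equations and interpolating between quantities such as $\|A^{\b/2}w_n\|$, $\|A^{\g/2}u_n\|$ and $\|\l_n^{-1}A^{\g}u_n\|$ to drive $\|v_n\|$ and $\|A^{1/2}y_n\|$ to zero, contradicting the equipartition identity \eqref{5.8b} (which you do not derive but would also need; $\|A^{\b/2}w_n\|\to0$ alone does not contradict $\|U_n\|_{\cH}=1$). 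A modal approach is in principle viable here, since every entry of $\cA_{\a,\b,\g}$ is a function of $A$ and the spectral theorem reduces the resolvent norm to a supremum over $\sigma(A)$ of a $4\times4$ matrix norm; note, though, that your appeal to ``the orthonormal eigenbasis of $A$'' silently assumes discrete spectrum, which the abstract setting does not grant.

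The genuine gap is that the region-by-region estimates --- which occupy all of Section~\ref{sec:different} of the paper --- are asserted rather than performed. Reading the exponent $\m$ off the asymptotic eigenvalue expansions of Section~\ref{sec:Asymptotic} gives only a necessary condition (an upper bound on the achievable $\m$, which the paper uses for optimality in Theorem~\ref{thm:optimality}); it does not yield the resolvent bound, because for a non-normal modal matrix the inverse is the adjugate divided by the characteristic determinant, and one must both lower-bound $|\det|$ along $i\dbR$ uniformly in $\mu_k$ and track the growth of the adjugate entries in the weighted $\cH$-norm. That is precisely where the inequalities defining $R_1(\g)$--$R_5(\g)$, including the curved surface $\b=(4\a^2-2\a\g)/(4\a-\g-1)$, actually enter, and you give no computation showing they suffice. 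Saying that $R_4(\g)$ and $R_5(\g)$ ``proceed along the same interpolation scheme already used in $R_2$ and $R_3$'' understates the difficulty: in the paper these regions require new auxiliary variables (e.g.\ $\theta_n=aA^{\g-\a}u_n-w_n$, $\xi_n$, $\eta_n$), different multipliers, and a case split on $\b\lessgtr\frac12$, none of which is a routine transcription of the $R_2$/$R_3$ arguments. As written, your proposal is a plausible roadmap rather than a proof.
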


The following theorem presents the result for the identical wave speed case.
\begin{theorem}\label{thm:same}
  Let $(a,\gamma)= (1, 1)$. The  semigroup $e^{\cA_{\alpha,\beta,1} t}$ is analytic in $\tilde{R}_1$ and is
   of Gevrey class $\delta>\frac{1}{\mu}$ for 
   \begin{equation*}
\mu=\left\{
\begin{aligned}
&2(2\alpha-\beta) & \quad \text{ in } \tilde{R}_2,\\
&2\beta & \quad \text{ in } \tilde{R}_3,\\
&\frac{-2\alpha+\beta+1}{1-\alpha}&\quad \text{ in } \tilde{R}_4.\\
\end{aligned}
\right.
\end{equation*} 
\end{theorem}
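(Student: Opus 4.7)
The plan is to prove Theorem \ref{thm:same} by establishing a sharp resolvent estimate for $\cA_{\alpha,\beta,1}$ along the imaginary axis in each of the four subregions $\tilde{R}_1,\dots,\tilde{R}_4$, and then invoking Lemma \ref{lemma:method} to convert the estimate into the asserted analyticity or Gevrey class regularity. Concretely, for $(\alpha,\beta)\in\tilde{R}_j$ the goal is a bound
\begin{equation*}
\|(i\lambda-\cA_{\alpha,\beta,1})^{-1}\|_{\mathcal{L}(\cH)}\leq C|\lambda|^{-\mu},\qquad |\lambda|\geq\lambda_0,
\end{equation*}
with $\mu=1$ on $\tilde{R}_1$ and with $\mu$ given by the three formulas in the statement on $\tilde{R}_2,\tilde{R}_3,\tilde{R}_4$; Lemma \ref{lemma:method} then delivers analyticity on $\tilde{R}_1$ and Gevrey class $\delta>1/\mu$ on the remaining subregions.

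Each resolvent estimate will be proved by a standard contradiction/compactness argument. Assuming the bound fails on $\tilde{R}_j$, one extracts sequences $\lambda_n\in\mathbb{R}$ with $|\lambda_n|\to\infty$ and $U_n=(u_n,v_n,y_n,w_n)\in\cD(\cA_{\alpha,\beta,1})$ with $\|U_n\|_\cH=1$ such that $F_n:=(i\lambda_n-\cA_{\alpha,\beta,1})U_n$ satisfies $|\lambda_n|^{\mu}\|F_n\|_\cH\to 0$. Taking the real part of $\langle F_n,U_n\rangle_\cH$ yields the basic dissipation identity
\begin{equation*}
k\|A^{\beta/2}w_n\|^2=\Re\langle F_n,U_n\rangle_\cH\longrightarrow 0,
\end{equation*}
so the remaining work consists in propagating the smallness of $A^{\beta/2}w_n$ through the coupled system to force $\|U_n\|_\cH\to 0$, producing the contradiction. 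Substituting $v_n=i\lambda_n u_n-f_n^{(1)}$ and $w_n=i\lambda_n y_n-f_n^{(3)}$ into the second and fourth component equations reduces the analysis to a coupled pair of elliptic-type identities for $u_n$ and $y_n$. The assumption $a=\gamma=1$ is essential here: it makes the conservative symbol $\lambda^2I-aA^\gamma=\lambda^2I-A$ appear identically in both equations, which is precisely what permits the damping on $y_t$ to transfer efficiently onto $u$ via combinations such as $A^s(u_n\pm y_n)$, and explains why the exponents of Theorem \ref{thm:same} improve upon those of Theorem \ref{thm:different}.

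The main obstacle will be the region-by-region selection of the correct test multiplier that extracts the sharp exponent $\mu$. I would diagonalize in the eigenbasis of $A$ and, for each fixed eigenvalue $\mu_n$ of $A$, examine the resulting $4\times 4$ algebraic system; the regions $\tilde{R}_1$–$\tilde{R}_4$ are precisely the loci where one particular branch of the characteristic roots of the quartic dominates, and this dictates which power $A^s$ and which linear combination of $u_n$ and $y_n$ must be paired with the equation to recover the bound $O(|\lambda_n|^{-\mu})$. The exponents $2(2\alpha-\beta)$, $2\beta$ and $(-2\alpha+\beta+1)/(1-\alpha)$ are exactly those extracted from the asymptotic expansions produced by the symbolic algorithm of Section \ref{sec:Asymptotic}; that expansion both predicts the correct multiplier in each region and delivers optimality, since in every $\tilde{R}_j$ with $j\geq 2$ there is an eigenvalue branch $\lambda_n=i\sigma_n-\rho_n$ with $\rho_n\asymp\sigma_n^{1-\mu}$ as $\sigma_n\to\infty$, which forces $\|(i\sigma_n-\cA_{\alpha,\beta,1})^{-1}\|\gtrsim\sigma_n^{-\mu}$ and rules out any Gevrey index below $1/\mu$.
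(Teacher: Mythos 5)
Your overall strategy --- a resolvent estimate on the imaginary axis obtained by contradiction and converted into regularity via Lemma~\ref{lemma:method}, with the exponents and multipliers read off from the eigenvalue asymptotics of Section~\ref{sec:Asymptotic} --- is exactly the route the paper takes, and your identification of the role of $a=\gamma=1$ (the same symbol $\lambda^2-A$ in both equations lets the damping be transferred through combinations of $u_n$ and $y_n$) matches the key mechanism the paper exploits. However, the proposal has a gap that is more than cosmetic. The negation of $\limsup_{|\lambda|\to\infty}|\lambda|^{\mu}\|(i\lambda-\cA)^{-1}\|<\infty$ produces a normalized sequence with $|\lambda_n|^{-\mu}\|(i\lambda_n-\cA)U_n\|_{\cH}\to 0$ (the paper's \eqref{3.28}), not $|\lambda_n|^{\mu}\|F_n\|_{\cH}\to 0$ as you write. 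Consequently the dissipation identity yields only the weighted smallness $\|\lambda_n^{-\mu/2}A^{\beta/2}w_n\|=o(1)$ (the paper's \eqref{5.8c}), not $\|A^{\beta/2}w_n\|\to 0$. This sign slip hides the entire difficulty of the argument: with unweighted smallness one would essentially be rerunning the exponential-stability proof, whereas the real task is to propagate a damping term that is small only after division by $|\lambda_n|^{\mu/2}$, and this is precisely where the sharp value of $\mu$ enters.

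Beyond that, the proposal stops where the real work begins: you announce that the ``main obstacle'' is the region-by-region choice of multiplier, but you do not produce any multiplier or any of the interpolation inequalities needed to close the estimates. For comparison, the paper first observes that $\tilde R_1$, $\tilde R_2$ with $\beta>\frac12$, and $\tilde R_4$ reduce to $R_1(1)$, $R_2(1)$, $R_5(1)$ of Theorem~\ref{thm:different}, so only $\tilde R_2$ with $\beta\le\frac12$ and $\tilde R_3$ need new arguments; there it pairs the second and fourth resolvent equations with $A^{\alpha-\beta}w_n$ and $A^{\alpha-\beta}v_n$ (respectively $w_n$ and $v_n$), uses the identity $\lan\lambda_n^{-\mu}Au_n,A^{\alpha-\beta}w_n\ran=-\lan\lambda_n^{-\mu}A^{\alpha-\beta}v_n,Ay_n\ran+o(1)$ to cancel the conservative coupling after taking real parts, and then interpolates to conclude $\|v_n\|,\ \|A^{1/2}y_n\|\to 0$, contradicting \eqref{5.8b}. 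A complete proof must supply these steps (or equivalents); the plan as stated does not yet constitute one. As a minor further point, your optimality remark has an inverted exponent: the relevant eigenvalue branches satisfy $\rho_n\asymp\sigma_n^{\mu}$ rather than $\sigma_n^{1-\mu}$, which is what yields $\|(i\sigma_n-\cA)^{-1}\|\gtrsim\sigma_n^{-\mu}$.
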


\begin{remark}\label{7.19}
Note that the identical wave speed case yields better stability and regularity properties than the different wave speed case, due to a more efficient transfer of damping between the coupled equations. The improved stability result is reflected in the expansion of the exponentially stable region under the identical wave speed setting \cite{Hao-Kuang-Liu-Yong 2025}, while the improved regularity is observed from the following two aspects. 

First, the regularity region is enlarged. As illustrated in Figure~\ref{Comparison}, when $a = \gamma = 1 $, the region $R_3(\gamma)$ expands to include $\tilde{R}_2  (\mbox{with} \; \beta \leq \frac{1}{2})$ and  $\tilde{R}_3 $. In addition, the regions $\tilde{R}_1$, $\tilde{R}_2 (\mbox{with} \; \beta>\frac{1}{2})$ and $\tilde{R}_4$ correspond to ${R}_1(\gamma)$, $R_2(\gamma)$ and $R_5(\gamma)$, respectively. 
Second, the Gevrey class order is improved. In  $\tilde{R}_3$, where  $\alpha,\beta \leq \frac{1}{2}$, the regularity improves from Gevrey order $ \delta > \frac{1}{2(2\alpha + \beta - 1)}$ to $\delta > \frac{1}{2(2\alpha - \beta)}$ in $\tilde{R}_2$  and $\delta > \frac{1}{2\beta}$  in $\tilde{R}_3$.
\end{remark}

\begin{figure}[h]
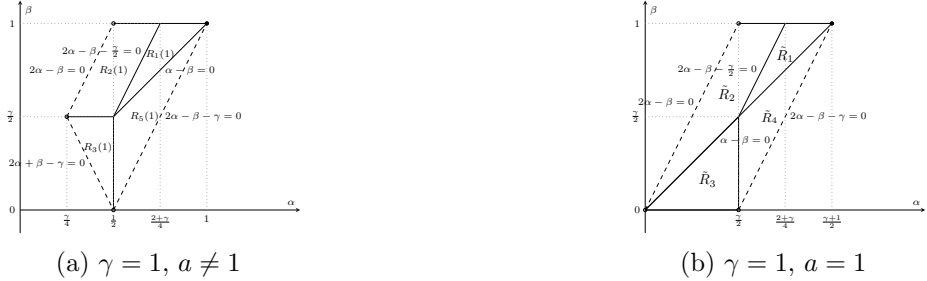

\centering
\begin{subfigure}{0.49\textwidth}
\centering
\includegraphics[width=0.5\textwidth]{./support/R-all-id-gamma-1.pdf}
\caption{$\gamma=1$, $a\ne 1$}
\end{subfigure}~
\begin{subfigure}{0.49\textwidth}
\centering
\includegraphics[width=0.5\textwidth]{./support/tilde-R-all-id-gamma-1.pdf}
\caption{$\gamma=1$, $a=1$}
\end{subfigure}~
\caption{Comparison between different wave speeds and identical wave speeds.}
\label{Comparison}
\end{figure}

At the end of this section, we recall some key lemmas that will be used in our regularity analysis.
The following lemma characterizes the spectral location and resolvent growth on the imaginary axis for infinitesimal generators of analytic and Gevrey class semigroups. For details, see \cite[Chapter 1]{Liu-Zheng 1999}, \cite{Pazy 1983} and \cite[Theorem 4]{Taylor 1989}.
\begin{lemma}\label[lemma]{lemma:method}  
Let $\cA:\cD(\cA_{\a,\b,\g})\subseteq\cH\to\cH$ generate a
$C_0$-semigroup $e^{\cA t}$ on $\cH$ such that
\begin{align*}
    \|e^{\cA t}\|\le M,\qq t\ge0, 
\end{align*}
for some $M\ge1$ and
\begin{align*}
    i\l\in\rho(\cA),\qq\l\in\dbR.
\end{align*}
Then the following hold:

\ms

{\rm(i)} Semigroup $e^{\cA t}$ is analytic if and only if for some
$a\in\dbR$ and $b,C>0$ such that
\begin{align*}
    \rho(\cA)\supseteq\Si(a,b){:=}\Big\{\l\in\dbC\bigm|\Re\l>a-b|\Im\l|\Big\},
\end{align*}
and
\begin{align*}
    \|(i\l-\cA)^{-1}\|\le{C\over1+|\l|},\qq\l\in\Si(a,b).
\end{align*}
This is the case if and only if
\bel{2.6}\limsup_{\l\in\dbR,\,|\l|\to\infty}|\l|\,\|(i\l-\cA)^{-1}\|<\infty.\ee

{\rm(ii)} Semigroup $e^{\cA t}$ is of {\it Gevrey} class $\d>1$ if
and only if for any $b,\t>0$, there are constants $a\in\dbR$ and
$C>0$ depending on $b,\t,\d$ such that
\begin{align*}
    \rho(\cA)\supseteq\Si_b(\d):=\Big\{\l\in\dbC\bigm|\Re\l>a-b|\Im\l|^{1\over\d}\Big\},
\end{align*}
and
\begin{align*}
    \|(i\l-\cA)^{-1}\|\le
C\(e^{-\t\Re\l}+1\),\qq\l\in\Si_b(\d).
\end{align*}
This is the case, in particular, if for some $\m\in(\d^{-1},1)$,
\bel{2.7}\limsup_{\l\in\dbR,\,|\l|\to\infty}|\l|^\m\|(i\l-\cA)^{-1}\|<\infty.
\ee

{\rm(iii)} Semigroup $e^{\cA t}$ is differentiable if and only if
for any $b>0$, there are constants $a_b\in\dbR$ and $C_b>0$ such
that
\begin{align*}
\rho(\cA)\supseteq\Si_b:=\Big\{\l\in\dbC\bigm|\Re\l>a_b-b\log|\Im\l|\Big\},\end{align*}
and
\begin{align*}
\|(i\l-\cA)^{-1}\|\le
C_b|\Im\l|,\qq\forall\l\in\Si_b,~\Re\l\le0.\end{align*}
This is the case, in particular, if
\bel{2.8}\limsup_{\l\in\dbR,\,|\l|\to\infty}\log|\l|\|(i\l-\cA)^{-1}\|=0.\ee

\end{lemma}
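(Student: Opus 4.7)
The plan is to prove each of the three parts by the same two-step scheme: first extend the resolvent from the imaginary axis into an appropriate region $\Omega$ straddling $i\dbR$ via a Neumann series, then recover the desired smoothness of $e^{\cA t}$ from a Dunford--Taylor contour integral over $\partial\Omega$. The ``only if'' directions follow from the Hille--Phillips calculus: an analytic (respectively Gevrey, differentiable) semigroup with $i\dbR\subseteq\rho(\cA)$ admits an analytic extension into a region off $(0,\infty)$, and Cauchy's formula applied to $T^{(n)}$ together with the Laplace representation $(\lambda-\cA)^{-1}=\int_0^\infty e^{-\lambda t}e^{\cA t}\,dt$ on rotated rays yield the stated sector containments and resolvent bounds. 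I will concentrate on the ``if'' directions and the sufficient conditions \eqref{2.6}--\eqref{2.8}, since these are what the paper actually invokes later.

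The central device for the ``if'' direction is the Neumann series
\begin{equation*}
(\lambda-\cA)^{-1} = (i\sigma-\cA)^{-1}\sum_{k=0}^\infty\bigl[(i\sigma-\lambda)(i\sigma-\cA)^{-1}\bigr]^k,
\end{equation*}
convergent as soon as $|\lambda-i\sigma|<\|(i\sigma-\cA)^{-1}\|^{-1}$. Under an imaginary-axis estimate $\|(i\sigma-\cA)^{-1}\|\le C|\sigma|^{-\mu}$ with $\mu\in(0,1]$, this extends the resolvent analytically to a region of the form $\Omega_\mu:=\{\lambda\in\dbC : \Re\lambda > -c|\Im\lambda|^{\mu}\}$ (for $|\Im\lambda|$ large, supplemented by the right half-plane, where boundedness of the resolvent is automatic from $\|e^{\cA t}\|\le M$), with the bound $\|(\lambda-\cA)^{-1}\|\le C'|\Im\lambda|^{-\mu}$. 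For $\mu=1$ this is precisely the sector $\Sigma(a,b)$ and the $O(1/|\lambda|)$ bound of part (i); for $\mu\in(0,1)$ one obtains the region $\Sigma_b(\delta)$ of part (ii); the logarithmic region $\Sigma_b$ of part (iii) arises from the same argument with the logarithmic growth rate $\|(i\sigma-\cA)^{-1}\|=o(1/\log|\sigma|)$.

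Smoothness of the semigroup is then read off from the representation
\begin{equation*}
\cA^n e^{\cA t} = \frac{1}{2\pi i}\int_\Gamma \lambda^n e^{\lambda t}(\lambda-\cA)^{-1}\,d\lambda,\qquad \Gamma=\partial\Omega_\mu,
\end{equation*}
justified by contour deformation from a right half-plane Bromwich contour. For part (i) this gives $\|\cA e^{\cA t}\|\le C/t$, characterizing analyticity. For parts (ii)--(iii) one combines the resolvent bound on $\Gamma$ with the exponential factor $|e^{\lambda t}|\le e^{-ct|\Im\lambda|^{\mu}}$ along $\Gamma$, reducing the estimate to the elementary integral $\int_0^\infty s^{n-\mu}e^{-cts^\mu}\,ds$, which by Stirling is of order $r^n (n!)^{1/\mu}$ for any fixed $t>0$ and $r$ arbitrarily small as $t$ stays in a compact set.

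The main obstacle is the Gevrey bookkeeping in part (ii): the integral above delivers Gevrey class exactly $1/\mu$, whereas the conclusion allows any $\delta>1/\mu$. The remedy is to exploit the freedom in the Neumann-series exponent. For any $\mu'\in(1/\delta,\mu)$, the hypothesis \eqref{2.7} still implies $\|(i\sigma-\cA)^{-1}\|\le C_{\mu'}|\sigma|^{-\mu'}$ for $|\sigma|$ large, so running the whole argument with $\mu'$ in place of $\mu$ yields the bound \eqref{2.3} with index $1/\mu'<\delta$, hence in particular with index $\delta$. The dependence $C(\mathcal{K},r)$ on the compact set $\mathcal{K}$ and on $r>0$ is absorbed by tuning the constants $b$ and $\tau$ in the enlarged region $\Sigma_b(\delta)$ so that the factor $e^{-\tau\Re\lambda}+1$ on $\Gamma$ dominates the polynomial growth $|\lambda|^n$ to the desired order.
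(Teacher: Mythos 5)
The paper does not prove this lemma: it is recalled from the literature, with the reader referred to \cite[Chapter 1]{Liu-Zheng 1999}, \cite{Pazy 1983} and \cite[Theorem 4]{Taylor 1989}, so there is no in-paper argument to compare against. Your sketch is, in substance, the standard proof from those references: a Neumann-series continuation of the resolvent off $i\dbR$ into a sector, a region $\{\Re\l>a-b|\Im\l|^{1/\d}\}$, or a logarithmic region according to the decay rate of $\|(i\l-\cA)^{-1}\|$ on the imaginary axis, followed by a Dunford--Taylor contour representation of $\cA^n e^{\cA t}$ and a Stirling estimate, with the ``only if'' halves obtained from the Laplace-transform representation of the resolvent and Cauchy's formula. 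The one imprecision worth fixing is in the Gevrey bookkeeping of part (ii): the contour integral with exponent $\m$ gives $\|\cA^n e^{\cA t}\|\le C\,R^n (n!)^{1/\m}$ with $R=R(t)$ bounded \emph{away from zero} on compact $t$-sets, so it does not deliver Gevrey class exactly $1/\m$ in the sense of \eqref{2.3}, which demands the bound for \emph{every} $r>0$; what it delivers directly is class $\d$ for every $\d>1/\m$, since $(R/r)^n\le C'(n!)^{\d-1/\m}$ for any fixed $r>0$. Your detour through an auxiliary exponent $\m'\in(\d^{-1},\m)$ reaches the same conclusion and is harmless, merely unnecessary. Nothing in the outline would fail, and it is consistent with the cited sources.
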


We also recall the interpolation technique that will be used frequently in later sections.
\begin{lemma}
    Let $A:\cD(A)\subseteq H$ be self-adjoint and
positive definite. Then
\begin{align*}
    \|A^px\|\le\|A^qx\|^{p-r\over
q-r}\|A^rx\|^{q-p\over q-r},\qq 0\le r\le p\le
q,~x\in\cD(A^q).
\end{align*}
\end{lemma}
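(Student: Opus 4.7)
The plan is to reduce the operator inequality to a pointwise inequality on the spectrum via the spectral theorem, and then apply Hölder's inequality. Since $A$ is self-adjoint and positive definite, it admits a spectral resolution $A=\int_{0}^{\infty}\lambda\,dE_{\lambda}$, so that for every $s\ge 0$ and every $x\in\cD(A^{s})$ one has
\[
\|A^{s}x\|^{2}=\int_{0}^{\infty}\lambda^{2s}\,d\|E_{\lambda}x\|^{2}.
\]
Thus the claim will follow once the corresponding inequality is proved for the scalar measure $d\mu_{x}(\lambda):=d\|E_{\lambda}x\|^{2}$.

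Next I would perform the simple algebraic decomposition of the exponent: writing $\theta=(p-r)/(q-r)\in[0,1]$, the identity
\[
2p=2r\,(1-\theta)+2q\,\theta
\]
gives $\lambda^{2p}=\bigl(\lambda^{2r}\bigr)^{1-\theta}\bigl(\lambda^{2q}\bigr)^{\theta}$ for every $\lambda\ge 0$. Applying Hölder's inequality on $(0,\infty)$ with conjugate exponents $1/(1-\theta)$ and $1/\theta$ then yields
\[
\int_{0}^{\infty}\lambda^{2p}\,d\mu_{x}(\lambda)\le\(\int_{0}^{\infty}\lambda^{2r}\,d\mu_{x}(\lambda)\)^{1-\theta}\(\int_{0}^{\infty}\lambda^{2q}\,d\mu_{x}(\lambda)\)^{\theta},
\]
which in operator form reads $\|A^{p}x\|^{2}\le\|A^{r}x\|^{2(1-\theta)}\|A^{q}x\|^{2\theta}$. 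Taking square roots and substituting back $\theta=(p-r)/(q-r)$ gives exactly the stated inequality.

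There is essentially no obstacle here: the only things to check are that all the integrals above are finite for $x\in\cD(A^{q})$ (the hypothesis on $r\le p\le q$ together with the observation $\lambda^{2p}\le 1+\lambda^{2q}$ takes care of the intermediate power), and that the boundary cases $p=r$ or $p=q$ (where $\theta=0$ or $1$) reduce to trivial identities $\|A^{p}x\|=\|A^{p}x\|$, so no separate argument is needed. The two endpoint cases $r=0$ and $x\notin\cD(A^{q})$ on the right are also handled automatically: for $r=0$ the measure integrates to $\|x\|^{2}$, and if $\|A^{q}x\|=\infty$ the inequality is vacuous.
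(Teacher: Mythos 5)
Your proof is correct: the reduction to a scalar inequality via the spectral measure $d\|E_\lambda x\|^2$, followed by H\"older's inequality with exponents $1/(1-\theta)$ and $1/\theta$ for $\theta=(p-r)/(q-r)$, is the standard argument for this moment inequality, and your exponent bookkeeping and treatment of the endpoint cases are all in order. The paper itself states this lemma without proof (it is a classical interpolation fact for fractional powers of a self-adjoint positive definite operator), so there is nothing to compare against; your write-up supplies exactly the argument one would expect.
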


\section{Different  wave speeds}\label{sec:different}
\setcounter{equation}{0}
\setcounter{theorem}{0}
\rm 
This section is devoted to the proof of Theorem \ref{thm:different}. We shall use a contradiction argument to estimate the order $\mu$ of the resolvent operator on the imaginary axis. For simplicity of presentation, we will take $b=k=1$ throughout the rest of the paper.

\begin{lemma}\label{lemma:important}
Given $(\a,\b,\g)\in R(\g)$ and $\m\in (0,1]$. Suppose that the following is not true:
$$\limsup_{\l\in\dbR,\,|\l|\to\infty}|\l|^{\m}\|(i\l-\cA_{\a,\b,\g})^{-1}\|
<\infty.$$ Then there exists a sequence
$(U_n:=(u_n,v_n,y_n,w_n)^T)_{n\ge1}\subseteq \cD_0$ with
\begin{align}\label{5.7}
    \begin{array}{lll}
        &&\lim_{n\to\infty}|\l_n|=\infty,  \\
       \noalign{\medskip}   
         && \|U_n\|_\cH^2=a\|A^{\g\over2}u_n\|^2 + \|v_n\|^2 + \|A^{1\over2}y_n\|^2 + \|w_n\|^2=1,\q
n\ge1.
    \end{array}
\end{align} 
Moreover, 
\begin{align}
    \label{5.8a}
	& a\|A^{\frac{\gamma}{2}}u_n\|^2+\|w_n\|^2=\frac{1}{2}+o(1).\\
	\label{5.8b}
	&\|A^{\frac{1}{2}}y_n\|^2+\|v_n\|^2=\frac{1}{2}+o(1).\\
    \label{5.8c}
    &\|\lambda_n^{-\frac{\mu}{2}}A^{\frac{\beta}{2}}w_n\|=o(1).\\
    \label{5.8d}
    &A^{\frac{\gamma}{2}}u_n=-i\lambda_n^{-1}A^{\frac{\gamma}{2}}v_n+o(1).\\
    \label{5.8e}
&A^{\frac{1}{2}}y_n=-i\lambda_n^{-1}A^{\frac{1}{2}}w_n+o(1).
\end{align}
\end{lemma}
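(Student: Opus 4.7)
The plan is a standard contradiction-plus-normalization scheme. Negating the hypothesis gives a sequence $\lambda_n\in\dbR$ with $|\lambda_n|\to\infty$ and $F_n\in\cH$, $\|F_n\|_\cH=1$, such that $|\lambda_n|^\mu\|(i\lambda_n-\cA_{\a,\b,\g})^{-1}F_n\|_\cH\to\infty$. Setting $V_n:=(i\lambda_n-\cA_{\a,\b,\g})^{-1}F_n$ and $U_n:=V_n/\|V_n\|_\cH$, a standard density argument using the core $\cD_0$ produces $U_n=(u_n,v_n,y_n,w_n)^\top\in\cD_0$ with $\|U_n\|_\cH=1$ and
$$G_n:=(i\lambda_n-\cA_{\a,\b,\g})U_n=(g_{1,n},g_{2,n},g_{3,n},g_{4,n})^\top,\q \|G_n\|_\cH=o(|\lambda_n|^{-\mu}).$$
This establishes \eqref{5.7} and yields the four component equations $i\lambda_nu_n-v_n=g_{1,n}$, $i\lambda_nv_n+aA^\g u_n-A^\a w_n=g_{2,n}$, $i\lambda_ny_n-w_n=g_{3,n}$, and $i\lambda_nw_n+Ay_n+A^\a v_n+A^\b w_n=g_{4,n}$, each $g_{j,n}$ being small in the appropriate norm.

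Next, I derive \eqref{5.8c} from dissipativity. Expanding $\langle\cA_{\a,\b,\g}U_n,U_n\rangle_\cH$ componentwise and using the self-adjointness of $A$, the $aA^\g$, $A^\a$, and $A$ couplings produce only purely imaginary contributions, so $\Re\langle\cA_{\a,\b,\g}U_n,U_n\rangle_\cH=-\|A^{\b/2}w_n\|^2$. Consequently $\|A^{\b/2}w_n\|^2=\Re\langle G_n,U_n\rangle_\cH=o(|\lambda_n|^{-\mu})$, proving \eqref{5.8c}. For \eqref{5.8d} and \eqref{5.8e}, I apply $A^{\g/2}$ to the first equation and $A^{1/2}$ to the third, then divide by $i\lambda_n$; since $\|A^{\g/2}g_{1,n}\|\le\|g_{1,n}\|_{\cD(A^{\g/2})}=o(|\lambda_n|^{-\mu})$ and $\mu\le1$, the remainder is $o(1)$.

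For \eqref{5.8a} and \eqref{5.8b}, I take the $H$-inner product of the second and fourth equations with $v_n$ and $w_n$, respectively, and extract imaginary parts. Substituting $v_n=i\lambda_nu_n-g_{1,n}$ converts $a\,\Im\langle A^\g u_n,v_n\rangle$ into $-a\lambda_n\|A^{\g/2}u_n\|^2+o(1)$, and $w_n=i\lambda_ny_n-g_{3,n}$ converts $\Im\langle Ay_n,w_n\rangle$ into $-\lambda_n\|A^{1/2}y_n\|^2+o(1)$. Dividing by $\lambda_n$ then gives
$$\|v_n\|^2-a\|A^{\g/2}u_n\|^2=o(1),\q \|w_n\|^2-\|A^{1/2}y_n\|^2=o(1),$$
which, combined with $\|U_n\|_\cH^2=1$, yields \eqref{5.8a} and \eqref{5.8b}.

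The main obstacle will be verifying that the cross term $\lambda_n^{-1}\Im\langle A^\a w_n,v_n\rangle$ (and its conjugate appearing in the fourth-equation computation) is $o(1)$. Because $(\a,\b,\g)\in R(\g)$ forces $0\le 2\a-\b\le\g$, in particular $\a\ge\b/2$, I can split $\langle A^\a w_n,v_n\rangle=\langle A^{\b/2}w_n,A^{\a-\b/2}v_n\rangle$, which is valid since $U_n\in\cD_0$ gives $v_n\in\cD(A^{\a\vee(\g/2)})\subseteq\cD(A^{\a-\b/2})$. Using $v_n=i\lambda_nu_n-g_{1,n}$ together with the interpolation lemma and $2\a-\b\le\g$ yields $\|A^{\a-\b/2}u_n\|\le C\|A^{\g/2}u_n\|^{(2\a-\b)/\g}\|u_n\|^{1-(2\a-\b)/\g}=O(1)$, and hence $\|A^{\a-\b/2}v_n\|=O(\lambda_n)$. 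Combined with $\|A^{\b/2}w_n\|=o(|\lambda_n|^{-\mu/2})$ from \eqref{5.8c}, this gives $\lambda_n^{-1}|\langle A^\a w_n,v_n\rangle|=o(|\lambda_n|^{-\mu/2})=o(1)$, as required; the conjugate term is handled symmetrically.
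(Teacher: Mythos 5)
Your proposal is correct, and its skeleton (contradiction/normalization, dissipativity for \eqref{5.8c}, the first and third component equations for \eqref{5.8d}--\eqref{5.8e}, and energy-splitting inner products for \eqref{5.8a}--\eqref{5.8b}) matches the paper's. The one place where you genuinely diverge is the step you yourself flag as the main obstacle: the coupling term $\lambda_n^{-1}\Im\lan A^{\a}w_n,v_n\ran$. The paper never estimates this term at all. It forms the four inner-product identities \eqref{5.81}--\eqref{5.84}, takes the complex conjugates of \eqref{5.82} and \eqref{5.83} multiplied by $-1$, and adds; in the combination \eqref{5.81}$+$\eqref{5.84}$-$\eqref{5.86}$-$\eqref{5.87} every cross term ($a\lan A^{\g}v_n,u_n\ran$, $\lan A^{\a}v_n,w_n\ran$, $\lan Ay_n,w_n\ran$) cancels identically, leaving only $\lambda_n^{-\mu}\|A^{\b/2}w_n\|^2$, which is absorbed by \eqref{5.8c}; the conclusion then follows from $\mu\le1$. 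Your route instead bounds the cross term directly via the splitting $\lan A^{\b/2}w_n,A^{\a-\b/2}v_n\ran$, the relation $v_n=i\lambda_nu_n+o(\cdot)$, interpolation under $0<2\a-\b\le\g$, and \eqref{5.8c}, yielding $o(|\lambda_n|^{-\mu/2})$ after division by $\lambda_n$. Both arguments are sound: the paper's conjugation trick is cleaner and works for arbitrary $(\a,\b,\g)$ without invoking the geometry of $R(\g)$, whereas your estimate uses the hypothesis $(\a,\b,\g)\in R(\g)$ (legitimately, since the lemma assumes it) and produces as a by-product the explicit bound $\lambda_n^{-1}\lan A^{\a}w_n,v_n\ran=o(1)$, a quantity that the subsequent propositions in Section 3 in fact re-derive region by region.
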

\begin{proof}
By the assumption, there exists a sequence
$\{(\l_n,U_n)\bigm|n\ge1\}\subseteq\dbR\times\cD_0$ with
$U_n:=(u_n,v_n,y_n,w_n)^T$ and \eqref{5.7} holds.
Moreover, we have
\bel{3.28}\lim_{n\to\infty}|\l_n|^{-\m}\|(i\l_n-\cA_{\a,\b,\g})
U_n\|_\cH=0,\ee
i.e.,
\begin{subequations}
	\bel{3.30a}\3n\3n\3n\3n\3n\3n\3n\3n\3n\3n
	i\l_n^{-\m+1}A^{\g\over2}u_n-\l_n^{-\m}A^{\g\over 2}v_n=o(1),\ee
	\bel{3.30b}\3n
	i\l_n^{-\m+1}v_n+\l_n^{-\m}aA^\g u_n-\l_n^{-\m}A^{\a}w_n=o(1),\ee
	\bel{3.30c}\3n\3n\3n\3n\3n\3n\3n\3n\3n\3n
	i\l_n^{-\m+1}A^{1\over2}y_n-\l_n^{-\m}A^{1\over 2}w_n=o(1),\ee
	\bel{3.30d}
	i\l_n^{-\m+1}w_n + \l_n^{-\m}A y_n + \l_n^{-\m}A^\a v_n + \l_n^{-\m}A^\b w_n=o(1).\ee
\end{subequations}

Note that \eqref{5.8d} and \eqref{5.8e} are clear from \eqref{3.30a} and \eqref{3.30c}, respectively.     Taking the inner product of \eqref{3.30a} with $aA^{\frac{\gamma}{2}}u_n$, \eqref{3.30b} with $v_n$, \eqref{3.30c} with $A^{\frac{1}{2}}y_n$, and \eqref{3.30d} with $w_n$ gives
     \begin{align}
         &ia\lambda_n^{-\mu+1}\|A^{\frac{\gamma}{2}}u_n\|^2-a\l_n^{-\mu}\lan  A^{\gamma}v_n, u_n\ran=o(1).\label{5.81}\\
        &i\lambda_n^{-\mu+1}\|v_n\|^2+a\l_n^{-\mu}\lan  A^{\gamma}u_n, v_n\ran- \l_n^{-\mu} \lan A^{\alpha}w_n, v_n\ran=o(1).\label{5.82}\\
         &i\lambda_n^{-\mu+1}\|A^{\frac{1}{2}}y_n\|^2-\l_n^{-\mu} \lan Aw_n, y_n\ran=o(1).\label{5.83}\\
         &i\lambda_n^{-\mu+1}\|w_n\|^2+ \l_n^{-\mu}\lan Ay_n, w_n\ran+ \l_n^{-\mu} \lan A^{\alpha}v_n, w_n\ran+\lambda_n^{-\mu}\|A^{\frac{\beta}{2}}w_n\|^2=o(1).\label{5.84}
     \end{align}
    Adding \eqref{5.81}-\eqref{5.84} yields
       \begin{align}\label{5.85}
       \begin{array}{lll}
        &\lambda_n^{-\mu}\|A^{\frac{\beta}{2}}w_n\|^2+i\lambda_n^{-\mu+1}(a\|A^{\frac{\gamma}{2}}u_n\|^2+\|v_n\|^2+\|A^{\frac{1}{2}}y_n\|^2+\|w_n\|^2)\\  \noalign{\medskip}   
     & -2i\lambda_n^{-\mu}(a \Im\lan  A^{\gamma}v_n,u_n\ran+\Im \lan A^{\alpha}v_n, w_n\ran+\Im \lan Ay_n, w_n\ran)=o(1).
       \end{array}    
     \end{align}
    Then \eqref{5.8c} follows by taking the real part of \eqref{5.85}. 
    On the other hand, taking conjugate of \eqref{5.82} and \eqref{5.83}, then multiplying both by (-1) respectively, we get
    \begin{align}
        &&i\lambda_n^{-\mu+1}\|v_n\|^2-a\l_n^{-\mu}\lan  A^{\gamma}v_n, u_n\ran+ \l_n^{-\mu} \lan A^{\alpha}v_n, w_n\ran=o(1).\label{5.86}\\
         &&i\lambda_n^{-\mu+1}\|A^{\frac{1}{2}}y_n\|^2+\l_n^{-\mu} \lan Ay_n, w_n\ran=o(1).\label{5.87}
    \end{align}
    Then combining \eqref{5.8c}, \eqref{5.81}, \eqref{5.84}, \eqref{5.86} and \eqref{5.87}, we obtain
    \begin{align}\label{5.88}
        i\lambda_n^{-\mu+1}(\|A^{\frac{\gamma}{2}}u_n\|^2+\|w_n\|^2-\|v_n\|^2-\|A^{\frac{1}{2}}y_n\|^2)=o(1).
    \end{align}
    Therefore, \eqref{5.8a} and \eqref{5.8b} follow from \eqref{5.7}, \eqref{5.88} and the fact $\mu\leq 1$. The proof is complete.
\end{proof}

To facilitate the proof, we decompose Theorem~\ref{thm:different} into five propositions, presented as Proposition~\ref{main1} and Propositions \ref{main2}–\ref{main5}. We next establish them one by one.
\begin{proposition}\label{main1}
  The  semigroup $e^{\cA t}$ is analytic in 
 $$R_1(\gamma)=\Big\{(\a,\b,\g)\in R(\g) \bigm|4\alpha-2\beta-\gamma \ge 0, \; 2\alpha-2\beta+((1-\gamma)\vee0)\le 0 \Big\},\;\;  {1\over2}\le\gamma \le 2.$$
\end{proposition}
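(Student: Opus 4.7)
My plan is to argue by contradiction through Lemma~\ref{lemma:method}(i), which characterizes analyticity by the resolvent bound $\limsup_{|\l|\to\infty}|\l|\,\|(i\l-\cA_{\a,\b,\g})^{-1}\|<\infty$. If this bound fails, Lemma~\ref{lemma:important} applied with $\mu=1$ produces a sequence $\{(\l_n,U_n)\}_{n\ge 1}\subseteq\dbR\times\cD_0$ satisfying $|\l_n|\to\infty$, $\|U_n\|_\cH=1$, the asymptotic identities \eqref{5.8a}--\eqref{5.8e}, and the scaled equations \eqref{3.30a}--\eqref{3.30d} with $\mu=1$. The goal is then to force $\|U_n\|_\cH\to 0$, contradicting $\|U_n\|_\cH=1$.

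The starting point is the damping estimate $\|\l_n^{-1/2}A^{\b/2}w_n\|=o(1)$ from \eqref{5.8c}. Combined with $\|w_n\|\le 1$, the interpolation lemma gives $\|A^{s}w_n\|=o(|\l_n|^{s/\b})$ for every $s\in[0,\b/2]$, and, after the coupling is used, analogous intermediate estimates for $v_n$. I would then test the coupling equations \eqref{3.30b} and \eqref{3.30d} against carefully chosen powers $A^{\t}w_n$ and $A^{\t}v_n$, using the position--velocity relations \eqref{5.8d}, \eqref{5.8e} to eliminate $u_n$ and $y_n$ in favor of $v_n$ and $w_n$. The defining inequality $4\a-2\b-\g\ge 0$, i.e., $2\a\ge\b+\g/2$, enters here: it forces the $A$-powers appearing in the coupling terms $A^{\a}w_n$ and $A^{\a}v_n$ to lie in the range in which interpolation against \eqref{5.8c} absorbs them without losing a power of $|\l_n|$.

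The closing step is to combine these bounds with \eqref{5.8d} and \eqref{5.8e} to conclude $\|A^{\g/2}u_n\|=o(1)$ and $\|w_n\|=o(1)$, which contradicts \eqref{5.8a}: $a\|A^{\g/2}u_n\|^2+\|w_n\|^2=\tfrac{1}{2}+o(1)$. The second defining inequality $2\a-2\b+((1-\g)\vee 0)\le 0$ is used precisely in this closure step to control the ``return loop'' from the low-damped components $(v_n,A^{1/2}y_n)$ back to $(A^{\g/2}u_n,w_n)$, so that the transfer of dissipation across the two second-order equations remains lossless at rate $\mu=1$.

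The main obstacle, I expect, will be locating the correct combination of test functions and intermediate interpolation exponents so that the two inequalities defining $R_1(\g)$ emerge naturally and the chain of estimates closes at $\mu=1$. Heuristically, $R_1(\g)$ is the sharp region in which the $A^{\b}$-damping of $w_n$ can be transferred losslessly, through the $A^{\a}$-coupling, to all four components of $U_n$; outside $R_1(\g)$ the transfer incurs a factor $|\l_n|^{1-\mu}$ with $\mu<1$, which is exactly why only Gevrey-class regularity is achievable in the adjacent regions $R_2(\g)$--$R_5(\g)$ treated in Propositions \ref{main2}--\ref{main5}.
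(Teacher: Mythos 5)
Your high-level skeleton --- contradiction via Lemma~\ref{lemma:method}(i), then Lemma~\ref{lemma:important} with $\mu=1$, then a chain of energy estimates driven by \eqref{5.8c} --- is exactly the paper's, but the substance of the chain is missing, and the two mechanisms you do commit to would not work as stated. The central gap is your claim that interpolation, seeded by \eqref{5.8c}, yields the ``analogous intermediate estimates for $v_n$.'' The only a priori information on $v_n$ is $\|v_n\|=O(1)$ and $\|\lambda_n^{-1}A^{\gamma/2}v_n\|=O(1)$ (from \eqref{5.8d}), and interpolating these gives $\|A^{\alpha-\beta/2}v_n\|=O(|\lambda_n|^{(2\alpha-\beta)/\gamma})$. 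Since the first defining inequality of $R_1(\gamma)$ says precisely $(2\alpha-\beta)/\gamma\ge\frac12$, this is \emph{weaker} than the bound $\|\lambda_n^{-1/2}A^{\alpha-\beta/2}v_n\|=O(1)$ that is needed to absorb the coupling term $\langle\lambda_n^{-1}A^{\alpha}w_n,v_n\rangle$ against \eqref{5.8c}; interpolation fails exactly in the interior of $R_1(\gamma)$. The paper gets that bound by a structural cancellation you do not identify: because $\alpha\le\beta$ in $R_1(\gamma)$, applying $A^{\alpha-\beta}$ to \eqref{3.30d} and adding \eqref{3.30b} cancels the two $\lambda_n^{-1}A^{\alpha}w_n$ coupling terms, giving \eqref{3.40}, and pairing \eqref{3.40} with $v_n$ produces $\|\lambda_n^{-1/2}A^{\alpha-\beta/2}v_n\|^2$ as the quadratic term of an identity whose remaining terms are bounded (this is where $2\alpha-2\beta+((1-\gamma)\vee0)\le0$ is spent, in \eqref{6.18}; the condition $4\alpha-2\beta-\gamma\ge0$ is spent later, as $\gamma-2\alpha+\beta\le\gamma/2$, when \eqref{3.40} is paired with $A^{\gamma-2\alpha+\beta}u_n$ to prove \eqref{5.74} --- not in absorbing $A^{\alpha}w_n$ as you suggest).

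Two further points. You cannot ``eliminate $u_n$ and $y_n$ in favor of $v_n,w_n$ via \eqref{5.8d}--\eqref{5.8e}'': those identities control only $A^{\gamma/2}u_n$ and $A^{1/2}y_n$, while the equations contain $A^{\gamma}u_n$ and $Ay_n$, so the argument needs the separate a priori bounds $\|\lambda_n^{-1}A^{3/2-\beta}y_n\|=O(1)$, $\|\lambda_n^{-1}A^{\gamma+1/2-\alpha}u_n\|=O(1)$ and $\|\lambda_n^{-1/2}A^{\gamma-\alpha+\beta/2}u_n\|=O(1)$, each obtained by testing against powers of $y_n$ or $u_n$ themselves. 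And your closure aims at the wrong pair: the dissipation chain kills $A^{\beta/2}w_n$, then $A^{1/2}y_n$, then $w_n$ and finally $v_n$, so the contradiction is with \eqref{5.8b}; contradicting \eqref{5.8a} as you propose would require $\|A^{\gamma/2}u_n\|\to0$, which is the one component the argument never controls (it would demand decay of $\lambda_n^{-1}A^{\gamma/2}v_n$, a higher-order statement than $\|v_n\|\to0$).
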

\begin{figure}[t]
\centering
\begin{subfigure}{0.19\textwidth}
\centering
\includegraphics[width=\textwidth]{./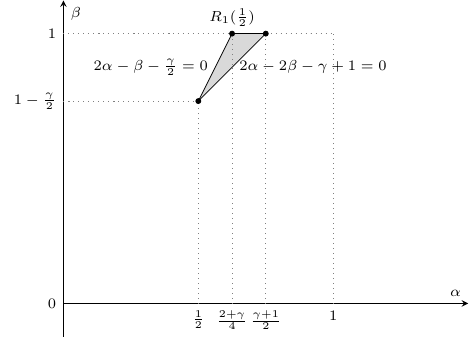}
\caption{$\gamma=\frac{1}{2}$}
\end{subfigure}~
\begin{subfigure}{0.19\textwidth}
\centering
\includegraphics[width=\textwidth]{./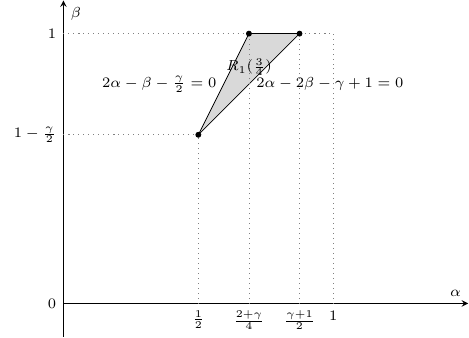}
\caption{$\gamma=\frac{3}{4}$}
\end{subfigure}
\centering
\begin{subfigure}{0.19\textwidth}
\centering
\includegraphics[width=\textwidth]{./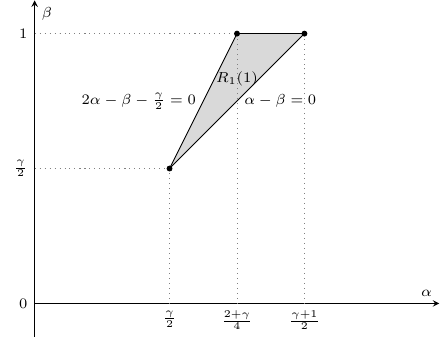}
\caption{$\gamma=1$, $a\ne 1$}
\end{subfigure}~
\begin{subfigure}{0.19\textwidth}
\centering
\includegraphics[width=\textwidth]{./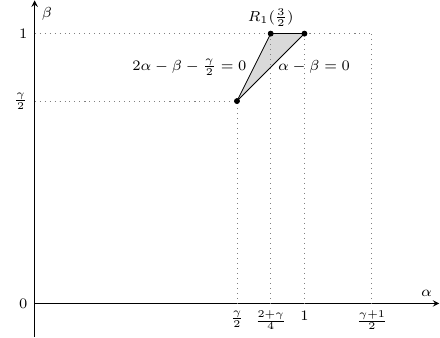}
\caption{$\gamma=\frac{3}{2}$}
\end{subfigure}~
\begin{subfigure}{0.19\textwidth}
\centering
\includegraphics[width=\textwidth]{./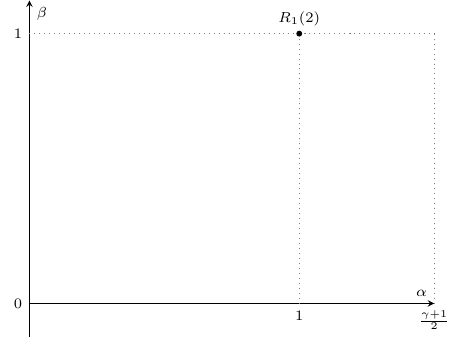}
\caption{$\gamma=2$}
\end{subfigure}
\caption{Visualization of $R_1(\gamma)$ when $\gamma=$  $\frac{1}{2},\frac{3}{4},1,\frac{3}{2}$, and $2$.}
\label{fig:r1}
\end{figure}

\begin{proof}
   By Lemma \ref{lemma:method}, it suffices to prove \eqref{2.6}. Let $(\a,\b,\g)\in R_1(\gamma)$, as shown in Figure~\ref{fig:r1}. By Lemma \ref{lemma:important}, if \eqref{2.6} fails, we have \eqref{5.7}-\eqref{5.8e}.
Since $\alpha - \beta \le 0$ in $R_1(\gamma)$, applying $A^{\alpha - \beta}$ to (\ref{3.30d}) and adding the result to (\ref{3.30b}) yields 
\be\label{3.40}
iv_n +a \l_n^{-1} A^\g u_n + iA^{\a-\b}w_n + \l_n^{-1}A^{2\a-\b}v_n + \l_n^{-1}A^{1 + \a-\b}y_n = o(1).
\ee
Taking the inner product of (\ref{3.40}) with $v_n$ provides
\be\label{3.41}
i\|v_n\|^2 + a\lan \l_n^{-1} A^\g u_n,v_n \ran + i\lan  A^{\a-\b}w_n,v_n \ran +  \|\l_n^{-\frac{1}{2}}A^{\a-\frac{\b}{2}}v_n\|^2 + \lan \l_n^{-1} A^{1+\a-\b}y_n,v_n \ran = o(1).
\ee 
The first and third terms of (\ref{3.41}) are bounded due to \eqref{5.7}. Moreover, by \eqref{5.8d},
$$ \lan \l_n^{-1} A^\g u_n,v_n \ran = \lan A^\frac{\g}{2}u_n, \l_n^{-1}A^\frac{\g}{2}v_n   \ran\leq \|A^\frac{\g}{2}u_n\|\|\l_n^{-1}A^\frac{\g}{2}v_n \| = O(1). $$
Since $2\alpha-2\beta+((1-\gamma)\vee0)\le0$, then again by \eqref{5.8d} and \eqref{5.7},
\begin{equation}\label{6.18}
 \lan \l_n^{-1} A^{1+\a-\b}y_n,v_n \ran =\left\{
\begin{aligned}
\lan  A^{\frac{1}{2}}y_n,\l_n^{-1}A^{\frac{1}{2}+\a-\b} v_n \ran = O(1), & \quad \frac{1}{2}\le\g<1,\\
 \lan  A^{1+\a-\b-\frac{\g}{2}}y_n,\l_n^{-1}A^\frac{\g}{2} v_n \ran = O(1), & \quad 1<\g\leq 2.
\end{aligned}
\right.
\end{equation}
Hence, we can conclude from \eqref{3.41} that $ \|\l_n^{-\frac{1}{2}}A^{\a-\frac{\b}{2}}v_n\|=O(1). $ Then by \eqref{5.8c},
\begin{align}\label{5.89}
     \lan \l_n^{-1} A^\a w_n,v_n \ran = \lan \l_n^{-\frac{1}{2}} A^\frac{\b}{2} w_n, \l_n^{-\frac{1}{2}}A^{\a-\frac{\b}{2}}v_n \ran = o(1).
\end{align}

Next, note that $\frac{1}{2}\le \frac{\g}{2}\vee (1-\frac{\g}{2})\le \b$. Acting $A^{\frac{1}{2}-\b}$ on (\ref{3.30d}) gives
\be\label{3.42} 
	iA^{\frac{1}{2}-\b}w_n + \l_n^{-1}A^{\frac{3}{2}-\b} y_n + \l_n^{-1}A^{\frac{1}{2}-\b+\a} v_n + \l_n^{-1}A^\frac{1}{2} w_n=o(1). 
\ee
Since $\alpha-\beta+((\frac{1}{2}-\frac{\g}{2})\vee0)\le 0$, we conclude from \eqref{5.8d} and \eqref{5.8e} that the first, third and fourth term of (\ref{3.42}) are bounded, which implies   $\|\l_n^{-1}A^{\frac{3}{2}-\b} y_n\| = O(1)$. Then 
\be\label{3.43} \| \l_n^{-\frac{1}{2}}A^{1 - \frac{\b}{2}} y_n \|^2 = \lan \l_n^{-1}A^{\frac{3}{2}-\b} y_n, A^\frac{1}{2}y_n \ran = O(1). 
\ee
It follows again from \eqref{5.8c} that
\be\label{3.44} \lan \l_n^{-1} Ay_n,w_n \ran = \lan\l_n^{-\frac{1}{2}} A^{1-\frac{\b}{2}}y_n,\l_n^{-\frac{1}{2}}A^\frac{\b}{2} w_n \ran = o(1). 
\ee
Therefore, one can deduce from \eqref{5.83} and \eqref{3.44} that 
\be\label{5.71}  \|A^\frac{1}{2}y_n\|=o(1). \ee 
Then by \eqref{6.18}, we immediately have 
\begin{align}\label{5.73}
     \lan \l_n^{-1} A^{1+\a-\b}y_n,v_n \ran =o(1).
\end{align}
Moreover, by \eqref{5.8c}, \eqref{5.84}, \eqref{5.89}, and \eqref{3.44}, we obtain
\begin{equation}\label{3.45}
\|w_n\| = o(1).
\end{equation}
In addition, \eqref{5.82} together with \eqref{5.89} implies that
\begin{equation}\label{5.72}
i\|v_n\|^2 + a\langle \lambda_n^{-1} A^\gamma u_n,v_n \rangle = o(1).
\end{equation}
We now substitute \eqref{5.73}, \eqref{3.45}, and \eqref{5.72} into \eqref{3.41}, which leads to
\begin{equation}\label{3.46}
 \|\l_n^{-\frac{1}{2}}A^{\a-\frac{\b}{2}}v_n\|=o(1).
\end{equation}
This, in turn, yields
\be\label{3.47} 
 \lan \l_n^{-1} A^\g u_n,v_n \ran = \lan \l_n^{-\frac{1}{2}} A^{\g-\a+\frac{\b}{2}} u_n,\l_n^{-\frac{1}{2}}A^{\a-\frac{\b}{2}}v_n \ran = o(1)
\ee
provided that
\begin{align}\label{5.74}
    \|\l_n^{-\frac{1}{2}} A^{\g-\a+\frac{\b}{2}} u_n\|=O(1).
\end{align} 

Finally, we show \eqref{5.74} holds. In fact, since $\frac{1}{2}\leq \alpha$, acting $A^{\frac{1}{2}-\alpha}$ on (\ref{3.30b}) yields
$$iA^{\frac{1}{2}-\alpha}v_n+a\l_n^{-1}A^{\gamma+\frac{1}{2}-\alpha} u_n-\l_n^{-1}A^{\frac{1}{2}}w_n=o(1).$$
Note that (\ref{5.8e}) and (\ref{5.71}) implies $\l_n^{-1}A^{\frac{1}{2}}w_n=o(1)$. This together with the above equation shows
\begin{align}\label{5.3}
   \| \l_n^{-1}A^{\gamma+\frac{1}{2}-\alpha} u_n\|=O(1).
\end{align}
Since $\g-2\a+\b \le \frac{\g}{2}$ in $R_1(\gamma)$, by taking the inner product of (\ref{3.40}) with 
$A^{\g-2\a+\b}u_n$ in $H$, we obtain
$$ i\lan v_n + A^{\a-\b}w_n,A^{\g-2\a+\b} u_n \ran  + \lan \l_n^{-1} A^\frac{\g}{2} v_n,A^\frac{\g}{2}u_n \ran + a\|\l_n^{-\frac{1}{2}} A^{\g-\a+\frac{\b}{2}} u_n\|^2+ \lan A^\frac{1}{2}y_n,\l_n^{-1}A^{\g+\frac{1}{2}-\a}u_n \ran = o(1).
$$
The boundedness of the first two inner product terms is immediate, and the third term is bounded by \eqref{5.3}. Therefore, \eqref{5.74} holds. Combining this with \eqref{5.71}, \eqref{5.72}, and \eqref{3.47} yields $\|v_n\|^2+\|A^{1\over2}y_n\|^2=o(1)$, which contradicts to \eqref{5.8b}. 
\end{proof}

Recall that $$R_2(\gamma)=\Big\{(\a,\b,\g)\in  R(\g) \bigm|2\alpha-\beta>0,4\alpha-2\beta-\gamma<0, \beta  \ge (1-\frac{\gamma}{2})\vee\frac{\g}{2} \Big\},\;\; {1\over2}\le\gamma \le 2.$$
Before proving Theorem \ref{main2}, we first prove the following lemma which will be used later. Note that  if \eqref{2.7} is not true, by Lemma \ref{lemma:important}, we have \eqref{5.7}-\eqref{5.8e} hold. Then we can deduce the following estimates using interpolation.
\begin{lemma}
  Let $(\a,\b,\g)\in R_2(\gamma)$, and $\m=\frac{2(2\a-\b)}{\g}$. If \eqref{2.7} is not true, then the following estimates hold:
    \begin{align}
     &\|\l_n^{-1+\frac{\m}{2}}A^{\g -\a+\frac{\b}{2}}u_n\| =O(1),\label{3.50} \\
 &  \| \lambda_n^{-\frac{\m}{2}}A^{\frac{\g}{2}+\alpha-\frac{\beta}{2}}u_n\| =O(1),  \label{5.22}\\
 &    \|\lambda_n^{-\frac{\m}{2}}A^{1+\alpha-\frac{3\b}{2}}u_n\| =O(1).\label{5.224}
\end{align}
\end{lemma}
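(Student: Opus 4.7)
These three estimates are higher-order weighted bounds on $u_n$ that will feed into the subsequent contradiction argument for the resolvent estimate in $R_2(\gamma)$. My strategy is to establish \eqref{3.50} first as the master estimate via a quadratic-form analysis of equation \eqref{3.30b}, and then deduce \eqref{5.22} and \eqref{5.224} from it by applying the interpolation lemma of Section~\ref{sec:pre}. The defining inequalities of $R_2(\gamma)$, namely $\beta/2<\alpha$, $2\alpha-\beta<\gamma/2$, and $\beta\ge(1-\gamma/2)\vee(\gamma/2)$, will be used throughout to guarantee that every fractional power of $A$ appearing in the interpolation lies in the admissible range, and that the interpolation weights simplify to exactly $\lambda_n^{\mu/2}$ thanks to the definition $\mu=2(2\alpha-\beta)/\gamma$.

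For \eqref{3.50}, the plan is to take the inner product of \eqref{3.30b} with the multiplier $\lambda_n^{\mu-2}A^{\gamma-2\alpha+\beta}u_n$; note that $\gamma-2\alpha+\beta>0$ in $R_2(\gamma)$ since $\gamma>2\alpha-\beta$. The middle term of \eqref{3.30b} then produces $a\lambda_n^{-2}\|A^{\gamma-\alpha+\beta/2}u_n\|^2$, which after rescaling by $\lambda_n^{\mu}$ is the squared target quantity. The cross term involving $A^\alpha w_n$, after shifting powers, becomes $\lambda_n^{-2}\langle A^{\beta/2}w_n,A^{\gamma-\alpha+\beta/2}u_n\rangle$ and is controlled via Cauchy--Schwarz together with the Gevrey bound \eqref{5.8c} giving $\|A^{\beta/2}w_n\|=o(\lambda_n^{\mu/2})$, which by Young's inequality is absorbable into the main quadratic term. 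The $v_n$ cross term $\lambda_n^{-1}\langle v_n,A^{\gamma-2\alpha+\beta}u_n\rangle$ is the delicate one and will be handled via \eqref{5.8d}: writing it as $\lambda_n^{-1}\langle A^{\gamma/2}v_n,A^{\gamma/2-2\alpha+\beta}u_n\rangle$ and substituting $A^{\gamma/2}v_n=i\lambda_n A^{\gamma/2}u_n+o(\lambda_n)$ produces a clean contribution proportional to $\|A^{\gamma/2-\alpha+\beta/2}u_n\|^2$, whose size I will bound by combining $\|u_n\|=o(\lambda_n^{\mu-1})$ (a direct consequence of \eqref{5.8d} and $\|v_n\|\le 1$) with interpolation against $\|A^{\gamma/2}u_n\|=O(1)$.

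Once \eqref{3.50} is established, \eqref{5.22} follows from the interpolation lemma with $r=\gamma/2$, $p=\gamma/2+\alpha-\beta/2$, and $q=\gamma-\alpha+\beta/2$: the target exponent $p$ sits strictly between $r$ and $q$ in $R_2(\gamma)$, and the arithmetic of $\mu$ makes the resulting weight equal to $\lambda_n^{\mu/2}$ exactly. For \eqref{5.224}, the condition $\beta\ge 1-\gamma/2$ forces $1+\alpha-3\beta/2\le\gamma/2+\alpha-\beta/2$, so when $1+\alpha-3\beta/2\ge 0$ a further interpolation between $\|u_n\|=O(1)$ (from the positive definiteness of $A$) and \eqref{5.22} yields the claim; if $1+\alpha-3\beta/2<0$, then $A^{1+\alpha-3\beta/2}$ is bounded on $H$ and the bound is immediate.

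The main obstacle is the precise tracking of $\lambda_n$-powers in the $v_n$ cross term of Step 1. A naive Cauchy--Schwarz using only $\|v_n\|\le 1$ returns the crude, circular estimate $\|A^{\gamma-\alpha+\beta/2}u_n\|=O(\lambda_n)$, which lacks the critical $\lambda_n^{-\mu/2}$ improvement. The sharp bound emerges only when one couples the Gevrey decay \eqref{5.8c} on the damping side with the smallness $\|u_n\|=o(\lambda_n^{\mu-1})$ on the elastic side; the algebraic identity $\mu=2(2\alpha-\beta)/\gamma$ is precisely the coincidence that makes the Young's-inequality absorption close at the end.
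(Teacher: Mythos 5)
Your reductions of \eqref{5.22} and \eqref{5.224} from \eqref{3.50} are sound and essentially coincide with the paper's interpolation arguments (the exponent arithmetic you invoke does check out). The problem lies in your proof of the master estimate \eqref{3.50}, which is also where you depart from the paper: the paper does not test \eqref{3.30b} against a weighted power of $u_n$; it first adds $A^{\alpha-\beta}$ applied to \eqref{3.30d} to \eqref{3.30b} so that the troublesome $A^{\alpha}w_n$ term cancels (giving \eqref{3.40}), deduces $\|\lambda_n^{-1}A^{\gamma}\theta_n\|=O(1)$ for $\theta_n=u_n+A^{1+\alpha-\beta-\gamma}y_n$ at the level of norms, interpolates on $\theta_n$ between the orders $\gamma/2$ and $\gamma$, and finally strips off the $y_n$ contribution using \eqref{3.43}. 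This avoids the delicate cross term entirely.

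In your quadratic-form route the gap is quantitative and fatal as stated. Pairing \eqref{3.30b} with $\lambda_n^{\mu-2}A^{\gamma-2\alpha+\beta}u_n$ and normalizing so that the main term is $a\|\lambda_n^{-1+\mu/2}A^{\gamma-\alpha+\beta/2}u_n\|^2$, the $v_n$ cross term after using \eqref{5.8d} contributes a term of size $\lambda_n^{\mu}\|A^{\gamma/2-\alpha+\beta/2}u_n\|^2$, so your argument closes only if $\|A^{\gamma/2-\alpha+\beta/2}u_n\|=O(\lambda_n^{-\mu/2})$. But the bound you propose, interpolating between $\|u_n\|=o(\lambda_n^{\mu-1})$ and $\|A^{\gamma/2}u_n\|=O(1)$, gives only
\begin{equation*}
\|A^{\gamma/2-\alpha+\beta/2}u_n\|\le \|A^{\gamma/2}u_n\|^{1-\frac{\mu}{2}}\|u_n\|^{\frac{\mu}{2}}=o\big(\lambda_n^{\mu(\mu-1)/2}\big),
\end{equation*}
and since $\mu(\mu-1)/2+\mu/2=\mu^2/2>0$ this misses the required rate by a factor $\lambda_n^{\mu^2/2}$; the Young absorption then leaves an uncontrolled $o(\lambda_n^{\mu^2})$ remainder and \eqref{3.50} does not follow. (Even the sharper substitution $A^{\gamma/2-\alpha+\beta/2}u_n=-i\lambda_n^{-1}A^{\gamma/2-\alpha+\beta/2}v_n+o(\lambda_n^{\mu-1})$ only yields $O(\lambda_n^{-\mu/2})+o(\lambda_n^{\mu-1})$, which is too large once $\mu>2/3$.) A repair within your framework is to convert the cross term entirely to $v_n$: it equals $-\lambda_n^{\mu-2}\|A^{\gamma/2-\alpha+\beta/2}v_n\|^2$ up to an $o(1)$ remainder, and interpolating $\|A^{\gamma/2-\alpha+\beta/2}v_n\|\le\|A^{\gamma/2}v_n\|^{1-\mu/2}\|v_n\|^{\mu/2}=O(\lambda_n^{1-\mu/2})$, using $\|A^{\gamma/2}v_n\|=O(\lambda_n)$ from \eqref{3.30a}, makes this term $O(1)$. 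Finally, you never treat the pairing of the $o(1)$ right-hand side with $\lambda_n^{\mu-2}A^{\gamma-2\alpha+\beta}u_n$: since $\gamma-2\alpha+\beta>\gamma/2$ in $R_2(\gamma)$, the norm $\|A^{\gamma-2\alpha+\beta}u_n\|$ is not a priori bounded and this term requires its own interpolation--Young absorption (the exponents do work out because $\mu<1$, but the step must be carried out).
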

\begin{proof}
Note that $\mu \in (0,1)$ and $\frac{1}{2} \le \left(1 - \frac{\gamma}{2}\right) \vee \frac{\gamma}{2} \le \beta$ also holds in the region $R_2(\gamma)$. Thus, both \eqref{3.42} and \eqref{3.43} remain valid. Since we still have 
\begin{align}\label{6.181}
    2\alpha - 2\beta + \big((1 - \gamma) \vee 0\big) < 0
\end{align}
 in  $R_2(\gamma)$, we apply $\lambda_n^{\mu - 1} A^{\alpha - \beta}$ to \eqref{3.30d} and  $\lambda_n^{\mu - 1} $ to \eqref{3.30b}; adding the resulting once again yields \eqref{3.40}.

Define
$$\theta_n := u_n + A^{1 + \alpha - \beta - \gamma} y_n.$$
Then it follows that
$$\|A^{\frac{\gamma}{2}} \theta_n\| = O(1).$$
Using \eqref{5.8d}, \eqref{5.7}, and \eqref{3.40}, we further obtain
$$\|\lambda_n^{-1} A^\gamma \theta_n\| =O(1).$$

Since $\frac{\g}{2} < \g - \a +\frac{\b}{2} < \g$, by interpolation,
$$
\|\l_n^{-1+\frac{\m}{2}}A^{\g -\a+\frac{\b}{2}}\theta_n\|\le \|\l_n^{-1} A^\g \theta_n\|^{1-\frac{2\a-\b}{\g}} \|A^{\frac{\g}{2}}\theta_n\|^\frac{2\a-\b}{\g} = O(1), 
$$
then \eqref{3.50} holds since 
\begin{align*}
    \|\l_n^{-1+\frac{\m}{2}}A^{\g -\a+\frac{\b}{2}}u_n\|\le \|\l_n^{-1+\frac{\m}{2}}A^{\g -\a+\frac{\b}{2}}\theta_n\| + \|\l_n^{-1+\frac{\m}{2}}A^{1-\frac{\b}{2}}y_n\|=O(1).
\end{align*}
The boundedness of $\|\l_n^{-1+\frac{\m}{2}}A^{1-\frac{\b}{2}}y_n\|$ follows from \eqref{3.43} and $-1+\frac{\mu}{2} {<} -\frac{1}{2}$. 

   Since $\frac{\gamma}{2}<\frac{\g}{2}+\alpha-\frac{\beta}{2}< \gamma -\alpha+\frac{\beta}{2}$,  one concludes from \eqref{3.50} that
\begin{align*}
   \| \lambda_n^{-\frac{\m}{2}}A^{\frac{\g}{2}+\alpha-\frac{\beta}{2}}u_n\|\leq \|A^{\frac{\g}{2}}u_n\|^{1-\frac{2\alpha-\beta}{\gamma+\beta-2\alpha}}\|\l_n^{-1+\frac{\mu}{2}}A^{\g-\alpha+\frac{\beta}{2}}u_n\|^{\frac{2\alpha-\beta}{\gamma+\beta-2\alpha}}=O(1).
\end{align*}
Moreover, if $1+\alpha-\frac{3\b}{2}{\le }\frac{\gamma}{2}$, then \eqref{5.224} holds clearly; otherwise, by \eqref{6.181}, we have $\frac{\gamma}{2} {<} 1+\alpha-\frac{3\b}{2}\leq \gamma-\alpha+\frac{\beta}{2}$. Then
 $$\|\lambda_n^{-\frac{2+2\alpha-3\beta-\gamma}{\gamma}}A^{1+\alpha-\frac{3\b}{2}}u_n\|\leq \|\lambda_n^{-1+\frac{\m}{2}}A^{\gamma-\alpha+\frac{\beta}{2}}u_n\|^{\frac{2+2\alpha-3\beta-\gamma}{\gamma+\b-2\a}} \|A^{\frac{\gamma}{2}}u_n\|^{1-\frac{2+2\alpha-3\beta-\gamma}{\gamma+\b-2\a}}=O(1).$$
Thus we have \eqref{5.224} since \eqref{3.50} and $\frac{2+2\alpha-3\beta-\gamma}{\gamma}\leq \frac{\mu}{2}$, which is from  $\beta>(1-\frac{\gamma}{2})\vee\frac{\g}{2}$.
\end{proof}

\begin{figure}[t]
\centering
\begin{subfigure}{0.19\textwidth}
\centering
\includegraphics[width=\textwidth]{./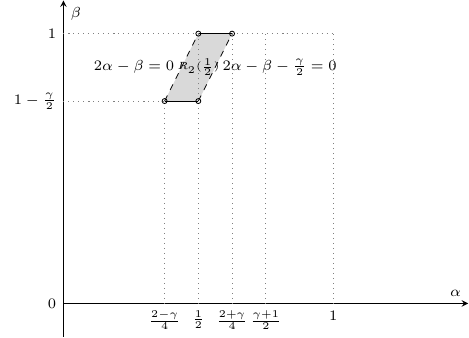}
\caption{$\gamma=\frac{1}{2}$}
\end{subfigure}~
\begin{subfigure}{0.19\textwidth}
\centering
\includegraphics[width=\textwidth]{./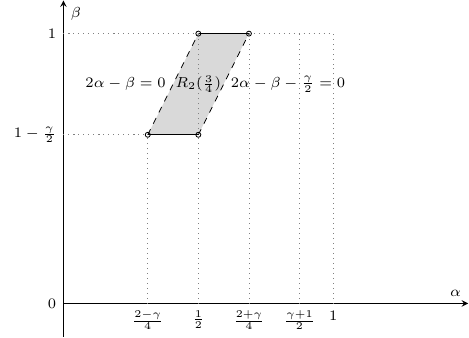}
\caption{$\gamma=\frac{3}{4}$}
\end{subfigure}
\centering
\begin{subfigure}{0.19\textwidth}
\centering
\includegraphics[width=\textwidth]{./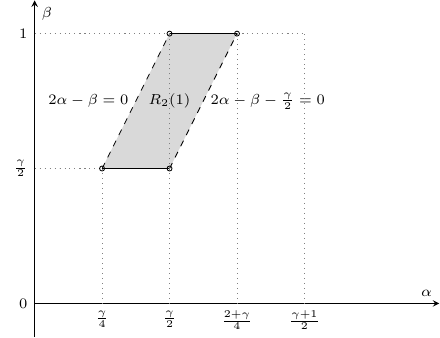}
\caption{$\gamma=1$, $a\ne 1$}
\end{subfigure}~
\begin{subfigure}{0.19\textwidth}
\centering
\includegraphics[width=\textwidth]{./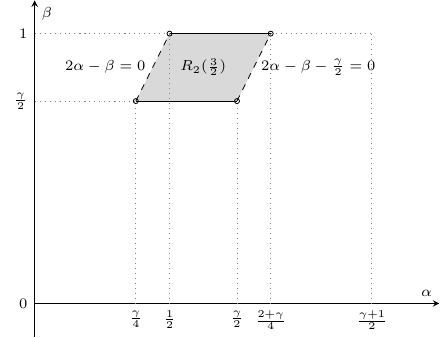}
\caption{$\gamma=\frac{3}{2}$}
\end{subfigure}~
\begin{subfigure}{0.19\textwidth}
\centering
\includegraphics[width=\textwidth]{./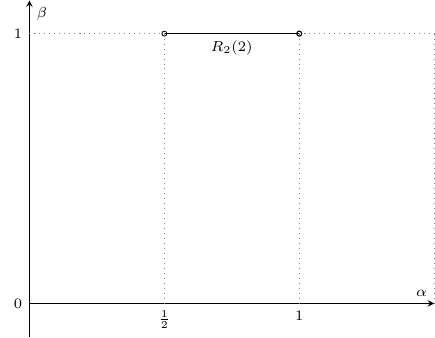}
\caption{$\gamma=2$}
\end{subfigure}
\caption{Visualization of $R_2(\gamma)$ when $\gamma=\frac{1}{2},\frac{3}{4},1,\frac{3}{2},2$.}
\label{fig:r2}
\end{figure}
The regularity in $R_2(\gamma)$ is stated as follows.
\begin{proposition}
    \label{main2}
  The  semigroup $e^{\cA t}$ is of Gevrey class $\delta>\frac{\g}{2(2\alpha-\beta)}$ in $R_2(\gamma)$.
\end{proposition}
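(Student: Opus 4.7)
The plan is to apply Lemma \ref{lemma:method}(ii) with $\mu = \frac{2(2\alpha-\beta)}{\gamma} \in (0,1)$, which reduces the task to verifying the resolvent estimate \eqref{2.7}. I argue by contradiction: assume \eqref{2.7} fails. Then Lemma \ref{lemma:important} supplies a sequence $(\lambda_n, U_n)$ with $U_n = (u_n, v_n, y_n, w_n)^\top \in \mathcal{D}_0$ satisfying \eqref{5.7}--\eqref{5.8e}, and the preceding lemma furnishes the interpolation bounds \eqref{3.50}, \eqref{5.22}, \eqref{5.224}. The target is to show $\|v_n\|^2 + \|A^{1/2}y_n\|^2 = o(1)$, which contradicts \eqref{5.8b}.

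Structurally, I would mimic the proof of Proposition \ref{main1}, only now every cross-term carries an extra factor of $\lambda_n^{-\mu/2}$ that must be absorbed through the auxiliary interpolation estimates. Since $2\alpha - 2\beta + ((1-\gamma)\vee 0) < 0$ still holds in $R_2(\gamma)$, multiplying \eqref{3.30d} by $A^{\alpha-\beta}$ and adding it to \eqref{3.30b} again yields \eqref{3.40}. Pairing with $v_n$ produces the analogue of \eqref{3.41}. Each term is controlled by splitting its operator power and invoking the correct bound: the term $\langle \lambda_n^{-1} A^\gamma u_n, v_n\rangle$ through \eqref{5.22} and a matching estimate $\|\lambda_n^{-\mu/2} A^{\gamma/2-\alpha+\beta/2} v_n\| = O(1)$ that follows from \eqref{5.7} since $\gamma/2 - \alpha + \beta/2 \leq 0$ is not generic but the boundedness still emerges from $\|v_n\| = O(1)$ after suitable interpolation; the term $\langle \lambda_n^{-1} A^{1+\alpha-\beta} y_n, v_n\rangle$ via the splitting $\langle A^{1-\beta/2} y_n, \lambda_n^{-1} A^{\alpha-\beta/2} v_n\rangle$ combined with \eqref{3.43} (which survives the argument).

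Once the analogue of \eqref{3.41} is established, \eqref{5.8c} is used to eliminate the damping cross-term by writing
\[
\langle \lambda_n^{-\mu} A^\alpha w_n, v_n\rangle = \langle \lambda_n^{-\mu/2} A^{\beta/2} w_n,\; \lambda_n^{-\mu/2} A^{\alpha - \beta/2} v_n\rangle = o(1),
\]
where the bound on the second factor comes from \eqref{3.41}. Applying $A^{1/2-\beta}$ to \eqref{3.30d} and interpolating via Lemma 2.5 (using $\beta \geq (1-\gamma/2) \vee \gamma/2$) yields $\|\lambda_n^{-\mu/2} A^{1-\beta/2} y_n\| = O(1)$, hence $\langle \lambda_n^{-\mu} Ay_n, w_n\rangle = o(1)$. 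Feeding these into the scalar identities \eqref{5.81}--\eqref{5.84} with the $\lambda_n^{-\mu}$ weighting in place of $\lambda_n^{-1}$ eventually forces $\|A^{1/2} y_n\| = o(1)$ and $\|v_n\| = o(1)$, contradicting \eqref{5.8b}.

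The principal obstacle is the precise exponent bookkeeping: every splitting must balance the available $\lambda_n^{-\mu/2}$ factors against the interpolation bounds \eqref{3.50}, \eqref{5.22}, \eqref{5.224}, and these bounds are tight precisely because of the defining inequalities $4\alpha - 2\beta - \gamma < 0$, $2\alpha - \beta > 0$, and $\beta \geq (1-\gamma/2) \vee \gamma/2$ of $R_2(\gamma)$. In particular, $\mu = \frac{2(2\alpha-\beta)}{\gamma}$ is the unique exponent for which the cross-term $\langle \lambda_n^{-\mu} A^\gamma u_n, v_n \rangle$ can be split into two factors each of order $\lambda_n^{-\mu/2} A^{\gamma/2 + \alpha - \beta/2}$ and its dual, making the interpolation estimate \eqref{5.22} sharp and matching the anticipated Gevrey order.
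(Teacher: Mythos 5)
Your setup is the same as the paper's: apply Lemma \ref{lemma:method}(ii) with $\mu=\frac{2(2\alpha-\beta)}{\gamma}$, argue by contradiction via Lemma \ref{lemma:important}, and aim for $\|v_n\|^2+\|A^{1/2}y_n\|^2=o(1)$ against \eqref{5.8b}. The part of your argument that yields $\|A^{1/2}y_n\|=o(1)$ is essentially sound, modulo one misstatement: interpolating between $\|A^{1/2}y_n\|=O(1)$ and $\|\lambda_n^{-1}A^{3/2-\beta}y_n\|=O(1)$ gives only $\|\lambda_n^{-1/2}A^{1-\beta/2}y_n\|=O(1)$ (this is \eqref{3.43}), not $\|\lambda_n^{-\mu/2}A^{1-\beta/2}y_n\|=O(1)$, which is false for $\mu<1$. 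The weaker bound still suffices there because $-1+\mu/2<-1/2$, so $\langle\lambda_n^{-1}Ay_n,w_n\rangle=\langle\lambda_n^{-1+\mu/2}A^{1-\beta/2}y_n,\lambda_n^{-\mu/2}A^{\beta/2}w_n\rangle=o(1)$.

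The genuine gap is in the $\|v_n\|=o(1)$ half. After eliminating the damping cross-term, \eqref{5.82} reduces to $i\|v_n\|^2+a\langle\lambda_n^{-1}A^\gamma u_n,v_n\rangle=o(1)$, so everything hinges on showing this last inner product is $o(1)$ and not merely $O(1)$. Every splitting you propose for it — \eqref{5.22} paired with $\|\lambda_n^{-1+\mu/2}A^{\gamma/2-\alpha+\beta/2}v_n\|=O(1)$, or \eqref{3.50} paired with $\|\lambda_n^{-\mu/2}A^{\alpha-\beta/2}v_n\|=O(1)$ — produces only an $O(1)$ bound, which gives no contradiction. What is actually needed is the \emph{smallness} estimate \eqref{5.19}, $\|\lambda_n^{-\mu/2}A^{\alpha-\beta/2}v_n\|=o(1)$, and proving it is the technical heart of the paper's argument: one pairs \eqref{3.30d} with $A^{\alpha-\beta}v_n$ to isolate $\|\lambda_n^{-\mu/2}A^{\alpha-\beta/2}v_n\|^2$ as in \eqref{5.222}, then kills the three remaining terms one by one — the term $i\langle\lambda_n^{-\mu+1}w_n,A^{\alpha-\beta}v_n\rangle$ by a dual pairing of \eqref{3.30b} with $A^{\alpha-\beta}w_n$ together with \eqref{5.8c} and \eqref{5.22}, and the term $\langle\lambda_n^{-\mu}Ay_n,A^{\alpha-\beta}v_n\rangle$ by the substitution chain \eqref{5.223} that routes through \eqref{3.30a}, \eqref{3.30c}, \eqref{5.8c}, and the interpolation bound \eqref{5.224} (this is precisely where the defining inequality $\beta\ge(1-\frac{\gamma}{2})\vee\frac{\gamma}{2}$ of $R_2(\gamma)$ is consumed). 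None of this appears in your proposal; the sentence ``eventually forces $\|v_n\|=o(1)$'' stands in for the entire crux of the proof. As written, the argument does not close.
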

\begin{proof}
 By Lemma \ref{lemma:method}, it suffices to prove \eqref{2.7} for $\m=\frac{2(2\a-\b)}{\g}\in (0,1)$. Let $(\a,\b,\g)\in R_2(\gamma)$, as shown in Figure~\ref{fig:r2}. By Lemma \ref{lemma:important}, if \eqref{2.7} fails,  we have \eqref{5.7}-\eqref{5.8e}. 
It follows from \eqref{5.8c}, \eqref{3.43} and $-1+\frac{\mu}{2} {<} -\frac{1}{2}$ that
\be\label{7.22} \lan \l_n^{-1} Ay_n, w_n \ran = \lan\l_n^{-1+\frac{\mu}{2}} A^{1-\frac{\b}{2}}y_n, \l_n^{-\frac{\mu}{2}}A^\frac{\b}{2} w_n \ran = o(1). 
\ee
Therefore,  \eqref{5.83} and \eqref{7.22} yield that 
\be\label{7.221}  \|A^\frac{1}{2}y_n\|=o(1). \ee 
On the other hand, since $0<\a-\frac{\b}{2} < \frac{\g}{2}$, by interpolation
\begin{align*}
    \|\l_n^{-\frac{\m}{2}}A^{\a-\frac{\b}{2}}v_n\| \le \|\l_n^{-1}A^\frac{\g}{2}v_n\|^\frac{2\a-\b}{\g} \|v_n\|^{1-\frac{2\a-\b}{\g}}=O(1).
\end{align*}
By \eqref{5.8c} and the above estimation, we then obtain
\begin{align}\label{5.202}
    \lan \l_n^{-\mu} A^\a w_n, v_n \ran = \lan \l_n^{-\frac{\m}{2}} A^\frac{\b}{2} w_n,
 \l_n^{-\frac{\m}{2}}A^{\a-\frac{\b}{2}}v_n \ran = o(1).
\end{align}
Moreover, \eqref{5.82} and \eqref{5.202} imply 
\begin{align}\label{7.222}
    i\|v_n\|^2 + a\lan \l_n^{-1} A^\g u_n, v_n \ran=o(1).
\end{align}
If
\begin{align}\label{5.19}
    \|\l_n^{-\frac{\m}{2}}A^{\a-\frac{\b}{2}}v_n\|=o(1),
\end{align}
then by \eqref{3.50} we obtain
\begin{align*}
    \lan \l_n^{-1} A^\g u_n, v_n \ran = \lan \l_n^{-1+\frac{\m}{2}}A^{\g -\a+\frac{\b}{2}}u_n,
 \l_n^{-\frac{\m}{2}}A^{\a-\frac{\b}{2}}v_n \ran = o(1).
\end{align*}
 This, together with \eqref{7.221} and \eqref{7.222} implies $\|v_n\|^2+\|A^{1\over2}y_n\|^2=o(1)$, which contradicts to \eqref{5.8b}.

We now show \eqref{5.19} holds. Since $\a - \b \le 0$ in $R_2(\gamma)$, taking the inner product of \eqref{3.30d} with $A^{\alpha-\beta}v_n$, we get 
\begin{align}\label{5.222}
    i\lan\lambda_n^{-\m+1}w_n, A^{\alpha-\beta}v_n\ran+\lan\lambda_n^{-\m}Ay_n, A^{\alpha-\beta}v_n\ran+\|\l_n^{-\frac{\m}{2}}A^{\a-\frac{\b}{2}}v_n\|^2+\lan\lambda_n^{-\m}A^{\beta}w_n, A^{\alpha-\beta}v_n\ran=o(1).
\end{align}
Taking the inner product of \eqref{3.30b} with $A^{\alpha-\beta }w_n$ on $H$, we have
$$i\lan  \l_n^{-\m+1}v_n,  A^{\a-\b}w_n\ran+a\lan\l_n^{-\mu}A^{\gamma}u_n, A^{\alpha-\b}w_n\ran-\|\lambda_n^{-\frac{\m}{2}}A^{\alpha-\frac{\beta}{2}}w_n\|^2=o(1).$$
It is easy to see that $\|\lambda_n^{-\frac{\m}{2}}A^{\alpha-\frac{\beta}{2}}w_n\|=o(1)$ in the above equation due to $\a-\frac{\beta}{2}\leq \frac{\beta}{2}$ and \eqref{5.8c}. Since  $\beta>(1-\frac{\gamma}{2})\vee\frac{\g}{2}$, we also have $\|\lambda_n^{-\frac{\m}{2}}A^{((1-\frac{\gamma}{2})\vee\frac{\g}{2})-\frac{\beta}{2}}w_n\|=o(1)$. Note that ${\gamma}-((1-\frac{\gamma}{2})\vee\frac{\g}{2})+\alpha-\frac{\beta}{2}\leq \frac{\gamma}{2}+\alpha-\frac{\beta}{2}$, then  \eqref{5.8c} and \eqref{5.22} yields
$$\lan\l_n^{-\mu}A^{\gamma}u_n, A^{\alpha-\b}w_n\ran\leq \|\lambda_n^{-\frac{\m}{2}}A^{{\gamma}-((1-\frac{\gamma}{2})\vee\frac{\g}{2})+\alpha-\frac{\beta}{2}}u_n\|\|\lambda_n^{-\frac{\m}{2}}A^{((1-\frac{\gamma}{2})\vee\frac{\g}{2})-\frac{\beta}{2}}w_n\|=o(1).$$
Thus,
\begin{align}\label{5.205}
   i \lan  \l_n^{-\m+1}v_n,  A^{\a-\b}w_n\ran=o(1).
\end{align}

On the other hand, by \eqref{6.181}, we have $\frac{1}{2}+\alpha-\beta-\frac{\gamma}{2}\leq 0$. Then by \eqref{5.8c}, \eqref{3.30a}, \eqref{3.30c}  and \eqref{5.224}, we obtain
\begin{align}\label{5.223}
\begin{array}{lll}
     \lan\lambda_n^{-\m}Ay_n, A^{\alpha-\beta}v_n\ran&=\lan A^{1+\alpha-\beta-\frac{\gamma}{2}}y_n, \lambda_n^{-\m}A^{\frac{\gamma}{2}}v_n\ran\\   \noalign{\medskip}  
    &=-i\lan A^{1+\alpha-\beta-\frac{\gamma}{2}}y_n, \lambda_n^{1-\m}A^{\frac{\gamma}{2}}u_n\ran+o(1)\\ \noalign{\medskip}
    &=-i\lan \lambda_n^{1-\m}A^{\frac{1}{2}}y_n, A^{\frac{1}{2}+\alpha-\beta}u_n\ran+o(1)\\ \noalign{\medskip}
    &=-\lan \lambda_n^{-\frac{\m}{2}}A^{\frac{\beta}{2}}w_n, \lambda_n^{-\frac{\m}{2}}A^{1+\alpha-\frac{3\b}{2}}u_n\ran+o(1)\\ \noalign{\medskip}
   & \leq \|\lambda_n^{-\frac{\m}{2}}A^{\frac{\beta}{2}}w_n\|\|\lambda_n^{-\frac{\m}{2}}A^{1+\alpha-\frac{3\b}{2}}u_n\|+o(1)\\ \noalign{\medskip}
    &=o(1). 
\end{array} 
\end{align}
Therefore, \eqref{5.19} follows from \eqref{5.202}, \eqref{5.222}, \eqref{5.205} and \eqref{5.223}.
\end{proof}

\begin{figure}[t]
\centering
\begin{subfigure}{0.19\textwidth}
\centering
\includegraphics[width=\textwidth]{./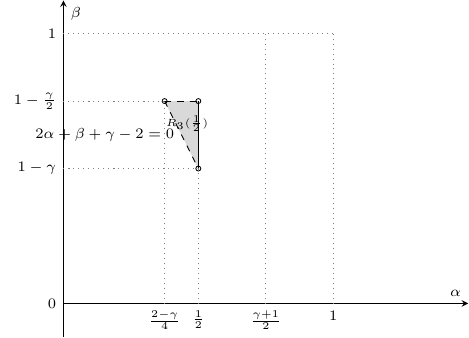}
\caption{$\gamma=\frac{1}{2}$}
\end{subfigure}~
\begin{subfigure}{0.19\textwidth}
\centering
\includegraphics[width=\textwidth]{./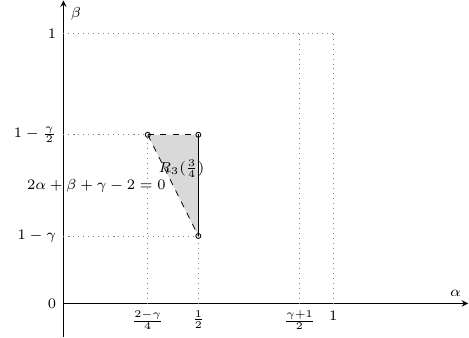}
\caption{$\gamma=\frac{3}{4}$}
\end{subfigure}
\centering
\begin{subfigure}{0.19\textwidth}
\centering
\includegraphics[width=\textwidth]{./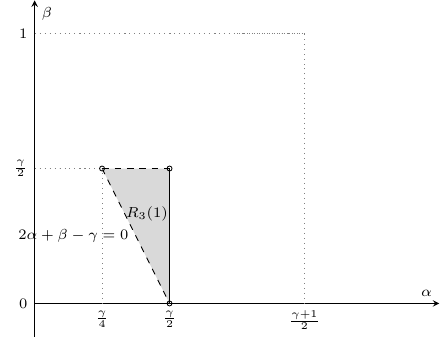}
\caption{$\gamma=1$, $a\ne 1$}
\end{subfigure}~
\begin{subfigure}{0.19\textwidth}
\centering
\includegraphics[width=\textwidth]{./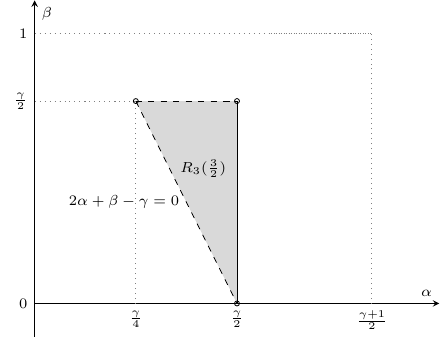}
\caption{$\gamma=\frac{3}{2}$}
\end{subfigure}~
\begin{subfigure}{0.19\textwidth}
\centering
\includegraphics[width=\textwidth]{./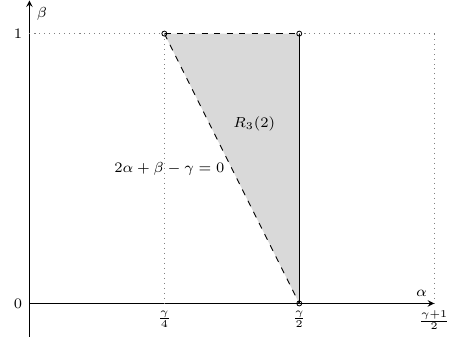}
\caption{$\gamma=2$}
\end{subfigure}
\caption{Visualization of $R_3(\gamma)$ when $\gamma=\frac{1}{2},\frac{3}{4},1,\frac{3}{2},2$.}
\label{fig:r3}
\end{figure}
\begin{proposition}\label{main3}
 The  semigroup $e^{\cA t}$ is of Gevrey class $\delta>\frac{\g}{2(2\alpha+\beta-(\gamma\vee (2-\gamma)))}$  in 
  $$R_3(\gamma)=\Big\{(\a,\b,\g)\in  R(\g)  \bigm|2\alpha+\beta-(\gamma\vee (2-\gamma))>0, \; \alpha  \le \frac{1\vee \g}{2}, \;  \beta<(1-\frac{\gamma}{2})\vee\frac{\g}{2} \Big\},\;\; {1\over2}\le\gamma \le 2.$$
\end{proposition}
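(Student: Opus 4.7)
By Lemma \ref{lemma:method}(ii), it suffices to verify \eqref{2.7} with the exponent $\mu = \frac{2(2\alpha+\beta-(\gamma\vee(2-\gamma)))}{\gamma}$. The defining inequalities of $R_3(\gamma)$ — namely $2\alpha+\beta > \gamma\vee(2-\gamma)$, $\alpha \le (1\vee\gamma)/2$, and $\beta < (1-\gamma/2)\vee(\gamma/2)$ — together imply $\mu \in (0,1)$. Following the contradiction scheme of Propositions \ref{main1} and \ref{main2}, I assume \eqref{2.7} fails with this $\mu$ and invoke Lemma \ref{lemma:important} to obtain a sequence $(U_n) = (u_n, v_n, y_n, w_n)^T$ satisfying \eqref{5.7}--\eqref{5.8e}. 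The target contradiction is $\|v_n\|^2 + \|A^{1/2}y_n\|^2 = o(1)$ against \eqref{5.8b}.

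The first step is to derive two vanishing pairings of resolvent quantities. Using $\alpha - \beta/2 \le \gamma/2$ (which holds throughout $R_3(\gamma)$ since $\alpha \le (1\vee\gamma)/2$ and $\beta \ge 0$), together with the a priori bounds $\|v_n\| = O(1)$ and $\|\lambda_n^{-1} A^{\gamma/2} v_n\| = O(1)$ (the latter from \eqref{3.30a} and \eqref{5.7}), interpolation yields $\|\lambda_n^{-\mu/2} A^{\alpha-\beta/2} v_n\| = O(1)$. Combined with \eqref{5.8c} via Cauchy--Schwarz, this gives $\lan \lambda_n^{-\mu} A^\alpha w_n, v_n\ran = o(1)$. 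An analogous argument, starting from \eqref{3.30c} in place of \eqref{3.30a}, produces $\|\lambda_n^{-\mu/2} A^{1-\beta/2} y_n\| = O(1)$ and hence $\lan \lambda_n^{-\mu} A y_n, w_n\ran = o(1)$. Substituting these into \eqref{5.82}--\eqref{5.83} reduces the problem to proving $\|A^{1/2}y_n\| = o(1)$ and $\lan \lambda_n^{-\mu} A^\gamma u_n, v_n\ran = o(1)$.

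For the main cancellation I will mimic the auxiliary-element trick from the proof of Proposition \ref{main2}. Applying $\lambda_n^{\mu-1} A^{\alpha-\beta}$ to \eqref{3.30d} (or a closely related exponent depending on the sign of $\gamma - 1$) and adding it to $\lambda_n^{\mu-1}$ times \eqref{3.30b} eliminates the leading damping term $A^\beta w_n$. Introducing $\theta_n := u_n + A^{1+\alpha-\beta-\gamma} y_n$ and using \eqref{5.8d}--\eqref{5.8e} together with the condition $2\alpha+\beta > \gamma\vee(2-\gamma)$, one expects $\|A^{\gamma/2}\theta_n\| = O(1)$ and $\|\lambda_n^{-1} A^\gamma \theta_n\| = O(1)$. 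Interpolating between these at the exponent determined by $\mu$ then yields the analogue of \eqref{3.50}, namely $\|\lambda_n^{-1+\mu/2} A^{\gamma-\alpha+\beta/2} u_n\| = O(1)$. Pairing this with the improved bound $\|\lambda_n^{-\mu/2} A^{\alpha-\beta/2} v_n\| = o(1)$ — which I plan to extract by taking the inner product of \eqref{3.30d} with $A^{\alpha-\beta} v_n$ and feeding in the $o(1)$-estimates already in hand, exactly as in \eqref{5.222}--\eqref{5.223} — finally produces $\lan \lambda_n^{-\mu} A^\gamma u_n, v_n\ran = o(1)$, which closes the argument.

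The principal obstacle will be the case split $\gamma \le 1$ versus $\gamma > 1$, since $\gamma\vee(2-\gamma)$ switches expression at $\gamma = 1$ and this alters which of $2\alpha+\beta > \gamma$ or $2\alpha+\beta > 2-\gamma$ is the binding inequality dictating the magnitude of $\mu$. In each subcase the fractional exponents of $A$ appearing in the interpolations must be recomputed to respect the modified constraint; the most delicate point is verifying that the matching between \eqref{3.30b} and \eqref{3.30d} still yields a uniform bound on $\lambda_n^{-1} A^\gamma \theta_n$ in the regime $\beta < \gamma/2$, which requires a careful choice of the auxiliary power $A^{\alpha-\beta}$ applied to \eqref{3.30d} so that the resulting cancellation is consistent with the sub-threshold bound on $\beta$ available here, in contrast to the super-threshold bound $\beta \ge (1-\gamma/2)\vee(\gamma/2)$ exploited in $R_2(\gamma)$.
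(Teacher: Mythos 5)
Your overall framework (contradiction via Lemma \ref{lemma:important}, then a frequency-domain resolvent estimate with $\mu=\frac{2(2\alpha+\beta-(\gamma\vee(2-\gamma)))}{\gamma}$) matches the paper, but the two structural choices you commit to are imported from the $R_2(\gamma)$ proof and fail precisely because $R_3(\gamma)$ sits on the opposite side of the threshold $\beta=(1-\frac{\gamma}{2})\vee\frac{\gamma}{2}$. First, the auxiliary element $\theta_n:=u_n+A^{1+\alpha-\beta-\gamma}y_n$ satisfies $\|A^{\gamma/2}\theta_n\|=O(1)$ only if $\|A^{1+\alpha-\beta-\gamma/2}y_n\|=O(1)$, i.e.\ only if $\alpha-\beta\le\frac{\gamma-1}{2}$; in $R_3(\gamma)$ with $\beta<\frac{\gamma}{2}$ this can fail badly (e.g.\ $\gamma=1.2$, $\alpha=0.6$, $\beta=0.1$). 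The paper's proof instead takes $\theta_n:=au_n-A^{\alpha-\gamma}w_n$, which is bounded in the $A^{\gamma/2}$-norm exactly because $\alpha\le\frac{\gamma}{2}$, and for which $\lambda_n^{-1}A^{\gamma}\theta_n=-iv_n+o(1)$ by \eqref{3.30b}. Second, testing \eqref{3.30d} against $A^{\alpha-\beta}v_n$ and repeating \eqref{5.222}--\eqref{5.223} requires $\|\lambda_n^{-\mu/2}A^{1+\alpha-\frac{3\beta}{2}}u_n\|=O(1)$, which in $R_2(\gamma)$ was extracted from $\beta\ge(1-\frac{\gamma}{2})\vee\frac{\gamma}{2}$; with $\beta$ below that threshold the exponent $1+\alpha-\frac{3\beta}{2}$ is out of reach. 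The paper instead tests against $A^{\alpha+\beta-\gamma}v_n$ (resp.\ $A^{\alpha+\beta+\gamma-2}v_n$ for $\gamma\le1$), producing the square term $\|\lambda_n^{-\mu/2}A^{\mu\gamma/4}v_n\|^2$ and cross terms handled by a separate chain of estimates (\eqref{4.11}--\eqref{4.114}). You flag this "careful choice of the auxiliary power" as the delicate point but do not supply it, and it is the heart of the proof, so the proposal has a genuine gap.

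Two of your preliminary interpolation claims are also too strong. The bound $\|\lambda_n^{-\mu/2}A^{\alpha-\beta/2}v_n\|=O(1)$ obtained by interpolating between $\|v_n\|$ and $\|\lambda_n^{-1}A^{\gamma/2}v_n\|$ would require $-\frac{\mu}{2}\le\frac{\beta-2\alpha}{\gamma}$, i.e.\ $\beta\ge\frac{\gamma}{2}$, which is excluded in $R_3(\gamma)$; only the weaker $\|\lambda_n^{-1+\mu/2}A^{\alpha-\beta/2}v_n\|=O(1)$ (the paper's \eqref{9.261}) is available, and it does suffice once \eqref{5.82} is divided by $\lambda_n^{1-\mu}$. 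Similarly $\|\lambda_n^{-\mu/2}A^{1-\beta/2}y_n\|=O(1)$ does not follow from the available bounds $\|A^{1/2}y_n\|=O(1)$ and $\|\lambda_n^{-1}Ay_n\|=O(1)$ for general points of $R_3(\gamma)$; the paper instead interpolates to $\|\lambda_n^{-1+\mu/2}A^{1-\mu/4}y_n\|=O(1)$ and pairs it with $\lambda_n^{-\mu/2}A^{\mu/4}w_n$, using $\frac{\mu}{4}\le\frac{\beta}{2}$. These slips are repairable, but together with the unresolved choice of $\theta_n$ and test power they mean the proposal does not yet constitute a proof.
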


\begin{proof}
 By Lemma \ref{lemma:method}, it suffices to prove \eqref{2.7} for $\m=\frac{2(2\alpha+\beta-(\gamma\vee (2-\gamma)))}{\g}\in (0,1)$. Let $(\a,\b,\g)\in R_3(\gamma)$, as shown in Figure~\ref{fig:r3}. If \eqref{2.7} fails, by Lemma \ref{lemma:important}, we have \eqref{5.7}-\eqref{5.8e}. Next, we consider two cases, respectively.

 \textbf{Case (i): When $1<\gamma\le 2$.} Let $\m=\frac{2(2\alpha+\beta-\gamma)}{\gamma}\in (0,1)$ and 
$$   R_3(\gamma)=\Big\{(\a,\b,\g)\in  R(\g)\bigm| 2\a+\b - \g > 0, \; \a \le  \frac{\gamma}{2},\;  \b<\frac{\gamma}{2} \Big\}. $$
Since $0<\alpha-\frac{\beta}{2}<\frac{\g}{2}, \;\alpha {\color{red} \le } \frac{\gamma}{2}$, by interpolation we have
\begin{align}\label{9.261}
   \|\lambda_n^{-1+\frac{\m}{2}}A^{\alpha-\frac{\beta}{2}}v_n\| \leq  \|\lambda_n^{\frac{\beta-2\alpha}{\gamma}}A^{\alpha-\frac{\beta}{2}}v_n\|\leq \|\lambda_n^{-1}A^{\frac{\g}{2}}v_n\|^{\frac{2\alpha-\beta}{\gamma}}\|v_n\|^{1-\frac{2\alpha-\beta}{\gamma}}=O(1).
\end{align}
Then \begin{align}\label{10.163}
    \lan \l_n^{-1}  A^{\alpha}w_n,v_n\ran\leq \|\lambda_n^{-1+\frac{\m}{2}}A^{\alpha-\frac{\beta}{2}}v_n\|\|\lambda_n^{-\frac{\m}{2}}A^{\frac{\beta}{2}}w_n\|=o(1).
\end{align}
Since $\alpha\leq \frac{\g}{2}$, it follows from \eqref{3.30a} that 
\begin{align}\label{10.16}
    \|A^{\alpha}u_n\|, \;\; \|\lambda_n^{-1}A^{\alpha}v_n\|=O(1).
\end{align}
If $\beta \ge \frac{1}{2}$, then \eqref{3.42}–\eqref{3.43} remain valid. In this case, combining \eqref{5.8c} and \eqref{3.43}, we obtain
\begin{align*}
    \lan \l_n^{-1} Ay_n,w_n \ran = \lan\l_n^{-1+\frac{\mu}{2}} A^{1-\frac{\b}{2}}y_n,\l_n^{-\frac{\mu}{2}}A^\frac{\b}{2} w_n \ran = o(1). 
\end{align*}
If instead $\beta < \frac{1}{2}$, then \eqref{5.8e} implies that $\|\lambda_n^{-1}A^{\beta}w_n\|\le O(1)$. Using this, together with \eqref{5.8c}, \eqref{3.30d} and \eqref{10.16}, we deduce that
\begin{align}\label{10.161}
    \|\lambda_n^{-1}Ay_n\|=O(1).
\end{align}
By interpolation, it follows that
\begin{align}\label{10.162}
    \|\lambda_n^{-1+\frac{\mu}{2}}A^{1-\frac{\mu}{4}}y_n\|\leq \|\lambda_n^{-1}Ay\|^{1-\frac{\mu}{2}}\|A^{\frac{1}{2}}y_n\|^{\frac{\mu}{2}}=O(1).
\end{align}
Thus, we obtain
\be\label{9.26} \lan \l_n^{-1} Ay_n,w_n \ran = \lan\l_n^{-1+\frac{\mu}{2}} A^{1-\frac{\mu}{4}}y_n,\;\l_n^{-\frac{\mu}{2}}A^\frac{\mu}{4} w_n \ran = o(1), 
\ee
where we use $\frac{\m}{4}\leq \frac{\beta}{2}$ and \eqref{5.8c}.
Therefore,  combining \eqref{5.83} and \eqref{9.26}, we conclude that 
\be\label{9.25}  \|A^\frac{1}{2}y_n\|=o(1). \ee

Next, let $\theta_n:=au_n-A^{\alpha-\gamma}w_n$. By $\alpha  \le\frac{\g}{2}$ and \eqref{3.30b}, we have
\begin{equation}
\label{4.100}
\|A^{\frac{\g}{2}}\theta_n\|,\;\;
\|\lambda_n^{-1}A^{\g}\theta_n\|=O(1).
\end{equation}
Then by interpolation,
\begin{align}\label{4.10}
    \|\lambda_n^{-\frac{\mu}{2}}A^{\alpha+\frac{\beta}{2}}\theta_n\|, \;\;\|\l_n^{-1+\frac{\mu}{2}}A^{\gamma-\frac{\mu\gamma}{4}}\theta_n\| =O(1).
\end{align}
Recalling that \eqref{5.82} yields
\begin{align*}
    i\|v_n\|^2+a\lan \l_n^{-1} A^{\gamma}\theta_n,v_n\ran=o(1).
\end{align*}
If we have \begin{align}\label{4.112}
    \|\lambda_n^{-\frac{\mu}{2}}A^{\frac{\mu\gamma}{4}}v_n\|=o(1),
\end{align}
then by \eqref{4.10}, we get 
$$\lan \l_n^{-1} A^{\gamma}\theta_n,v_n\ran=\lan\l_n^{-\frac{\mu}{2}}A^{\frac{\mu\gamma}{4}}v_n,\l_n^{-1+\frac{\mu}{2}}A^{\gamma-\frac{\mu\gamma}{4}}\theta_n \ran=o(1),$$
which implies
\begin{align*}
    \|v_n\|=o(1),
\end{align*}
This, together with \eqref{9.25} yields $\|v_n\|^2+\|A^{1\over2}y_n\|^2=o(1)$, which contradicts to \eqref{5.8b}.

Finally, we prove that \eqref{4.112} holds.
Acting $A^{\alpha+\beta-\gamma}v_n$ on \eqref{3.30d} yields
\begin{align}\label{9.262}
    \lan i\lambda_n^{-\mu+1}w_n,A^{\alpha+\beta-\gamma}v_n \ran+ \lan \lambda_n^{-\mu}Ay_n,A^{\alpha+\beta-\gamma}v_n \ran+\lan \lambda_n^{-\mu}A^{\beta}w_n,A^{\alpha+\beta-\gamma}v_n \ran+\|\lambda_n^{-\frac{\mu}{2}}A^{\frac{\mu\gamma}{4}}v_n\|^2=o(1).
\end{align}
We now show that the first three terms on the left-hand side of \eqref{9.262} vanish as $n\to\infty$, which will imply \eqref{4.112}.

For the first term of \eqref{9.262}, note that $\alpha+\beta-\gamma <0$, taking the inner product of \eqref{3.30b} with $A^{\alpha+\beta-\gamma }w_n$ on $H$ yields
\begin{align*}
    i\lan\lambda_n^{-\mu+1}v_n,A^{\alpha+\beta-\gamma }w_n\ran+\lan\lambda_n^{-\frac{\mu}{2}}A^{\alpha+\frac{\beta}{2}}\theta_n,\lambda_n^{-\frac{\mu}{2}}A^{\frac{\beta}{2}}w_n\ran=o(1).
\end{align*}
By \eqref{5.8c} and \eqref{4.10}, we see 
$$\lan\lambda_n^{-\frac{\mu}{2}}A^{\alpha+\frac{\beta}{2}}\theta_n,\lambda_n^{-\frac{\mu}{2}}A^{\frac{\beta}{2}}w_n\ran\le\|\lambda_n^{-\frac{\mu}{2}}A^{\alpha+\frac{\beta}{2}}\theta_n\|\|\lambda_n^{-\frac{\mu}{2}}A^{\frac{\beta}{2}}w_n\|=o(1).$$
Thus, we obtain
\begin{align}\label{4.9}
    \lan\lambda_n^{-\mu+1}v_n,A^{\alpha+\beta-\gamma }w_n\ran=o(1).
\end{align}

For the second term of \eqref{9.262}, let $$p:=\frac{\mu}{2}+\frac{2-2\gamma}{\g}.$$ If $\alpha+\frac{\b}{2}-\g+1\le \frac{\gamma}{2}$, then by \eqref{4.100},
\begin{align}\label{4.11}
    \|\l_n^{-p}A^{\alpha+\frac{\b}{2}-\g+1}\theta_n\| =O(1).
\end{align}
 If instead $\frac{\gamma}{2}<\alpha+\frac{\b}{2}-\g+1<\g$,  then \eqref{4.11} still holds by interpolation and \eqref{4.100}.
Now, observe that $1+\alpha+\beta-\frac{3\gamma}{2}<\frac{1}{2}$ and $\alpha+\beta-\gamma+\frac{1}{2}<\frac{\gamma}{2}$, by\eqref{5.8c}, \eqref{3.30a}, \eqref{3.30c} and \eqref{4.11}, we get
\begin{align}\label{4.113}
\begin{array}{lll}
      \lan \lambda_n^{-\mu}Ay_n,A^{\alpha+\beta-\gamma}v_n \ran&=-i \lan A^{1+\alpha+\beta-\frac{3\gamma}{2}}y_n,\lambda_n^{-\mu+1}A^{\frac{\gamma}{2}}u_n \ran+o(1)\\  \noalign{\medskip}
     &=-i\lan \lambda_n^{-\mu+1}A^{\frac{1}{2}}y_n,A^{\alpha+\beta-\gamma+\frac{1}{2}}u_n \ran+o(1)\\  \noalign{\medskip}
      &=-\lan \lambda_n^{-\mu}A^{\frac{1}{2}}w_n,A^{\alpha+\beta-\gamma+\frac{1}{2}}u_n \ran+o(1)\\ \noalign{\medskip}
       &=-\lan \lambda_n^{-\mu+p}A^{\frac{\beta}{2}}w_n,\lambda_n^{-p}A^{\alpha+\frac{\beta}{2}-\gamma+1}u_n \ran+o(1)\\ \noalign{\medskip}
       &\displaystyle
       =- {1\over a}\lan \lambda_n^{-\mu+p}A^{\frac{\beta}{2}}w_n,\lambda_n^{-p}A^{\alpha+\frac{\beta}{2}-\gamma+1}\theta_n \ran+ {1\over a}\|\lambda_n^{-\frac{\mu}{2}}A^{\a+\frac{\beta+1}{2}-\gamma}w_n\|^2+o(1)\\ \noalign{\medskip}
       &\displaystyle
       \leq {1\over a} \|\lambda_n^{-\mu+p}A^{\frac{\beta}{2}}w_n\|\|\lambda_n^{-p}A^{\alpha+\frac{\beta}{2}-\gamma+1}\theta_n\|+ {1\over a}\|\lambda_n^{-\frac{\mu}{2}}A^{\a+\frac{\beta+1}{2}-\gamma}w_n\|^2+o(1)\\ \noalign{\medskip}
       &=o(1).
\end{array}
\end{align}
In the last step of \eqref{4.113}, we use \eqref{5.8c} and \eqref{4.11} since $-\mu+p<-\frac{\mu}{2}$ and $\alpha+\frac{\beta+1}{2}-\gamma<\frac{\b}{2}$.

For the third term of \eqref{9.262}, note that if $\alpha+\frac{3\beta}{2}-\gamma<0$, then clearly 
\begin{align}\label{4.115}
    \| \lambda_n^{-\frac{\mu}{2}}A^{\alpha+\frac{3\beta}{2}-\gamma}v_n\|=O(1).
\end{align}
If instead $0<\alpha+\frac{3\beta}{2}-\gamma<\frac{\gamma}{2}$, by interpolation we have
\begin{align*}
    \|\lambda_n^{-\frac{2\alpha+3\beta-2\gamma}{\g}}A^{\alpha+\frac{3\beta}{2}-\gamma}v_n\|\leq \|\lambda^{-1}A^{ \color{red}\frac{\g}{2}} v_n\|^{\frac{2\alpha+3\beta-2\gamma}{\g}}\| {\color{red}v_n}\|^{1-\frac{2\alpha+3\beta-2\gamma}{\g}}=O(1).
\end{align*}
Since $-\frac{2\alpha+3\beta-2\gamma}{\g}>-\frac{\mu}{2}$, then \eqref{4.115} also holds and
\begin{align}\label{4.114}
    \lan \lambda_n^{-\mu}A^{\beta}w_n,A^{\alpha+\beta-\gamma}v_n \ran\le \|\lambda_n^{-\frac{\mu}{2}}A^{\frac{\beta}{2}}w_n\|\|\lambda_n^{-\frac{\mu}{2}}A^{\alpha+\frac{3\beta}{2}-\gamma}v_n\|=o(1).
\end{align}
 Therefore, we conclude \eqref{4.112} from \eqref{9.262}, \eqref{4.9}, \eqref{4.113}, and \eqref{4.114}.

 \textbf{Case (ii): When $\frac{1}{2}\le \gamma  \le  1$.} Let $\m=\frac{2(2\alpha+\beta+\gamma-2)}{\g}$ and 
$$   R_3(\gamma)=\Big\{(\a,\b,\g)\in  R(\g) \bigm|\a\le \frac{1}{2},\; \beta<1-\frac{\gamma}{2},\;2\alpha+\beta+\g-2>0\Big\}. $$
Note that in this case we also have  $-1+\frac{\m}{2}\le\frac{\beta-2\alpha}{\gamma}$ and $0<\alpha-\frac{\beta}{2}<\frac{\g}{2}$, then \eqref{9.261} and \eqref{10.163} still hold. 
Since $\gamma\le 1$, applying $A^{\frac{\gamma}{2}-\frac{1}{2}}$ to \eqref{3.30d} yields
$$iA^{\frac{\gamma}{2}-\frac{1}{2}}w_n+\lambda_n^{-1}A^{\frac{\gamma}{2}+\frac{1}{2}}y_n+\lambda_n^{-1}A^{\alpha+\frac{\gamma}{2}-\frac{1}{2}}v_n+\lambda_n^{-1}A^{\beta+\frac{\gamma}{2}-\frac{1}{2}}w_n=o(1).$$
Since $\alpha\le \frac{1}{2}, \;\beta < 1-\frac{\g}{2} $, it follows that 
\begin{align}
    \|\lambda_n^{-1}A^{\frac{\g}{2}+\frac{1}{2}}y_n\|=O(1).
\end{align}
Note that $\frac{1}{2}\le \frac{3}{2}-\alpha-\frac{\beta}{2}\le \frac{\g}{2}+\frac{1}{2}$, so by interpolation we obtain $\|\lambda_n^{-1+\frac{\mu}{2}}A^{\frac{3}{2}-\alpha-\frac{\beta}{2}}y_n\|=O(1)$ and thus
\begin{align*}
    \lan \l_n^{-1} Ay_n,\;w_n \ran = \lan\l_n^{-1+\frac{\mu}{2}} A^{\frac{3}{2}-\alpha-\frac{\beta}{2}}y_n,\;\l_n^{-\frac{\mu}{2}}A^{\alpha+\frac{\beta}{2}-\frac{1}{2}} w_n \ran = o(1),
\end{align*}
Therefore, by \eqref{5.83}, we conclude that
\be \label{5.321} \|A^\frac{1}{2}y_n\|=o(1). \ee

Since $\alpha\le \frac{1}{2}$, by \eqref{3.30b} we see $\|\lambda_n^{-1}A^{\gamma}u_n\|=O(1)$. Note that $\frac{\g}{2}{<} \alpha+\frac{\beta}{2}+\g-1{<} \gamma$, by interpolation we obtain the following estimates:
\begin{align}
   \|\l_n^{-1+\frac{\mu}{2}} A^{1-\alpha-\frac{\beta}{2}+\frac{\g}{2}}u_n\|&=O(1),\label{5.23}\\
   \|\l_n^{-\frac{\mu}{2}} A^{\alpha+\frac{\beta}{2}+\g-1}u_n\|&=O(1).\label{5.24}
\end{align}
If
\begin{align}\label{5.19*}
    \|\l_n^{-\frac{\m}{2}}A^{\frac{\g}{2}+\alpha+\frac{\beta}{2}-1}v_n\|=o(1),
\end{align}
then by \eqref{5.23},
\begin{align*}
    \lan \l_n^{-1} A^\g u_n,\;v_n \ran = \lan \l_n^{-1+\frac{\m}{2}} A^{1-\alpha-\frac{\beta}{2}+\frac{\g}{2}}u_n,
\;\l_n^{-\frac{\m}{2}} A^{\frac{\g}{2}+\alpha+\frac{\beta}{2}-1}v_n \ran = o(1).
\end{align*}
 This, together with \eqref{5.82} and \eqref{10.163} implies $\|v_n\|=o(1)$, 
 which contradicts to \eqref{5.8b} by \eqref{5.321}.

It suffices to show \eqref{5.19*} holds. 
Since $\a <1-\frac{\g}{2},\;\b <1-\frac{\g}{2}$, taking the inner product of \eqref{3.30d} with $A^{\alpha+\beta+\gamma-2}v_n$, we get 
\begin{align}\label{5.222*}
  &  i\lan\lambda_n^{-\m+1}w_n,\;A^{\alpha+\beta+\gamma-2}v_n\ran+\lan\lambda_n^{-\m}y_n,\;A^{\alpha+\beta+\gamma-1}v_n\ran+\|\l_n^{-\frac{\m}{2}}A^{\frac{\g}{2}+\alpha+\frac{\beta}{2}-1}v_n\|^2 \notag
    \\ & +\lan\lambda_n^{-\m}w_n,\;A^{\alpha+2\beta+\gamma-2}v_n\ran=o(1).
\end{align}
Taking the inner product of \eqref{3.30b} with $A^{\alpha+\beta+\gamma-2 }w_n$ on $H$, we have
$$i\lan  \l_n^{-\m+1}v_n, A^{\alpha+\beta+\gamma-2}w_n\ran+a\lan\l_n^{-\mu}A^{\gamma}u_n,A^{\alpha+\beta+\gamma-2}w_n\ran-\|\lambda_n^{-\frac{\m}{2}}A^{\alpha+\frac{\beta}{2}+\frac{\g}{2}-1}w_n\|^2=o(1).$$
It is easy to see that $\|\lambda_n^{-\frac{\m}{2}}A^{\alpha+\frac{\beta}{2}+\frac{\g}{2}-1}w_n\|=o(1)$ in the above equation since $\alpha+\frac{\beta}{2}+\frac{\g}{2}-1\leq \frac{\beta}{2}$. By \eqref{5.8c}, \eqref{5.24} and $\frac{\b}{2}+\g-1\le\frac{\beta}{2}$,
$$\lan\l_n^{-\mu}A^{\gamma}u_n,\;A^{\alpha+\beta+\gamma-2}w_n\ran\leq \|\lambda_n^{-\frac{\m}{2}}A^{\alpha+\frac{\beta}{2}+\g-1}u_n\|\|\lambda_n^{-\frac{\m}{2}}A^{\frac{\b}{2}+\g-1}w_n\|=o(1).$$
Thus,
\begin{align}\label{5.205*}
   i\lan\l_n^{-\m+1}A^{\alpha+\beta+\gamma-2} v_n,\; w_n\ran=o(1).
\end{align}

On the other hand, since $\alpha+\beta+\gamma-\frac{3}{2} \le {\gamma \over2}$,
by \eqref{5.8c}, \eqref{3.30a}, \eqref{3.30c}  and \eqref{5.24}, we obtain
\begin{align}\label{5.223*}
\begin{array}{lll}
     \lan\lambda_n^{-\m}Ay_n,\;A^{\alpha+\beta+\gamma-2}v_n\ran&=\lan A^{\frac{1}{2}}y_n,\;\lambda_n^{-\m}A^{\alpha+\beta+\gamma-\frac{3}{2}}v_n\ran\\ \noalign{\medskip}
    &=-i\lan A^{\frac{1}{2}}y_n,\;\lambda_n^{1-\m}A^{\alpha+\beta+\gamma-\frac{3}{2}}u_n\ran+o(1)\\ \noalign{\medskip}
    &=-\lan \lambda_n^{-\m}A^{\frac{1}{2}}w_n,\;A^{\alpha+\beta+\gamma-\frac{3}{2}}u_n\ran+o(1)\\ \noalign{\medskip}
   & \leq \|\lambda_n^{-\frac{\m}{2}}A^{\frac{\beta}{2}}w_n\|\|\lambda_n^{-\frac{\m}{2}}A^{\alpha+\frac{\beta}{2}+\gamma-1}u_n\|+o(1)\\  \noalign{\medskip}
    &=o(1). 
\end{array} 
\end{align}
Since  
$\alpha+\frac{3\beta}{2}+\gamma-2\le \alpha+\frac{\beta}{2}+\frac{\g}{2}-1 $ and  
$0\le  \alpha+\frac{\beta}{2}+\frac{\g}{2}-1\leq \frac{\gamma}{2}$, 
by interpolation and \eqref{5.8b}, \eqref{3.30a}, we get $$\|\lambda_n^{-\frac{\m}{2}}A^{\alpha+\frac{3\beta}{2}+\gamma-2}v_n\|,\;\;
\|\lambda_n^{-\frac{\m}{2}}A^{\alpha+\frac{\beta}{2}+\frac{\g}{2}-1}v_n\|=O(1).$$
Then
\begin{align*}
    \lan\lambda_n^{-\m}w_n,\;A^{\alpha+2\beta+\gamma-2}v_n\ran\leq \|\lambda_n^{-\frac{\m}{2}}A^{\frac{\beta}{2}}w_n\|\|\lambda_n^{-\frac{\m}{2}}A^{\alpha+\frac{3\beta}{2}+\gamma-2}v_n\|=o(1).
\end{align*}
This, together with \eqref{5.222*}, \eqref{5.205*}, and \eqref{5.223*} implies \eqref{5.19*} holds.
\end{proof}

\begin{proposition}\label{main4}
 The  semigroup $e^{\cA t}$ is of Gevrey class $\delta>\frac{\a}{\beta}$  in 
  $$R_4(\gamma)=\Big\{(\a,\b,\g)\in  R(\g) \bigm|\a>\beta,\;\a>\frac{\g}{2},\;\beta>\frac{4\alpha^2-2\alpha\gamma}{4\alpha-\gamma-1}\Big\}, \quad 1<\gamma\le 2.$$
\end{proposition}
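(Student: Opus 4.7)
The plan is to apply Lemma~\ref{lemma:method}(ii) with $\mu = \beta/\alpha$, which lies in $(0,1)$ since $\alpha > \beta > 0$ in $R_4(\gamma)$. Arguing by contradiction, if \eqref{2.7} fails, then Lemma~\ref{lemma:important} supplies a sequence $(\lambda_n, U_n)_{n\ge 1} \subseteq \dbR \times \cD_0$ satisfying \eqref{5.7}--\eqref{5.8e} together with the resolvent identities \eqref{3.30a}--\eqref{3.30d}. Following the template of Propositions~\ref{main1}--\ref{main3}, the ultimate goal is to produce $\|v_n\|^2 + \|A^{1/2}y_n\|^2 = o(1)$, contradicting \eqref{5.8b}.

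The first step is to exploit $\alpha > \gamma/2$: acting on \eqref{3.30b} by $A^{\gamma/2 - \alpha}$ and using \eqref{5.8d}--\eqref{5.8e} to absorb the $A^\gamma u_n$ term, I expect to extract useful bounds such as $\|\lambda_n^{-1}A^{\gamma/2}w_n\| = O(1)$ and, by exploiting \eqref{3.30d} with an appropriate test vector, control on $\|\lambda_n^{-1}A^\alpha w_n\|$. The choice $\mu = \beta/\alpha$ is dictated by the interpolation $\|A^{\beta/2}w_n\| \le \|A^\alpha w_n\|^{\beta/(2\alpha)}\|w_n\|^{1-\beta/(2\alpha)}$ in Lemma~2.3, which is precisely matched to the dissipation estimate $\|\lambda_n^{-\beta/(2\alpha)}A^{\beta/2}w_n\| = o(1)$ coming from \eqref{5.8c}. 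This matching is what distinguishes the region $R_4(\gamma)$ from the earlier regions, where interpolation against $A^{\gamma/2}$ was the natural conjugate.

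Using the resulting estimates, I would test \eqref{3.30b} against $v_n$ and \eqref{3.30d} against a suitable $A^\sigma v_n$ (with $\sigma = \alpha - \beta$ a natural first candidate, in the spirit of \eqref{5.222}), then swap $v_n \leftrightarrow u_n$ and $w_n \leftrightarrow y_n$ modulo an extra $\lambda_n^{-1}$ via \eqref{3.30a}, \eqref{3.30c}. The aim is to show $\lan \lambda_n^{-\mu}A^\alpha w_n, v_n \rangle = o(1)$, $\lan \lambda_n^{-1}A^\gamma u_n, v_n\rangle = o(1)$, and $\lan \lambda_n^{-1}Ay_n, w_n\rangle = o(1)$, after which \eqref{5.82} and \eqref{5.83} yield $\|v_n\| = o(1)$ and $\|A^{1/2}y_n\| = o(1)$, closing the contradiction.

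The main obstacle is the exponent-matching in the inner-product tests above. Several interpolation weights must simultaneously lie in the admissible ranges demanded by Lemma~2.3: the $A^\alpha$ weight on $w_n$, the $A^{\gamma/2}$ ceiling on $u_n$ and $v_n$ coming from $\|U_n\|_\cH = 1$, and the dissipation weight $\mu/2 = \beta/(2\alpha)$. Eliminating the intermediate exponents, the compatibility reduces algebraically to exactly the inequality $\beta > \frac{4\alpha^2 - 2\alpha\gamma}{4\alpha - \gamma - 1}$, the curved face of $R_4(\gamma)$ singled out as ``unexpected'' in Remark~\ref{remark:eigenvalues}. Verifying that this condition is precisely what makes every interpolation legitimate is the technical heart of the proof; once it is in place, the remaining steps follow the contradiction-and-interpolation pattern already established in Propositions~\ref{main1}--\ref{main3}.
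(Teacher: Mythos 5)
Your high-level frame is right --- $\mu=\beta/\alpha$, the contradiction setup via Lemma~\ref{lemma:important}, the goal $\|v_n\|^2+\|A^{1/2}y_n\|^2=o(1)$, and the observation that the curved face $\beta>\frac{4\alpha^2-2\alpha\gamma}{4\alpha-\gamma-1}$ is the exponent-bookkeeping constraint --- but the concrete steps you propose would not go through, and the devices that make the paper's proof work are absent. First, acting $A^{\gamma/2-\alpha}$ on \eqref{3.30b} does \emph{not} yield $\|\lambda_n^{-1}A^{\gamma/2}w_n\|=O(1)$: the resulting term $a\lambda_n^{-1}A^{3\gamma/2-\alpha}u_n$ has order $3\gamma/2-\alpha>\gamma/2$ (since $\alpha<\gamma$ in $R_4(\gamma)$), and no bound on $u_n$ above the level $A^{\gamma/2}$ is available at that stage. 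Likewise, your plan to prove $\langle\lambda_n^{-1}A^\gamma u_n,v_n\rangle=o(1)$ and $\langle\lambda_n^{-\mu}A^\alpha w_n,v_n\rangle=o(1)$ as \emph{separate} statements cannot be executed in $R_4(\gamma)$, precisely because $\alpha>\gamma/2$ means the $u$-component is only controllable through the $w$-component. The paper resolves both issues with the combination $\theta_n:=aA^{\gamma-\alpha}u_n-w_n$, for which $\|\theta_n\|=O(1)$ (using $\gamma-\alpha\le\gamma/2$) and $\|\lambda_n^{-1}A^\alpha\theta_n\|=O(1)$ follow directly from \eqref{3.30b}; interpolation is then performed on $\theta_n$, and only the combined quantity $\langle\lambda_n^{-1}A^\alpha\theta_n,v_n\rangle$ is shown to vanish. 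A parallel combination ($\eta_n:=A^{1/2}y_n+A^{\beta-1/2}w_n$, $\xi_n:=A^{1-\beta/2}y_n+A^{\beta/2}w_n$) is needed on the $y$-equation side.

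Second, you have misplaced where the curved-boundary condition actually bites: it is not in the inner-product tests against $v_n$, but in the proof that $\langle\lambda_n^{-1}Ay_n,w_n\rangle=o(1)$ (hence $\|A^{1/2}y_n\|=o(1)$ via \eqref{5.83}). That step requires a case split on $\beta\le\frac12$ versus $\beta>\frac12$ and, in each case, an interpolation weight $\frac{1-\beta}{1+\gamma-2\alpha}$ on $A^{1-\beta/2}y_n$ whose compatibility with the dissipation weight is exactly the inequality $-1+\frac{1-\beta}{1+\gamma-2\alpha}<-\frac{\mu}{2}$, i.e.\ $\beta>\frac{4\alpha^2-2\alpha\gamma}{4\alpha-\gamma-1}$. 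Your proposal contains no mechanism for this estimate. Finally, a smaller but real issue: since $\alpha>\beta$ in $R_4(\gamma)$, the correct multiplier for \eqref{3.30d} is $A^{\beta-\alpha}v_n$ (producing the target quantity $\|\lambda_n^{-\mu/2}A^{\beta/2}v_n\|^2$, which pairs with $\|\lambda_n^{-1+\mu/2}A^{\alpha-\beta/2}\theta_n\|=O(1)$), not your candidate $A^{\alpha-\beta}v_n$, which produces the strictly stronger norm $\|\lambda_n^{-\mu/2}A^{\alpha-\beta/2}v_n\|^2$ that the available bounds do not control.
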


\begin{figure}[t]
\centering
\begin{subfigure}{0.49\textwidth}
\centering
\includegraphics[width=0.5\textwidth]{./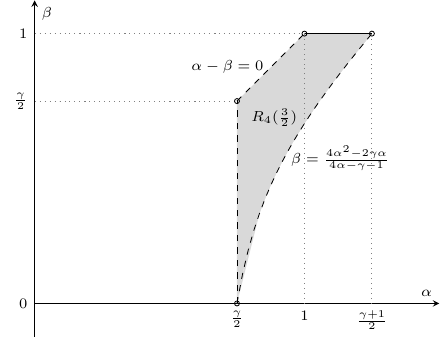}
\caption{$\gamma=\frac{3}{2}$}
\end{subfigure}~
\begin{subfigure}{0.49\textwidth}
\centering
\includegraphics[width=0.5\textwidth]{./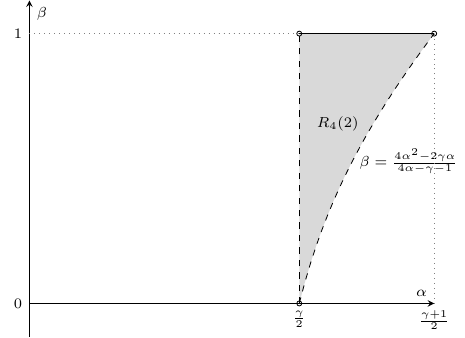}
\caption{$\gamma=2$}
\end{subfigure}
\caption{Visualization of $R_4(\gamma)$ when $\gamma=\frac{3}{2},2$.}
\label{fig:r4}
\end{figure}
\begin{proof}
 By Lemma \ref{lemma:method}, it suffices to prove \eqref{2.7} for $\mu=\frac{\beta}{\a}\in (0,1)$.   Let $(\a,\b,\g)\in R_4(\gamma)$, as shown in Figure~\ref{fig:r4}.  By Lemma \ref{lemma:important},  if \eqref{2.7} fails, then we have \eqref{5.7}-\eqref{5.8e}.  
Since $\alpha>\frac{\gamma}{2}$, it follows from \eqref{3.30d} that
\begin{align}\label{5.161}
    iA^{-\a+\frac{\gamma}{2}}w_n+\lambda_n^{-1}A^{1-\alpha+\frac{\gamma}{2}}y_n+\lambda_n^{-1}A^{\frac{\gamma}{2}}v_n+\lambda_n^{-1}A^{\beta-\alpha+\frac{\gamma}{2}}w_n=o(1).
\end{align}
{If $\beta \leq \frac{1}{2}$}, let $\eta_n:=A^{\frac{1}{2}}y_n+A^{\beta-\frac{1}{2}}w_n$,  then $$\|\eta_n\|,\;\;\|\lambda_n^{-1}A^{\frac{1+\gamma}{2}-\alpha}\eta_n\|=O(1).$$
Since $\alpha-\frac{\beta}{2}<\frac{\gamma}{2}$, we observe that $0\leq \frac{1}{2}-\frac{\beta}{2}\leq \frac{1+\gamma}{2}-\alpha$,  by interpolation we have 
$\|\lambda_n^{-\frac{1-\beta}{1+\g-2\a}}A^{\frac{1}{2}-\frac{\beta}{2}}\eta_n\|=O(1)$.
Let $\xi_n:=A^{1-\frac{\beta}{2}}y_n+A^{\frac{\beta}{2}}w_n$, so that
\begin{align}\label{5.163}
    \|\lambda_n^{-\frac{1-\beta}{1+\g-2\a}}\xi_n\|=\|\lambda_n^{-\frac{1-\beta}{1+\g-2\a}}A^{\frac{1}{2}-\frac{\beta}{2}}\eta_n\|=O(1).
\end{align}
Furthermore, since $\beta>\frac{4\alpha^2-2\gamma\alpha}{4\alpha-\gamma-1}$, we deduce that $-1+\frac{1-\beta}{1+\g-2\a}<-\frac{\mu}{2}$. Hence, by \eqref{5.8c} and \eqref{5.163},
\begin{align}\label{6.20}
\begin{array}{lll}
    \lan\lambda_n^{-1}Ay_n,w_n \ran&=   \lan\lambda_n^{-\frac{1-\beta}{1+\g-2\a}}A^{1-\frac{\beta}{2}}y_n,\lambda_n^{-1+\frac{1-\beta}{1+\g-2\a}}A^{\frac{\beta}{2}}w_n \ran\\ \noalign{\medskip}
    &=\lan\lambda_n^{-\frac{1-\beta}{1+\g-2\a}}\xi_n,\lambda_n^{-1+\frac{1-\beta}{1+\g-2\a}}A^{\frac{\beta}{2}}w_n \ran-\lan\lambda_n^{-1+\frac{\mu}{2}}A^{\frac{\beta}{2}}w_n,\lambda_n^{-\frac{\mu}{2}}A^{\frac{\beta}{2}}w_n \ran\\ \noalign{\medskip}
    &=o(1),
\end{array}
\end{align}
where we use $-1+\frac{\mu}{2}<-\frac{\mu}{2}$.

If $\beta>\frac{1}{2}$, applying $A^{\frac{1}{2}-\b}$ to \eqref{3.30d} yields
\begin{align}\label{5.16}
    iA^{\frac{1}{2}-\b}w_n+ \l_n^{-1}A^{\frac{3}{2}-\b} y_n + \l_n^{-1}A^{\frac{1}{2}-\b+\a} v_n + \l_n^{-1}A^\frac{1}{2} w_n=o(1). 
\end{align}
If $\frac{1}{2}-\b+\a>\frac{\gamma}{2}$, acting $A^{\frac{\gamma}{2}-(\frac{1}{2}-\b+\a)}$ on \eqref{5.16} yields 
\begin{align}\label{5.211}
    \| \l_n^{-1}A^{1+\frac{\g}{2}-\a} y_n\|=O(1).
\end{align}
Note that $\frac{1}{2}<1-\frac{\beta}{2}<1+\frac{\g}{2}-\a$, by interpolation we have
\begin{align*}
    \|\lambda_n^{-\frac{1-\beta}{1+\g-2\a}}A^{1-\frac{\beta}{2}}y_n\|=O(1).
\end{align*}
Moreover, it follows from $\beta>\frac{4\alpha^2-2\gamma\alpha}{4\alpha-\gamma-1}$ that $-1+\frac{1-\beta}{1+\g-2\a}<-\frac{\mu}{2}$ and thereby
$$ \lan\lambda_n^{-1}Ay_n,w_n \ran\leq \|\lambda_n^{-\frac{1-\beta}{1+\g-2\a}}A^{1-\frac{\beta}{2}}y_n\|\|\lambda_n^{-1+\frac{1-\beta}{1+\g-2\a}}A^{\frac{\beta}{2}}w_n \|=o(1).$$
If instead $\frac{1}{2}-\b+\a\le\frac{\gamma}{2}$, then it follows from \eqref{5.8d}, \eqref{5.8e} and \eqref{5.16} that $\|\l_n^{-1}A^{\frac{3}{2}-\b} y_n\|=O(1)$. Since $\frac{1}{2}<1-\frac{\beta}{2}<\frac{3}{2}-\b$, by interpolation one has 
$\|\lambda_n^{-\frac{1}{2}}A^{1-\frac{\beta}{2}}y_n\|=O(1)$.
Note that $-1+\frac{\mu}{2}\le -\frac{1}{2}$, then
$$\lan\lambda_n^{-1}Ay_n,w_n \ran\leq \|\lambda_n^{-1+\frac{\mu}{2}}A^{1-\frac{\beta}{2}}y_n\|\|\lambda_n^{-\frac{\mu}{2}}A^{\frac{\beta}{2}}w_n\|=o(1).$$
By the above analysis and \eqref{5.83}, we  obtain
\begin{align}\label{5.162}
    \|A^{\frac{1}{2}}y_n\|=o(1).
\end{align}

On the other hand, let $\theta_n:=aA^{\gamma-\a}u_n-w_n$ then $ \|\theta_n\|=O(1)$, and \eqref{3.30b} can be rewritten as 
\begin{align}\label{5.152}
    iv_n+\lambda^{-1}_nA^{\a}\theta_n=o(1),
\end{align}
which implies $\|\lambda^{-1}_nA^{\a}\theta_n\|=O(1)$. By interpolation we get
\begin{align}
    \|\lambda^{-1+\frac{\mu}{2}}_nA^{\a-\frac{\beta}{2}}\theta_n\|=O(1),\label{5.153}\\
    \|\lambda^{-\frac{\mu}{2}}_nA^{\frac{\beta}{2}}\theta_n\|=O(1).\label{5.154}
\end{align}
Taking the inner product of \eqref{5.152} with $v_n$ gives
$$i\|v_n\|^2+\lan\lambda_n^{-1}A^{\alpha}\theta_n,v_n\ran=o(1).$$
If \begin{align}\label{5.155}
   \|\lambda_n^{-\frac{\mu}{2}}A^{\frac{\beta}{2}}v_n\|=o(1), 
\end{align}
then by \eqref{5.153}, $\lan\lambda_n^{-1}A^{\alpha}\theta_n,v_n\ran\leq \|\lambda_n^{-1+\frac{\mu}{2}}A^{\alpha-\frac{\beta}{2}}\theta_n\|\|\lambda_n^{-\frac{\mu}{2}}A^{\frac{\beta}{2}}v_n\|=o(1).$
Therefore,
\begin{align*}
    \|v_n\|=o(1).
\end{align*}
This, together with \eqref{5.8b} and \eqref{5.162} yields a contradiction.

Finally, we prove \eqref{5.155} holds.
Taking the inner product of \eqref{3.30d} with $A^{\beta-\alpha}v_n$, we obtain
\begin{align}\label{5.157}
    \lan i\lambda_n^{1-\mu} w_n,A^{\beta-\alpha}v_n\ran+\lan\lambda_n^{-\mu}Ay_n, A^{\beta-\alpha}v_n\ran+\lan\lambda_n^{-\mu}A^{\beta}w_n, A^{\beta-\alpha}v_n\ran+\|\lambda_n^{-\frac{\mu}{2}}A^{\frac{\beta}{2}}v_n\|^2=o(1).
\end{align}
Taking the inner product of \eqref{3.30b} with $A^{\beta-\alpha}w_n$ gives
$$\lan i\lambda_n^{1-\mu}v_n,A^{\beta-\a}w_n\ran+\lan\lambda_n^{-\mu}A^{\alpha}\theta_n,A^{\beta-\a}w_n\ran=o(1).$$
Note that by \eqref{5.154}, $\lan\lambda_n^{-\mu}A^{\alpha}\theta_n,A^{\beta-\a}w_n\ran\leq \|\lambda_n^{-\frac{\mu}{2}}A^{\frac{\beta}{2}}\theta_n\|\|\lambda_n^{-\frac{\mu}{2}}A^{\frac{\beta}{2}}w_n\|=o(1)$,
then\begin{align}\label{5.158}
    \lan i\lambda_n^{1-\mu}v_n,A^{\beta-\a}w_n\ran=o(1).
\end{align} 

On the other hand, since $0<\frac{1}{2}+\beta-\alpha<\frac{1}{2}<\frac{\gamma}{2}$, by interpolation we have
\begin{align*}
    \|\lambda_n^{-\frac{1}{\gamma}}A^{\frac{1}{2}}v_n\|,\;\;\|\lambda_n^{-\frac{1+2\beta-2\alpha}{\gamma}}A^{\frac{1}{2}+\beta-\a}v_n\|=O(1)
\end{align*}
Since $\beta<\alpha\leq \frac{\alpha(2\alpha-1)}{2\alpha-\gamma}$, we have $-\mu+\frac{1+2\beta-2\alpha}{\gamma}<0$.
Thus, by \eqref{5.162}, we obtain $$\|\lambda_n^{-\mu+\frac{1+2\beta-2\alpha}{\gamma}}A^{\frac{1}{2}}y_n\|=o(1),$$
and
\begin{align}\label{5.159}
    \lan\lambda_n^{-\mu}A^{\frac{1}{2}}y_n,A^{\frac{1}{2}+\beta-\a}v_n\ran\leq \|\lambda_n^{-\mu+\frac{1+2\beta-2\alpha}{\gamma}}A^{\frac{1}{2}}y_n\|\|\lambda_n^{-\frac{1+2\beta-2\alpha}{\gamma}}A^{\frac{1}{2}+\beta-\a}v_n\|=o(1).
\end{align}
Next, by  \eqref{5.157}, \eqref{5.158} and \eqref{5.159}, one has
\begin{align*}
    \|\lambda_n^{-\frac{\mu}{2}}A^{\frac{\beta}{2}}v_n\|^2&=-\lan\lambda_n^{-\mu}A^{\beta}w_n, A^{\beta-\alpha}v_n\ran+o(1)\\
   & \leq \|\lambda_n^{-\frac{\mu}{2}}A^{\frac{3\beta}{2}-\alpha}w_n\|\|\lambda_n^{-\frac{\mu}{2}}A^{\frac{\beta}{2}}v_n\|+o(1)\\
   &\leq \frac{1}{2}\|\lambda_n^{-\frac{\mu}{2}}A^{\frac{3\beta}{2}-\alpha}w_n\|^2+\frac{1}{2}\|\lambda_n^{-\frac{\mu}{2}}A^{\frac{\beta}{2}}v_n\|^2+o(1).
\end{align*}
Therefore, by \eqref{5.8c} and $\frac{3\beta}{2}-\alpha<\frac{\beta}{2}$, we finally obtain \eqref{5.155}, finishing the proof.

\end{proof}

\begin{proposition}\label{main5}
 The  semigroup $e^{\cA t}$ is of Gevrey class $\delta>\frac{-2\a+\g+1}{2(-2\a+\b+\g)}$ in 
\begin{equation*}
R_5(\gamma)=\left\{
\begin{aligned}
&\Big\{(\a,\b,\g)\in  R(\g) \bigm| 2\alpha-\beta-\gamma<0,\;\frac{1}{2}<\alpha<\frac{\gamma+1}{2},\;2\alpha-2\beta-\gamma+1>0\Big\}, & \quad \frac{1}{2}\le\g\le 1,\\
& \Big\{(\a,\b,\g)\in  R(\g) \bigm| 2\alpha-\beta-\gamma<0,\;\beta\le\frac{4\alpha^2-2\gamma\alpha}{4\alpha-\gamma-1}\Big\}, & \quad 1<\g\leq 2.
\end{aligned}
\right.
\end{equation*}
\end{proposition}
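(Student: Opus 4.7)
The plan follows the contradiction scheme used for Propositions \ref{main2}--\ref{main4}. By Lemma \ref{lemma:method} it suffices to verify \eqref{2.7} with
$$\mu=\frac{2(\beta+\gamma-2\alpha)}{1+\gamma-2\alpha}.$$
First one checks that $\mu\in(0,1)$: positivity is immediate from $2\alpha-\beta-\gamma<0$, while $\mu<1$ amounts to $2\alpha-2\beta-\gamma+1>0$, which is explicitly assumed in case (i) and is an algebraic consequence of $\beta\le\frac{4\alpha^2-2\gamma\alpha}{4\alpha-\gamma-1}$ combined with $\alpha<\frac{\gamma+1}{2}$ in case (ii) (the very computation that identifies $\beta=\frac{4\alpha^2-2\gamma\alpha}{4\alpha-\gamma-1}$ as the common boundary of $R_4(\gamma)$ and $R_5(\gamma)$). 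Assuming \eqref{2.7} fails, Lemma \ref{lemma:important} produces sequences satisfying \eqref{5.7}--\eqref{5.8e}, and the goal is to contradict \eqref{5.8b}.

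The first milestone is to show $\|A^{1/2}y_n\|=o(1)$. One acts on \eqref{3.30d} by a shift $A^s$ chosen according to the relative size of $\alpha$ and $\frac{\gamma}{2}$ (for instance $A^{\frac{\gamma+1}{2}-\alpha}$ in case (i), where $\alpha>\frac{1}{2}\ge\frac{\gamma}{2}$, and an analogous choice in case (ii) depending on whether $\alpha\le\frac{\gamma}{2}$ or $\alpha>\frac{\gamma}{2}$), invoking \eqref{5.8c}--\eqref{5.8e} to obtain a bound of the form $\|\lambda_n^{-1}A^{1+\frac{\gamma}{2}-\alpha}y_n\|=O(1)$. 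Interpolating this against $\|A^{1/2}y_n\|$ yields
\begin{equation*}
\|\lambda_n^{-\frac{1-\beta}{1+\gamma-2\alpha}}A^{1-\frac{\beta}{2}}y_n\|=O(1),
\end{equation*}
and the defining inequalities of $R_5(\gamma)$ are precisely those that force $-1+\frac{1-\beta}{1+\gamma-2\alpha}\le-\frac{\mu}{2}$. Pairing with the damping bound \eqref{5.8c} and inserting into \eqref{5.83} then produces $\|A^{1/2}y_n\|=o(1)$.

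For the second milestone, following the template of Proposition \ref{main4}, set $\theta_n:=aA^{\gamma-\alpha}u_n-w_n$ so that \eqref{3.30b} reads $iv_n+\lambda_n^{-1}A^\alpha\theta_n=o(1)$; testing against $v_n$ reduces the problem to proving $\|\lambda_n^{-\mu/2}A^{\beta/2}v_n\|=o(1)$. This final estimate is obtained by testing \eqref{3.30d} against $A^{\beta-\alpha}v_n$: the $Ay_n$ contribution vanishes in view of $\|A^{1/2}y_n\|=o(1)$ combined with \eqref{3.30a}, \eqref{3.30c} and an interpolation step; the pure damping term is absorbed via Young's inequality exactly as in \eqref{5.157}; and the remaining cross-term $\langle i\lambda_n^{1-\mu}w_n,A^{\beta-\alpha}v_n\rangle$ is rewritten by testing \eqref{3.30b} against $A^{\beta-\alpha}w_n$ and using the interpolation bound $\|\lambda_n^{-\mu/2}A^{\beta/2}\theta_n\|=O(1)$. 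Together with \eqref{5.8b} and the already established $\|A^{1/2}y_n\|=o(1)$, this delivers the sought contradiction.

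The main obstacle is case (ii). Here one loses the cushion $\alpha>\frac{1}{2}$ of the first subcase, so the interpolation exponent $\frac{1-\beta}{1+\gamma-2\alpha}$ sits exactly at the threshold $\frac{\mu}{2}$ along the curve $\beta=\frac{4\alpha^2-2\gamma\alpha}{4\alpha-\gamma-1}$. Consequently every $O(1)$ bound must be paired with a genuine $o(1)$ factor produced by \eqref{5.8c}, and the various mixed terms involving $A^{\gamma-\alpha}u_n$, $A^{\beta-\alpha}w_n$ and $A^{\beta-\alpha}v_n$ require tightly chosen interpolation exponents whose admissibility is tied directly to the curved boundary appearing in \eqref{2.4}.
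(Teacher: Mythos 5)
Your choice of $\mu=\frac{2(\beta+\gamma-2\alpha)}{1+\gamma-2\alpha}$, the check that $\mu\in(0,1)$, the first milestone ($\|A^{1/2}y_n\|=o(1)$ via a bound on $\lambda_n^{-1}A^{s}y_n$, interpolation, and pairing with \eqref{5.8c}), and the whole argument in the half $1<\gamma\le 2$ track the paper's proof closely: there the paper reuses the $R_4(\gamma)$ computation for the first milestone and tests \eqref{3.30d} against $A^{\beta-\alpha}v_n$ and \eqref{3.30b} against $A^{\beta-\alpha}w_n$. One caveat even there: the cross term $a\langle\lambda_n^{-\mu}A^{\gamma}u_n,A^{\beta-\alpha}w_n\rangle$ is controlled through $\|\lambda_n^{-\mu/2}A^{\gamma+\beta/2-\alpha}u_n\|=O(1)$ (obtained from $\|\lambda_n^{-1}A^{\gamma+1/2-\alpha}u_n\|=O(1)$), not by interpolating $\theta_n$ between $\|\theta_n\|$ and $\|\lambda_n^{-1}A^{\alpha}\theta_n\|$: that interpolation only yields $\|\lambda_n^{-\beta/(2\alpha)}A^{\beta/2}\theta_n\|=O(1)$, and in $R_5(\gamma)$ one has $\mu<\beta/\alpha$, so it does not deliver the bound $\|\lambda_n^{-\mu/2}A^{\beta/2}\theta_n\|=O(1)$ you invoke (that bound is still true, but only componentwise, using \eqref{5.8c} for the $w_n$ part and the $u_n$ estimate above).

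The genuine gap is in the half $\frac12\le\gamma\le1$, where you assert the template of Proposition~\ref{main4} carries over; it does not, and the paper has to restructure the second milestone there in two subcases. First, for $\gamma\le 1$ the region $R_5(\gamma)$ contains points with $\beta>\alpha$ (impossible for $1<\gamma\le2$, where $\alpha>\beta$ holds throughout $R_5(\gamma)$). When $\beta>\alpha$, $A^{\beta-\alpha}v_n$ involves a positive power of $A$ and $\|A^{\beta-\alpha}v_n\|$ is not $O(1)$ (interpolation only gives growth like $\lambda_n^{2(\beta-\alpha)/\gamma}$), so the pairing of the $o(1)$ remainder of \eqref{3.30d} with $A^{\beta-\alpha}v_n$ cannot be closed; the paper instead multiplies \eqref{3.30d} by $\lambda_n^{2(\mu-1)}$ and tests against the bounded element $A^{\alpha-\beta}v_n$, targeting $\|\lambda_n^{-1+\mu/2}A^{\alpha-\beta/2}v_n\|=o(1)$ rather than $\|\lambda_n^{-\mu/2}A^{\beta/2}v_n\|=o(1)$. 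Second, even when $\beta\le\alpha$, your treatment of the $Ay_n$ cross term needs $\|\lambda_n^{-\mu/2}A^{1+\beta/2-\alpha}y_n\|=o(1)$; the route used for $\gamma>1$ relies on $\|A^{1-\gamma/2}y_n\|=o(1)$, which requires $1-\gamma/2\le\frac12$, i.e.\ $\gamma\ge1$. For $\gamma<1$ the available interpolation (between $\|A^{1/2}y_n\|=o(1)$ and $\|\lambda_n^{-1}A^{1+\gamma/2-\alpha}y_n\|=O(1)$) only produces this estimate at the strictly heavier weight $\lambda_n^{-1+p}$ with $p=\frac{\gamma-\beta}{1+\gamma-2\alpha}$, where $-1+p<-\mu/2$; accordingly the paper proves the weaker statement $\|\lambda_n^{-1+p}A^{\beta/2}v_n\|=o(1)$ and compensates by pairing $\|v_n\|^2$ with $\lambda_n^{-p}A^{\gamma-\beta/2}u_n$. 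These reworkings must be supplied for the proof to close. Two smaller points: your claim that the case $1<\gamma\le2$ ``loses the cushion $\alpha>\frac12$'' is backwards (there $\alpha>\gamma/2>\frac12$), and the identity $-1+\frac{1-\beta}{1+\gamma-2\alpha}=-\frac{\mu}{2}$ holds for this $\mu$ everywhere in $R_5(\gamma)$, not only on the curve $\beta=\frac{4\alpha^2-2\gamma\alpha}{4\alpha-\gamma-1}$.
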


\begin{figure}[t]
\begin{subfigure}{0.19\textwidth}
\centering
\includegraphics[width=\textwidth]{./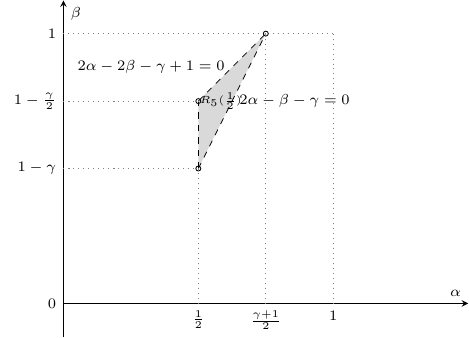}
\caption{$\gamma=\frac{1}{2}$}
\end{subfigure}
\begin{subfigure}{0.19\textwidth}
\centering
\includegraphics[width=\textwidth]{./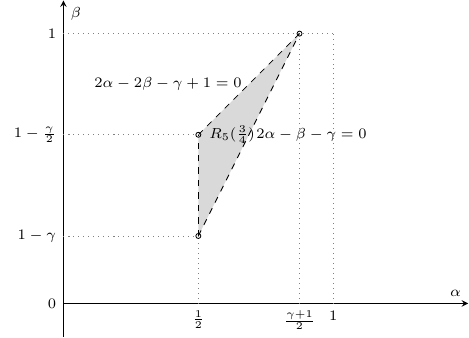}
\caption{$\gamma=\frac{3}{4}$}
\end{subfigure}
\centering
\begin{subfigure}{0.19\textwidth}
\centering
\includegraphics[width=\textwidth]{./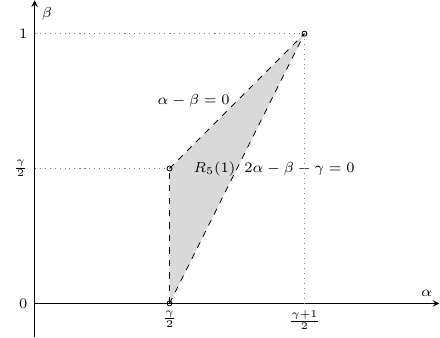}
\caption{$\gamma=1$, $a\ne 1$}
\end{subfigure}
\begin{subfigure}{0.19\textwidth}
\centering
\includegraphics[width=\textwidth]{./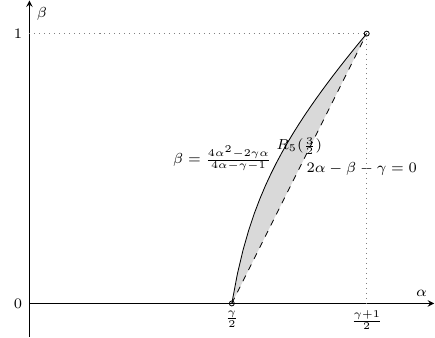}
\caption{$\gamma=\frac{3}{2}$}
\end{subfigure}
\begin{subfigure}{0.19\textwidth}
\centering
\includegraphics[width=\textwidth]{./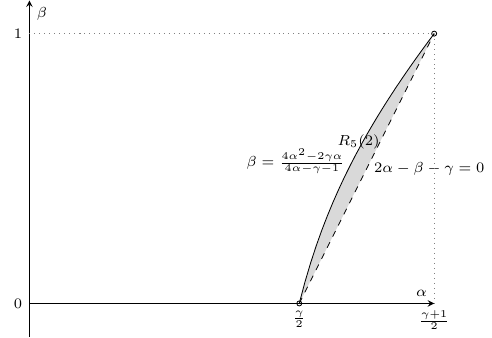}
\caption{$\gamma=2$}
\end{subfigure}
\caption{$R_5(\gamma)$ when $\gamma=\frac{1}{2},\frac{3}{4},1,\frac{3}{2},2$.}
\label{fig:r5}
\end{figure}

\begin{proof}
 By Lemma \ref{lemma:method}, it suffices to prove \eqref{2.7} for $\mu=\frac{2(-2\a+\b+\g)}{-2\a+\g+1}\in (0,1)$.  Let $(\a,\b,\g)\in R_5(\gamma)$, as shown in Figure~\ref{fig:r5}.  By Lemma \ref{lemma:important},  if \eqref{2.7} fails, then we have \eqref{5.7}-\eqref{5.8e}.  We consider two cases, respectively.

  \textbf{Case (i): When $1<\g\leq 2$.} 
Since we also have $\alpha-\beta>0,\alpha>\frac{\gamma}{2}$ in $R_5(\gamma)$ (This is evident from Figure \ref{fig:r5}), the proof of $$\|A^{\frac{1}{2}}y_n\|=o(1)$$ in $R_5(\gamma)$ can be treated in the same way as in $R_4(\gamma)$. 
The only difference is that now $\mu=\frac{2(-2\a+\b+\g)}{-2\a+\g+1}$, so if $\beta\leq \frac{1}{2}$,  we directly obtain \eqref{6.20} using \eqref{5.8c} and \eqref{5.163}, without relying on the condition $\beta<\frac{4\alpha^2-2\gamma\alpha}{4\alpha-\gamma-1}$. In other words,
\begin{align*}
    \lan\lambda_n^{-1}Ay_n,w_n \ran&=  \lan\lambda_n^{-1+\frac{\mu}{2}}A^{1-\frac{\beta}{2}}y_n,\lambda_n^{-\frac{\mu}{2}}A^{\frac{\beta}{2}}w_n \ran\\
    &=\lan\lambda_n^{-1+\frac{\mu}{2}}\xi_n,\lambda_n^{-\frac{\mu}{2}}A^{\frac{\beta}{2}}w_n \ran-\lan\lambda_n^{-1+\frac{\mu}{2}}A^{\frac{\beta}{2}}w_n,\lambda_n^{-\frac{\mu}{2}}A^{\frac{\beta}{2}}w_n \ran\\
    &\leq \|\lambda_n^{-1+\frac{\mu}{2}}\xi_n\|\|\lambda_n^{-\frac{\mu}{2}}A^{\frac{\beta}{2}}w_n \|+\|\lambda_n^{-1+\frac{\mu}{2}}A^{\frac{\beta}{2}}w_n\|\|\lambda_n^{-\frac{\mu}{2}}A^{\frac{\beta}{2}}w_n \|\\
    &=o(1).
\end{align*}

Next, let $\theta_n:=aA^{\gamma-\a}u_n-w_n$, then by \eqref{3.30b} we still have 
$$ \|\theta_n\|,\|\lambda^{-1}_nA^{\a}\theta_n\|=O(1).$$
 By interpolation we get
\begin{align}
    \|\lambda^{-1+\frac{\beta}{2\alpha}}_nA^{\a-\frac{\beta}{2}}\theta_n\|=O(1),\label{5.153*}
\end{align}
Since $\beta<\frac{4\alpha^2-2\gamma\alpha}{4\alpha-\gamma-1}$, then $\mu<\frac{\beta}{\alpha}$. Thus, 
$$ \|\lambda^{-1+\frac{\mu}{2}}_nA^{\a-\frac{\beta}{2}}\theta_n\|=O(1).$$
Taking the inner product of \eqref{3.30b}  with $v_n$ gives
$$i\|v_n\|^2+\lan\lambda_n^{-1}A^{\alpha}\theta_n,v_n\ran=o(1).$$
If \begin{align}\label{5.155*}
   \|\lambda_n^{-\frac{\mu}{2}}A^{\frac{\beta}{2}}v_n\|=o(1), 
\end{align}
then $\lan\lambda_n^{-1}A^{\alpha}\theta_n,v_n\ran\leq \|\lambda_n^{-1+\frac{\mu}{2}}A^{\alpha-\frac{\beta}{2}}\theta_n\|\|\lambda_n^{-\frac{\mu}{2}}A^{\frac{\beta}{2}}v_n\|=o(1).$
Therefore,
\begin{align}\label{5.156*}
    \|v_n\|=o(1).
\end{align}
This, together with \eqref{5.8b} and $\|A^{\frac{1}{2}}y_n\|=o(1)$ yields a contradiction.

Finally, we prove \eqref{5.155*} holds.
Taking the inner product of \eqref{3.30d} with $A^{\beta-\alpha}v_n$, we obtain
\begin{align}\label{5.157*}
    \lan i\lambda_n^{1-\mu} w_n,A^{\beta-\alpha}v_n\ran+\lan\lambda_n^{-\mu}A^{\beta}w_n+\lambda_n^{-\mu}Ay_n, A^{\beta-\alpha}v_n\ran+\|\lambda_n^{-\frac{\mu}{2}}A^{\frac{\beta}{2}}v_n\|^2=o(1).
\end{align}
Taking the inner product of \eqref{3.30b} with $A^{\beta-\alpha}w_n$ gives
\begin{align}\label{5.21}
    \lan i\lambda_n^{1-\mu}v_n,A^{\beta-\a}w_n\ran+a\lan\lambda_n^{-\mu}A^{\g}u_n,A^{\beta-\a}w_n\ran-\|\lambda_n^{-\frac{\mu}{2}}A^{\frac{\beta}{2}}w_n\|^2=o(1).
\end{align}
Acting $A^{-\a+\frac{1}{2}}$ on \eqref{3.30b} gives
\begin{align}\label{6.27}
    iA^{-\a+\frac{1}{2}}v_n+a\lambda_n^{-1}A^{\g+\frac{1}{2}-\a}u_n-\lambda_n^{-1}A^{\frac{1}{2}}w_n=o(1).
\end{align}
Thus, \begin{align}\label{7.5}
    \|\lambda_n^{-1}A^{\g+\frac{1}{2}-\a}u_n\|=O(1).
\end{align} 
Since $\frac{\gamma}{2}\le\g+\frac{\b}{2}-\a\le \gamma+\frac{1}{2}-\a$, by interpolation we have 
\begin{align}\label{6.281}
    \|\lambda_n^{-\frac{\mu}{2}}A^{\g+\frac{\b}{2}-\a}u_n\|=O(1).
\end{align}
Then  
 $$\lan\lambda_n^{-\mu}A^{\g}u_n,A^{\beta-\a}w_n\ran\leq \|\lambda_n^{-\frac{\mu}{2}}A^{\g+\frac{\b}{2}-\a}u_n\|\|\lambda_n^{-\frac{\mu}{2}}A^{\frac{\beta}{2}}w_n\|=o(1).$$ 
Together with \eqref{5.21}, we obtain
\begin{align}\label{5.158*}
    \lan i\lambda_n^{1-\mu}v_n,A^{\beta-\a}w_n\ran=o(1).
\end{align} 

Next, acting $A^{-\a+\frac{1}{2}}$ on \eqref{3.30d} gives
$$iA^{-\a+\frac{1}{2}}w_n+\lambda_n^{-1}A^{\frac{3}{2}-\a}y_n+\lambda_n^{-1}A^{\frac{1}{2}}v_n+\lambda_n^{-1}A^{\frac{1}{2}-\alpha+\beta}w_n=o(1).$$
 Since $\a>\beta$, by \eqref{5.8d} and \eqref{5.8e} we see $\|\lambda_n^{-1}A^{\frac{1}{2}}v_n\|,\|\lambda_n^{-1}A^{\frac{1}{2}-\alpha+\beta}w_n\|=O(1).$
 Thus, $$\|\lambda_n^{-1}A^{\frac{3}{2}-\a}y_n\|=O(1).$$
 On the other hand, by $\|A^{\frac{1}{2}}y_n\|=o(1)$ and $1-\frac{\gamma}{2}\leq\frac{1}{2}$, we see $\|A^{1-\frac{\gamma}{2}}y_n\|=o(1)$. 
 Since $1-\frac{\gamma}{2}\leq 1+\frac{\beta}{2}-\a<\frac{3}{2}-\a$,  by interpolation,  we have $$\|\lambda_n^{-\frac{\mu}{2}}A^{1+\frac{\beta}{2}-\alpha}y_n\|=o(1).$$
We then conclude from \eqref{5.157*} and \eqref{5.158*} that
\begin{align*}
    \|\lambda_n^{-\frac{\mu}{2}}A^{\frac{\beta}{2}}v_n\|^2&=-\lan\lambda_n^{-\mu}A^{\beta}w_n+\lambda_n^{-\mu}Ay_n, A^{\beta-\alpha}v_n\ran+o(1)\\
   & \leq \|\lambda_n^{-\frac{\mu}{2}}A^{\frac{3\beta}{2}-\alpha}w_n+\lambda_n^{-\frac{\mu}{2}}A^{1+\frac{\beta}{2}-\alpha}y_n\|\|\lambda_n^{-\frac{\mu}{2}}A^{\frac{\beta}{2}}v_n\|+o(1)\\
   &\leq \frac{1}{2}\|\lambda_n^{-\frac{\mu}{2}}A^{\frac{3\beta}{2}-\alpha}w_n+\lambda_n^{-\frac{\mu}{2}}A^{1+\frac{\beta}{2}-\alpha}y_n\|^2+\frac{1}{2}\|\lambda_n^{-\frac{\mu}{2}}A^{\frac{\beta}{2}}v_n\|^2+o(1).
\end{align*}
Since $\frac{3\beta}{2}-\alpha<\frac{\beta}{2}$, we finally obtain
$$\|\lambda_n^{-\frac{\mu}{2}}A^{\frac{\beta}{2}}v_n\|\le \|\lambda_n^{-\frac{\mu}{2}}A^{\frac{3\beta}{2}-\alpha}w_n+\lambda_n^{-\frac{\mu}{2}}A^{1+\frac{\beta}{2}-\alpha}y_n\|\leq \|\lambda_n^{-\frac{\mu}{2}}A^{\frac{3\beta}{2}-\alpha}w_n\|+\|\lambda_n^{-\frac{\mu}{2}}A^{1+\frac{\beta}{2}-\alpha}y_n\|=o(1).$$
The proof for Case (i) is complete.

\textbf{Case (ii): When $\frac{1}{2}\le\g \le 1$.} 
One can easily check that the proof of $\|A^{\frac{1}{2}}y_n\|=o(1)$ follows identically to case (i). To establish a contradiction via \eqref{5.8b}, it remains to show that  $\|v_n\|=o(1)$.
We proceed by dividing the proof of
\begin{align}\label{6.273}
    \|v_n\|=o(1)
\end{align}
into two cases. 

\textbf{Case (ii-1): If $\beta>\alpha$.}  Since $\alpha>\frac{1}{2}$, we now still have \eqref{6.27}-\eqref{6.281}. 
On the other hand, multiplying \eqref{3.30d} by $\lambda_n^{2(\mu-1)}$ yields
$$i\lambda_n^{-1+\mu}w_n+\lambda_n^{-2+\mu}Ay_n+\lambda_n^{-2+\mu}A^{\alpha}v_n+\lambda_n^{-2+\mu}A^{\beta}w_n=o(1).$$
Taking the inner product of the above with $A^{\alpha-\beta}v_n$ yields
\begin{align*}
    \lan  i\lambda_n^{-1+\mu}w_n,A^{\alpha-\beta}v_n\ran+\lan \lambda_n^{-2+\mu}Ay_n+\lambda_n^{-2+\mu}A^{\beta}w_n,A^{\alpha-\beta}v_n\ran+\|\lambda_n^{-1+\frac{\mu}{2}}A^{\alpha-\frac{\beta}{2}}v_n\|^2=o(1).
\end{align*}
Note that $\mu<1$, $ \lan i\lambda_n^{-1+\mu}w_n,A^{\alpha-\beta}v_n\ran=o(1)$. 
Then 
\begin{align*}
    \|\lambda_n^{-1+\frac{\mu}{2}}A^{\alpha-\frac{\beta}{2}}v_n\|^2&=-\lan\lambda_n^{-2+\mu}Ay_n+\lambda_n^{-2+\mu}A^{\beta}w_n,A^{\alpha-\beta}v_n\ran+o(1)\\
    &\leq\frac{1}{2}\|\lambda_n^{-1+\frac{\mu}{2}}A^{1-\frac{\beta}{2}}y_n+\lambda_n^{-1+\frac{\mu}{2}}A^{\frac{\beta}{2}}w_n\|^2+ \frac{1}{2}\|\lambda_n^{-1+\frac{\mu}{2}}A^{\alpha-\frac{\beta}{2}}v_n\|^2+o(1).
\end{align*}
Since $\beta>\alpha>\frac{1}{2}$ and $2\alpha-2\beta-\gamma+1>0$, we now still have \eqref{5.16} and \eqref{5.211}.
However, since we have proved $\|A^{\frac{1}{2}}y_n\|=o(1)$,  by interpolation and \eqref{5.211}, we have
$$\|\lambda_n^{-1+\frac{\mu}{2}}A^{1-\frac{\beta}{2}}y_n\|=o(1).$$
By the above and \eqref{5.8c},
\begin{align}\label{6.274}
    \|\lambda_n^{-1+\frac{\mu}{2}}A^{\alpha-\frac{\beta}{2}}v_n\|\leq \|\lambda_n^{-1+\frac{\mu}{2}}A^{1-\frac{\beta}{2}}y_n\|+\|\lambda_n^{-1+\frac{\mu}{2}}A^{\frac{\beta}{2}}w_n\|=o(1).
\end{align}
Taking the inner product of \eqref{3.30b}  with $v_n$ gives
$$i\|v_n\|^2+a\lan\lambda_n^{-\frac{\mu}{2}}A^{\gamma-\alpha+\frac{\beta}{2}}u_n,\lambda_n^{-1+\frac{\mu}{2}}A^{\alpha-\frac{\beta}{2}}v_n\ran-\lan\lambda_n^{-\frac{\mu}{2}}A^{\frac{\beta}{2}}w_n,\lambda_n^{-1+\frac{\mu}{2}}A^{\alpha-\frac{\beta}{2}}v_n\ran=o(1).$$
Therefore, we deduce \eqref{6.273} from \eqref{5.8c}, \eqref{6.281} and \eqref{6.274}.

\textbf{Case (ii-2): If $\beta\le \alpha$.} Note that we still have \eqref{6.27}-\eqref{6.281}.  Let $$p:=\frac{\gamma-\beta}{\gamma+1-2\alpha}.$$
Since $\frac{\gamma}{2}\leq \gamma-\frac{\beta}{2}\leq \gamma+\frac{1}{2}-\alpha$ and $\frac{\b}{2}\leq \alpha-\frac{\beta}{2}\le \frac{1}{2}$, by interpolation we have
\begin{align}\label{7.52}
    \|\lambda_n^{-p}A^{\gamma-\frac{\beta}{2}}u_n\|=O(1),\, \|\lambda_n^{-p}A^{\a-\frac{\beta}{2}}w_n\|=o(1).
\end{align}
By \eqref{5.82},
$$i\|v_n\|^2+a\lan \l_n^{-p} A^{\gamma-\frac{\beta}{2}}u_n, \lambda_n^{-1+p} A^{\frac{\beta}{2}}v_n\ran- \lan \l_n^{-p} A^{\alpha-\frac{\beta}{2}}w_n,  \l_n^{-1+p}A^{\frac{\beta}{2}}v_n\ran=o(1).$$
In view of \eqref{7.52}, it suffices to show
\begin{equation} \label{7.51}
 \|\l_n^{-1+p}A^{\frac{\beta}{2}}v_n\|=o(1),
\end{equation}
from which it follows that $\|v_n\|=o(1)$.

Indeed, acting $A^{\frac{\gamma}{2}-\alpha}$ on \eqref{3.30d}, note that $\frac{1}{2}+\alpha-\b-\frac{\gamma}{2}\ge0$, we have $\|\lambda_n^{-1}A^{1+\frac{\gamma}{2}-\alpha}y_n\|=O(1)$. By interpolation, it then follows that $\|\lambda_n^{-1+\frac{\mu}{2}}A^{1-\frac{\beta}{2}}y_n\|=O(1)$. Moreover, since $\frac{1}{2}\leq 1+\frac{\beta}{2}-\alpha\leq 1-\frac{\b}{2}$, we conclude that
\begin{align}\label{7.54}
    \|\l_n^{-1+p}A^{ 1+\frac{\beta}{2}-\alpha}y_n\|=o(1).
\end{align}
On the other hand, taking the inner produce of \eqref{3.30b} with $\lambda_n^{-\frac{2(1-\gamma)}{1+\gamma-2\alpha}}A^{\beta-\alpha}w_n$ yields
\begin{align*}
     \lan i\lambda_n^{1-\mu} v_n,\lambda_n^{-\frac{2(1-\gamma)}{1+\gamma-2\alpha}}A^{\beta-\alpha}w_n\ran+a\lan\lambda_n^{-\frac{\mu}{2}-\frac{1-\gamma}{1+\gamma-2\alpha}}A^{\g+\frac{\beta}{2}-\alpha}u_n, \lambda_n^{-\frac{\mu}{2}}A^{\frac{\beta}{2}}w_n\ran+\|\l_n^{-\frac{\mu}{2}-\frac{1-\gamma}{1+\gamma-2\alpha}}A^{\frac{\beta}{2}}w_n\|^2=o(1).
\end{align*}
It follows from \eqref{5.8c} and \eqref{6.281} that $$\lan\lambda_n^{-\frac{\mu}{2}-\frac{1-\gamma}{1+\gamma-2\alpha}}A^{\g+\frac{\beta}{2}-\alpha}u_n, \lambda_n^{-\frac{\mu}{2}}A^{\frac{\beta}{2}}w_n\ran,\quad \|\l_n^{-\frac{\mu}{2}-\frac{1-\gamma}{1+\gamma-2\alpha}}A^{\frac{\beta}{2}}w_n\|=o(1),$$
where we also use $-\frac{\mu}{2}-\frac{1-\gamma}{1+\gamma-2\alpha}\leq -\frac{\mu}{2}$. Thus,
\begin{align}\label{7.55}
    \lan i\lambda_n^{1-\mu} v_n,\lambda_n^{-\frac{2(1-\gamma)}{1+\gamma-2\alpha}}A^{\beta-\alpha}w_n\ran=o(1).
\end{align}

Note that $$-1+p=-\frac{\mu}{2}-\frac{1-\gamma}{1+\gamma-2\alpha}.$$
Taking the inner produce of \eqref{3.30d} with $\lambda_n^{-\frac{2(1-\gamma)}{1+\gamma-2\alpha}}A^{\beta-\alpha}v_n$ yields
\begin{align}\label{7.53}
    \lan i\lambda_n^{1-\mu} w_n,\lambda_n^{-\frac{2(1-\gamma)}{1+\gamma-2\alpha}}A^{\beta-\alpha}v_n\ran+\lan\lambda_n^{-\mu}A^{\beta}w_n+\lambda_n^{-\mu}Ay_n, \lambda_n^{-\frac{2(1-\gamma)}{1+\gamma-2\alpha}}A^{\beta-\alpha}v_n\ran+\|\l_n^{-1+p}A^{\frac{\beta}{2}}v_n\|^2=o(1).
\end{align}
Then by \eqref{7.55} we get 
\begin{align*}
    \|\l_n^{-1+p}A^{\frac{\beta}{2}}v_n\|^2&\leq  \lan\lambda_n^{-\mu}A^{\beta}w_n+\lambda_n^{-\mu}Ay_n, \lambda_n^{-\frac{2(1-\gamma)}{1+\gamma-2\alpha}}A^{\beta-\alpha}v_n\ran+o(1)\\
    &\leq\frac{1}{2}\|\lambda_n^{-\frac{\mu}{2}-\frac{1-\gamma}{1+\gamma-2\alpha}}A^{\frac{3\beta}{2}-\alpha}w_n+\lambda_n^{-\frac{\mu}{2}-\frac{1-\gamma}{1+\gamma-2\alpha}}A^{ 1+\frac{\beta}{2}-\alpha}y_n\|^2+\frac{1}{2}\|\l_n^{-\frac{\mu}{2}-\frac{1-\gamma}{1+\gamma-2\alpha}}A^{\frac{\beta}{2}}v_n\|^2+o(1).
\end{align*}
Therefore, 
\begin{align*}
     \|\l_n^{-1+p}A^{\frac{\beta}{2}}v_n\|\leq  \|\l_n^{-\frac{\mu}{2}-\frac{1-\gamma}{1+\gamma-2\alpha}}A^{ \frac{3\beta}{2}-\alpha}w_n\|+ \|\l_n^{-\frac{\mu}{2}-\frac{1-\gamma}{1+\gamma-2\alpha}}A^{ 1+\frac{\beta}{2}-\alpha}y_n\|=o(1),
\end{align*}
where we have used \eqref{5.8c}, \eqref{7.54} and $\beta\leq \alpha$.
\end{proof}

\section{Identical wave speeds}\label{sec:same}
\setcounter{equation}{0}
\setcounter{theorem}{0}
This section is devoted to the proof of Theorem \ref{thm:same}, following the same strategy as in Section~\ref{sec:different}. Note that Lemma \ref{lemma:important} remains valid in the present setting.

Recall the parameter space partition in \eqref{2.5}. Since the expressions for $\mu$ in $\tilde{R}_1$, $\tilde{R}_2$ (when $\beta>\frac{1}{2}$) and $\tilde{R}_4$ coincide with those in $R_1(1)$, $R_2(1)$ and $R_5(1)$ when $a\neq1$ (see Figure \ref{Comparison}), the proof may be omitted in these cases. It remains to prove the result in $\tilde{R}_2$ (when $\beta\leq\frac{1}{2}$) and $\tilde{R}_3$.
 By Lemma \ref{lemma:method}, it suffices to verify \eqref{2.7} holds for $\mu=2(2\alpha-\b)$ in $\tilde{R}_2$ and $\mu=2\beta$ in $\tilde{R}_3$, respectively. By Lemma \ref{lemma:important}, if \eqref{2.7} fails, we have \eqref{5.7}-\eqref{5.8e}. 

Since $\alpha, \beta\leq \frac{1}{2}$ both in $\tilde{R}_2$ and $\tilde{R}_3$, by \eqref{3.30d} we see $$\|\lambda_n^{-1}Ay_n\|=O(1).$$
Then by interpolation, 
$$\|\lambda_n^{-1+\beta}A^{1-\frac{\beta}{2}}y_n\|=O(1).$$
Note that $-\beta\leq -2\alpha+\beta$, it follows from \eqref{5.8c} that
$\lan\lambda_n^{-1}Ay_n,w_n\ran=\lan\lambda_n^{-1+\beta}A^{1-\frac{\beta}{2}}y_n,\lambda_n^{-\beta}A^{\frac{\beta}{2}}w_n\ran=o(1)$. Thus, we deduce from  \eqref{5.83} that
\begin{align}\label{7.65}
    \|A^{\frac{1}{2}}y_n\|=o(1).
\end{align}
Next, we prove that $\|v_n\|=o(1)$. This, together with \eqref{5.8b} and \eqref{7.65} yields a contradiction. We will prove it in $\tilde{R}_2$ (when $\beta\leq\frac{1}{2}$) and $\tilde{R}_3$, respectively.

\textbf{Case (i): Let $(\a,\b)\in \tilde{R}_2$ (when $\beta\leq\frac{1}{2}$) and let $\mu=2(2\alpha-\b)$.}
Taking the inner produce of \eqref{3.30b} with $A^{\alpha-\beta}w_n$ and taking the inner produce of \eqref{3.30d} with $A^{\alpha-\beta}v_n$, respectively, we get
\begin{align}
 \lan i\lambda_n^{1-\mu} v_n,A^{\alpha-\beta}w_n\ran+\lan\lambda_n^{-\mu}Au_n, A^{\alpha-\beta}w_n\ran+\|\l_n^{-\frac{\mu}{2}}A^{\alpha-\frac{\b}{2}}w_n\|^2=o(1).\label{7.61}\\
    \lan i\lambda_n^{1-\mu} w_n,A^{\alpha-\beta}v_n\ran+\lan\lambda_n^{-\mu}Ay_n, A^{\alpha-\beta}v_n\ran+\|\l_n^{-\frac{\mu}{2}}A^{\alpha-\frac{\beta}{2}}v_n\|^2+\lan\lambda_n^{-\mu}A^{\beta}w_n, A^{\alpha-\beta}v_n\ran=o(1).\label{7.62}
\end{align}
Moreover, 
\begin{align}\label{7.68}
\begin{array}{lll}
     \lan\lambda_n^{-\mu}Au_n, A^{\alpha-\beta}w_n\ran&=\lan A^{\frac{1}{2}}u_n, \lambda_n^{-\mu}A^{\frac{1}{2}+\alpha-\beta}w_n\ran\\
    &=\lan A^{\frac{1}{2}}u_n, i\lambda_n^{-\mu+1}A^{\frac{1}{2}+\alpha-\beta}y_n\ran+o(1)\\
    &=-\lan \lambda_n^{-\mu}A^{\frac{1}{2}}v_n, A^{\frac{1}{2}+\alpha-\beta}y_n\ran+o(1)\\
    &=-\lan \lambda_n^{-\mu}A^{\alpha-\beta}v_n, Ay_n\ran+o(1).
\end{array}   
\end{align}
Therefore, adding \eqref{7.61} and \eqref{7.62} and taking the real part of it yields
\begin{align*}
  \|\l_n^{-\frac{\mu}{2}}A^{\alpha-\frac{\beta}{2}}v_n\|^2+\Re\lan\lambda_n^{-\frac{\mu}{2}}A^{\frac{\beta}{2}}w_n,\lambda_n^{-\frac{\mu}{2}} A^{\alpha-\frac{\beta}{2}}v_n\ran=o(1),
\end{align*}
where we use $\alpha-\frac{\beta}{2}\leq \frac{\beta}{2}$.
 We then conclude from \eqref{5.8c} and the above that 
 \begin{align}\label{7.63}
      \|\l_n^{-\frac{\mu}{2}}A^{\alpha-\frac{\beta}{2}}v_n\|=o(1).
 \end{align}
 On the other hand, since $\frac{1}{2}\leq 1+\frac{\beta}{2}-\alpha\leq 1$, by interpolation we have
 \begin{align}\label{7.64}
      \|\l_n^{-1+\frac{\mu}{2}}A^{1+\frac{\beta}{2}-\alpha}u_n\|=O(1).
 \end{align}
By \eqref{5.82}, we have
$$i\|v_n\|^2+\lan  \l_n^{-1+\frac{\mu}{2}}A^{1+\frac{\beta}{2}-\alpha}u_n, \l_n^{-\frac{\mu}{2}}A^{\alpha-\frac{\beta}{2}}v_n\ran- \lan \l_n^{-1+\frac{\mu}{2}} A^{\frac{\beta}{2}}w_n, \l_n^{-\frac{\mu}{2}}A^{\alpha-\frac{\beta}{2}}v_n\ran=o(1),$$
 implying $\|v_n\|=o(1)$ due to \eqref{5.8c}, \eqref{7.63} and \eqref{7.64}. 

\textbf{Case (ii): Let $(\a,\b)\in \tilde{R}_3$ and let $\mu=2\b$.}
Taking the inner produce of \eqref{3.30b} with $w_n$ and taking the inner produce of \eqref{3.30d} with $v_n$, respectively, we get
\begin{align}
 \lan i\lambda_n^{1-\mu} v_n,w_n\ran+\lan\lambda_n^{-\mu}Au_n, w_n\ran+\|\l_n^{-\frac{\mu}{2}}A^{\frac{\a}{2}}w_n\|^2=o(1).\label{7.66}\\
    \lan i\lambda_n^{1-\mu} w_n,v_n\ran+\lan\lambda_n^{-\mu}Ay_n, v_n\ran+\|\l_n^{-\frac{\mu}{2}}A^{\frac{\a}{2}}v_n\|^2+\lan\lambda_n^{-\mu}A^{\beta}w_n, v_n\ran=o(1).\label{7.67}
\end{align}
By a similar analysis as in \eqref{7.68}, we have $ \lan\lambda_n^{-\mu}Au_n, w_n\ran=-\lan \lambda_n^{-\mu}v_n, Ay_n\ran+o(1).$ 
Then, adding \eqref{7.66} and \eqref{7.67}, and taking the real part of it gives
\begin{align*}
  \|\l_n^{-\frac{\mu}{2}}A^{\frac{\a}{2}}v_n\|^2+\|\l_n^{-\frac{\mu}{2}}A^{\frac{\a}{2}}w_n\|^2+\Re\lan\lambda_n^{-\frac{\mu}{2}}A^{\beta-\frac{\a}{2}}w_n,\lambda_n^{-\frac{\mu}{2}} A^{\frac{\a}{2}}v_n\ran=o(1).
\end{align*}
We conclude from $\beta\leq \a$ that 
$$\|\l_n^{-\frac{\mu}{2}}A^{\frac{\a}{2}}v_n\|=o(1).$$
Furthermore, by interpolation, 
 \begin{align*}
      \|\l_n^{-1+\alpha}A^{1-\frac{\a}{2}}u_n\|=O(1).
 \end{align*}
Finally, by \eqref{5.82}, we have
$$i\|v_n\|^2+\lan  \l_n^{-1+\frac{\mu}{2}}A^{1-\frac{\a}{2}}u_n, \l_n^{-\frac{\mu}{2}}A^{\frac{\a}{2}}v_n\ran- \lan\lambda_n^{-1+\frac{\mu}{2}}A^{\beta-\frac{\a}{2}}w_n,\lambda_n^{-\frac{\mu}{2}} A^{\frac{\a}{2}}v_n\ran=o(1),$$
which  implies $\|v_n\|=o(1)$ since $-1+\frac{\mu}{2}\leq-1+\alpha$ and $\beta-\frac{\a}{2}\leq \frac{\beta}{2}$.

\section{Asymptotic expansion of eigenvalues}\label{sec:Asymptotic}
\label{sec:eigen}
This section studies the asymptotic expansion of eigenvalues associated with $\mathcal{A}_{\a,\b,\g}$. The asymptotic forms of these eigenvalues provide important clues about the order of Gevrey class of the semigroup $e^{\mathcal{A}_{\a,\b,\g}t}$ discussed in \Cref{thm:different} and \Cref{thm:same}. Furthermore, based on the asymptotic expansion of the eigenvalues, we can further prove that the orders of Gevrey class previously acquired are optimal. The development of this section largely follows the procedures discussed in previous works \cite{Deng-Han-Kuang-Zhang 2024,  Deng-Han-Kuang-Zhang 2025, Han-Kuang-Liu-Zhang 2024, Han-Kuang-Zhang 2023,  Hao-Kuang-Liu-Yong 2025, Hao-Liu-Yong 2015, Kuang-Liu-Sare 2021, Kuang-Liu-Tebou  2022}, and hence we omit  low-level technical steps of these procedures for brevity.

\subsection{Characteristic equations}
\label{sec:characteristic}
Let $\mu_n\rightarrow \infty$ as $n \rightarrow \infty$ be a sequence of eigenvalues of $A$. The characteristic equation associated with $\mathcal{A}_{\a,\b,\g}$ is
\begin{equation}
\label{eq:characteristic-general}
\lambda_n^4 + \mu_n^\beta \lambda_n^3 + (\mu_n+\m_n^{2\a}+a \mu_n^\g) \lambda_n^2 +a \mu_n^{\b+\g}\lambda_n + a \m_n^{1+\g}=0,
\end{equation}
where we consider $a \in \curly{1,2}$ and the parameter space
\begin{equation}
\label{eq:E}
E = \curly{(\alpha,\beta,\gamma) \mid 0 \le \alpha \le \frac{\gamma+1}{2},\; 0 \le \beta \le 1,\; \frac{1}{2} \le \gamma \le 2}.
\end{equation}
Note that when $\gamma=1$, the choice of $a$ represents whether
the two equations of the coupled system having different wave speeds ($a=2$) or the same wave speeds ($a=1$). For the latter case where $a=1$ and $\gamma=1$, we consider the parameter space 
\begin{equation}
\label{eq:E-tilde}
\tilde{E} := \curly{(\alpha,\beta) \mid 0 \le \alpha \le 1,\; 0 \le \beta \le 1}.
\end{equation}

The goal of \Cref{sec:eigen} is to identify the asymptotic forms of the roots to the following two instantiations of \eqref{eq:characteristic-general}. Specifically,
\begin{itemize}[leftmargin=*]
\item Set $a=2$ in \eqref{eq:characteristic-general}. Let $n\rightarrow\infty$, for all $(\alpha,\beta,\gamma) \in E$, we are interested in finding the roots to 
\begin{equation}
\label{eq:characteristic-different}
\lambda^4 + \mu_n^\beta \lambda^3 + (\mu_n+\m_n^{2\a}+2 \mu_n^\g) \lambda^2 +2 \mu_n^{\b+\g}\lambda + 2 \m_n^{1+\g}=0.
\end{equation}
\item Set $a=1$ and $\gamma=1$ in \eqref{eq:characteristic-general}.  Let $n\rightarrow\infty$, for all $(\alpha,\beta) \in \tilde{E}$, we are interested in finding the roots to 
\begin{equation}
\label{eq:characteristic-same}
\lambda^4 + \mu_n^\beta \lambda^3 + (\mu_n+\m_n^{2\a}+ \mu_n) \lambda^2 +  \mu_n^{\b+1}\lambda +  \m_n^{2}=0.
\end{equation}
\end{itemize}

\begin{figure}[t]
\centering
\begin{subfigure}{0.32\textwidth}
\centering
\includegraphics[width=\textwidth]{./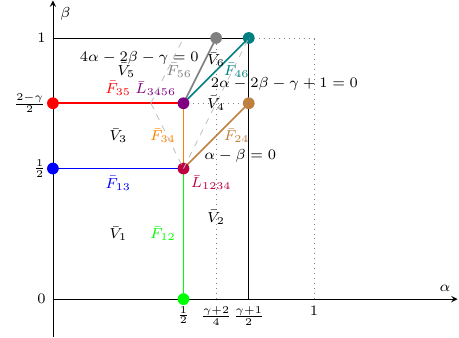}
\caption{$\gamma=\frac{1}{2}$}
\label{fig:gamma-1o2}
\end{subfigure}
\begin{subfigure}{0.32\textwidth}
\centering
\includegraphics[width=\textwidth]{./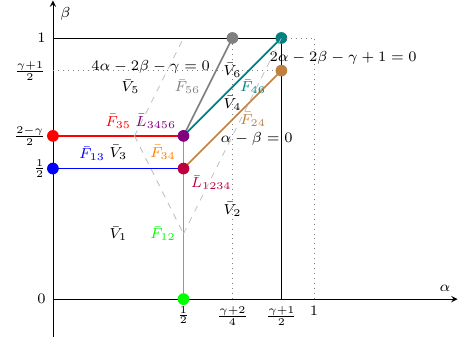}
\caption{$\gamma=\frac{3}{4}$}
\label{fig:gamma-3o4}
\end{subfigure}
\begin{subfigure}{0.32\textwidth}
\centering
\includegraphics[width=\textwidth]{./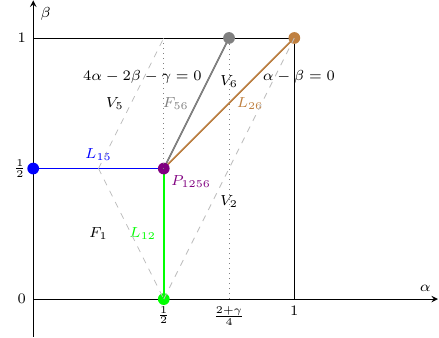}
\caption{$\gamma=1$, $a\ne 1$}
\label{fig:gamma-1}
\end{subfigure}
\begin{subfigure}{0.32\textwidth}
\centering
\includegraphics[width=\textwidth]{./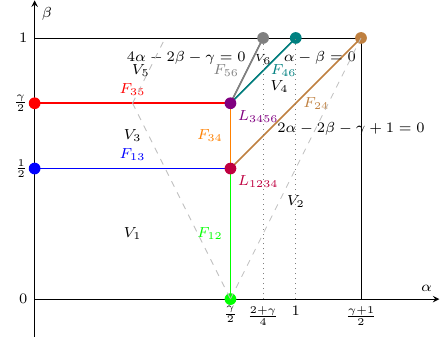}
\caption{$\gamma=\frac{3}{2}$}
\label{fig:gamma-3o2}
\end{subfigure}
\begin{subfigure}{0.32\textwidth}
\centering
\includegraphics[width=\textwidth]{./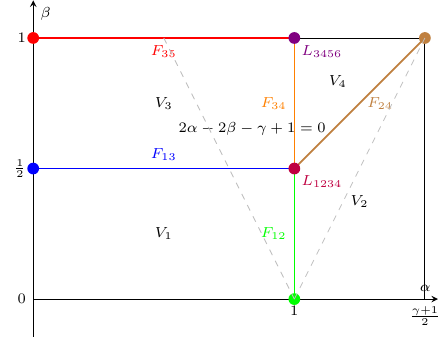}
\caption{$\gamma=2$}
\label{fig:gamma-2}
\end{subfigure}
\begin{subfigure}{0.32\textwidth}
\centering
\includegraphics[width=\textwidth]{./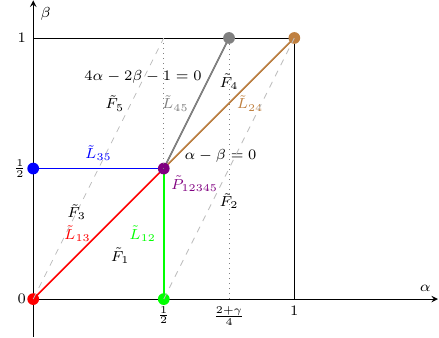}
\caption{$\tilde{E}$}
\label{fig:E-tilde}
\end{subfigure}
\caption{Regions in \Cref{def:partition-E} derived from the partition of $E$ (\Cref{fig:gamma-1o2}-\Cref{fig:gamma-2}) and regions in \Cref{def:partition-E-tilde} derived from the partition of $\tilde{E}$ (\Cref{fig:E-tilde}).  The gray dash lines are the boundaries of the regularity region.}
\label{fig:E-and-E-tilde}
\end{figure}

\subsection{Parameter space partitions}
\label{sec:partition}
Before presenting the roots to the characteristic equations \eqref{eq:characteristic-different} and \eqref{eq:characteristic-same}, we first partition the parameter spaces $E$ and $\tilde{E}$ into several regions, since the roots have distinct asymptotic forms across different regions under these partitions.

Specifically, following the method described in \cite[Chapter 4]{Hao-Kuang-Liu-Yong 2025}, we divide the set $E$ into the following regions.

\begin{definition}
\label[definition]{def:partition-E}
The parameter space $E$ as given in \eqref{eq:E} can be partitioned into disjoint regions as follows. 
\begin{align*}
V_1 = & \curly{ (\a,\beta,\g) \in E \mid
0 \le \a < \frac{\g}{2},\ 
0 \le \beta < \frac{1}{2},\
1 < \gamma\le 2
}, \\
V_2 = & \curly{ (\a,\beta,\g) \in E \mid 
\frac{\g}{2}<\a \le \frac{\g+1}{2},\ 
2\a - 2\beta -\g + 1>0,\
0 \le \beta,\
1 \le \gamma \le 2
}, \\
V_3 = & \curly{ (\a,\beta,\g) \in E \mid 
0 \le \a <\frac{\g }{2},\ 
\frac{1}{2}<\beta <\frac{\g}{2},\ 
1<\gamma\le 2
}, \\
V_4 = & \curly{(\a,\beta,\g) \in E \mid 
\frac{\gamma}{2} < \alpha < \frac{\gamma+1}{2},\ 
 \beta \le 1, 
\beta < \alpha,\ 
2\alpha - 2\beta - \gamma +1 < 0,\
1 < \gamma \le 2
}, \\
V_5 = & \curly{ (\a,\beta,\g) \in E \mid
0 \le \alpha ,\
\frac{\g}{2}<\beta\le 1,\ 
4\alpha - 2 \beta - \gamma <0,\  
1 \le \gamma < 2
}, \\
V_6 = & \curly{ (\a,\beta,\g) \in E \mid
 4 \alpha - 2\beta - \gamma >0,\
 \alpha < \beta \le 1,\
 1 \le \gamma < 2
 }, \\
F_{12} = & \curly{ (\a,\beta,\g) \in E \mid 
\a =  \frac{\g}{2},\
 0 \le \beta < \frac{1}{2},\
1 < \gamma \le 2
}, \\
F_{13} = & \curly{ (\a,\beta,\g) \in E \mid 
0 \le \a< \frac{\gamma}{2},\ 
\beta = \frac{1}{2},\
1 < \gamma\le 2
}, \\
F_{24} = & \curly{ (\a,\beta,\g) \in E \mid 
 \frac{\g}{2} < \alpha \le \frac{1+\gamma}{2},\ 
2\alpha - 2\beta - \gamma + 1 = 0,\
1 < \gamma \le 2
}, \\
F_{34} = & \curly{ (\a,\beta,\g) \in E \mid 
\a =  \frac{\g}{2},\
 \frac{1}{2} < \beta < \frac{\gamma}{2},\
1 < \gamma \le 2
}, \\
F_{35} = & \curly{ (\a,\beta,\g) \in E \mid 
0 \le \a < \frac{\g}{2},\ 
\beta = \frac{\g}{2},\
1<\gamma\le 2
}, \\
F_{46} = & \curly{ (\a,\beta,\g) \in E \mid 
\frac{\gamma}{2} < \a \le 1,\ 
\beta = \alpha,\
1<\gamma < 2
}, \\
F_{56} = & \curly{ (\a,\beta,\g) \in E \mid 
\frac{\gamma}{2} < \a \le \frac{\gamma+2}{4},\ 
4\alpha - 2\beta - \gamma  = 0,\
1 \le \gamma < 2
}, \\
L_{1234} = & \curly{ (\a,\beta,\g) \in E \mid 
\a = \frac{\g}{2},\ 
\beta = \frac{1}{2},\
1 < \gamma \le 2
}, \\
L_{3456} = & \curly{ (\a,\beta,\g) \in E \mid 
\a = \frac{\g}{2},\ 
\beta = \frac{\g}{2},\
1 < \gamma \le 2
}, \\
F_1 = & \curly{ (\a,\beta,\g) \in E \mid
0 \le \a < \frac{1}{2},\ 
0 \le \beta < \frac{1}{2},\
\gamma=1
}, \\
L_{12} = & \curly{ (\a,\beta,\g) \in E \mid 
\a =  \frac{1}{2},\
 0 \le \beta < \frac{1}{2},\
\gamma = 1
}, \\
L_{15} = & \curly{ (\a,\beta,\g) \in E \mid 
0 \le \a< \frac{1}{2},\ 
\beta = \frac{1}{2},\
\gamma=1
}, \\
L_{26} = & \curly{ (\a,\beta,\g) \in E \mid 
 \frac{1}{2} < \alpha \le 1,\ 
\beta = \alpha,\
\gamma=1
}, \\
P_{1256} = & \curly{ (\a,\beta,\g) \in E \mid 
\a = \frac{1}{2},\ 
\beta = \frac{1}{2},\
\gamma=1
}, \\
\bar{V}_1 = & \curly{ (\a,\beta,\g) \in E \mid
0 \le \a < \frac{1}{2},\ 
0 \le \beta < \frac{1}{2},\
\frac{1}{2} \le \gamma <1
}, \\
\bar{V}_2 = & \curly{ (\a,\beta,\g) \in E \mid 
\frac{1}{2}<\a \le \frac{\g+1}{2},\ 
0 \le \beta < \alpha ,\
\frac{1}{2} \le \gamma <1
}, \\
\bar{V}_3 = & \curly{ (\a,\beta,\g) \in E \mid 
0 \le \a <\frac{1}{2},\ 
\frac{1}{2}<\beta <1-\frac{\g}{2},\ 
\frac{1}{2} \le \gamma <1
}, \\
\bar{V}_4 = & \curly{(\a,\beta,\g) \in E \mid 
\frac{1}{2} < \alpha \le \frac{\gamma+1}{2},\ 
\beta > \alpha,\ 
2\alpha - 2\beta - \gamma +1 > 0,\
\frac{1}{2} \le \gamma <1
}, \\
\bar{V}_5 = & \curly{ (\a,\beta,\g) \in E \mid
0 \le \alpha ,\
1-\frac{\g}{2}<\beta\le 1,\ 
4\alpha - 2 \beta - \gamma <0,\  
\frac{1}{2} \le \gamma <1
}, \\
\bar{V}_6 = & \curly{ (\a,\beta,\g) \in E \mid
4 \alpha - 2\beta - \gamma >0,\
2\alpha - 2\beta - \gamma + 1 <0,\
\beta \le 1,\
\frac{1}{2} \le \gamma <1
}, \\
\bar{F}_{12} = & \curly{ (\a,\beta,\g) \in E \mid 
\a =  \frac{1}{2},\
 0 \le \beta < \frac{1}{2},\
\frac{1}{2} \le \gamma <1
}, \\
\bar{F}_{13} = & \curly{ (\a,\beta,\g) \in E \mid 
0 \le \a< \frac{1}{2},\ 
\beta = \frac{1}{2},\
\frac{1}{2} \le \gamma <1
}, \\
\bar{F}_{24} = & \curly{ (\a,\beta,\g) \in E \mid 
 \frac{1}{2} < \alpha \le \frac{1+\gamma}{2},\ 
\beta = \alpha,\
\frac{1}{2} \le \gamma <1
}, \\
\bar{F}_{34} = & \curly{ (\a,\beta,\g) \in E \mid 
\a =  \frac{1}{2},\
 \frac{1}{2} < \beta < 1-\frac{\gamma}{2},\
\frac{1}{2} \le \gamma <1
}, \\
\bar{F}_{35} = & \curly{ (\a,\beta,\g) \in E \mid 
0 \le \a < \frac{1}{2},\ 
\beta = 1-\frac{\g}{2},\
\frac{1}{2} \le \gamma <1
}, \\
\bar{F}_{46} = & \curly{ (\a,\beta,\g) \in E \mid 
\frac{1}{2} < \a \le \frac{1+\gamma}{2},\ 
2\alpha - 2\beta - \gamma + 1 = 0,\
\frac{1}{2} \le \gamma <1
}, \\
\bar{F}_{56} = & \curly{ (\a,\beta,\g) \in E \mid 
\frac{1}{2} < \a \le \frac{\gamma+2}{4},\ 
4\alpha - 2\beta - \gamma  = 0,\
\frac{1}{2} \le \gamma <1
}, \\
\bar{L}_{1234} = & \curly{ (\a,\beta,\g) \in E \mid 
\a = \frac{1}{2},\ 
\beta = \frac{1}{2},\
\frac{1}{2} \le \gamma <1
}, \\
\bar{L}_{3456} = & \curly{ (\a,\beta,\g) \in E \mid 
\a = \frac{1}{2},\ 
\beta = 1-\frac{\g}{2},\
\frac{1}{2} \le \gamma <1
}.
\end{align*}
\end{definition}

\noindent Similarly, we partition the set $\tilde{E}$ into the regions described below.
\begin{definition}
\label[definition]{def:partition-E-tilde}
The parameter space $\tilde{E}$ as given in \eqref{eq:E-tilde} can be partitioned into disjoint regions as follows. 
\begin{align*}
\tilde{F}_1 = & \curly{(\alpha,\beta) \in \tilde{E} \mid \beta<\alpha<\frac{1}{2},\; 0 < \beta < \frac{1}{2}},\\
\tilde{F}_2 = & \curly{ (\a,\beta) \in \tilde{E} \mid 
\frac{1}{2}<\a \le 1,\ 
0 \le \beta < \alpha
},\\
\tilde{F}_3 = & \curly{(\alpha,\beta) \in \tilde{E} \mid 0 \le \alpha<\beta,\; 0 < \beta < \frac{1}{2}},\\
\tilde{F}_5 = & \curly{ (\a,\beta) \in \tilde{E}  \mid
0 \le \alpha ,\
\frac{1}{2}<\beta\le 1,\ 
4\alpha - 2 \beta - 1 <0
}, \\
\tilde{F}_4 = & \curly{ (\a,\beta) \in \tilde{E}  \mid
4 \alpha - 2\beta - 1 >0,\
\alpha < \beta \le 1}, \\
\tilde{L}_{13} = & \curly{(\alpha,\beta) \in \tilde{E} \mid 0 \le \alpha<\frac{1}{2},\; \beta=\alpha},\\
\tilde{L}_{12} = & \curly{ (\a,\beta) \in \tilde{E} \mid 
\a =  \frac{1}{2},\
 0 \le \beta < \frac{1}{2}
},\\
\tilde{L}_{35} = & \curly{ (\a,\beta)  \in \tilde{E} \mid 
0 \le \a< \frac{1}{2},\ 
\beta = \frac{1}{2}},\\
\tilde{L}_{24} = & \curly{ (\a,\beta) \in \tilde{E} \mid 
 \frac{1}{2} < \alpha \le 1,\ 
\beta=\alpha
},\\
\tilde{L}_{45} = & \curly{ (\a,\beta) \in \tilde{E} \mid 
\frac{1}{2} < \a \le \frac{3}{4},\ 
4\alpha - 2\beta - 1  = 0}, \\
\tilde{P}_{12345} = & \curly{ (\a,\beta) \in \tilde{E} \mid 
\a = \frac{1}{2},\ 
\beta = \frac{1}{2}
}.
\end{align*}
\end{definition}

The partitions of $E$ and $\tilde{E}$ given in \Cref{def:partition-E} and \Cref{def:partition-E-tilde} are plotted in \Cref{fig:E-and-E-tilde}. In \Cref{def:partition-E} and \Cref{def:partition-E-tilde}, the regions are named with letters $V ( \bar{V},\tilde{V})$, $F( \bar{F},\tilde{F})$, $L( \bar{L},\tilde{L})$, or $P( \tilde{P})$, representing that the regions are polyhedrons, planes, lines, and points in the parameter space, respectively. The regions named with $V$, $F$, $L$, and $P$ are in $E$ where $\gamma \in [1,2]$. The regions named with $\bar{V}$, $\bar{F}$, and $\bar{L}$ are in $E$ where $\gamma \in [\frac{1}{2},1)$. Finally, the regions named with $\tilde{V}$, $\tilde{F}$, $\tilde{L}$, and $\tilde{P}$ belong to $\tilde{E}$.  These regions form disjoint subsets whose union is $E$ and $\tilde{E}$.

\subsection{Asymptotic forms of roots}
\label{sec:roots}
With the partitions of $E$ and $\tilde{E}$ defined in the foregoing section, we now present the asymptotic forms of the roots to the characteristic equations \eqref{eq:characteristic-different} and \eqref{eq:characteristic-same} in \Cref{thm:roots-different} and \Cref{thm:roots-same}, respectively. 

\begin{theorem}
\label{thm:roots-different}
Let $\lambda_{n,i}$'s be the four roots to \eqref{eq:characteristic-different}, where $i \in \curly{1,2,3,4}$. They have the asymptotic forms given in \Cref{tab:roots-different} across the regions of the parameter space $E$ partitioned according to \Cref{def:partition-E}.
\end{theorem}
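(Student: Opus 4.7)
The proof is an application of the Newton-polygon / dominant-balance method to the quartic \eqref{eq:characteristic-different}. The plan is to treat each region of \Cref{def:partition-E} separately, extract the leading-order behavior of each of the four roots via a scaling ansatz, and then refine to higher order by Picard iteration.

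First, I would substitute $\lambda = \mu_n^{p}\, y$ in \eqref{eq:characteristic-different}. Each term $c_j\mu_n^{e_j}\lambda^{j}$ becomes $c_j \mu_n^{e_j+jp}\, y^{j}$, and the admissible leading exponents $p$ are exactly the negatives of the slopes of the Newton polygon, i.e., the lower convex hull of
\[
\{(4,0),\;(3,\beta),\;(2,\max(1,2\alpha,\gamma)),\;(1,\beta+\gamma),\;(0,1+\gamma)\}\subset\mathbb{R}^{2}.
\]
As $(\alpha,\beta,\gamma)$ ranges over $E$, the combinatorial type of this polygon changes precisely when three or more of the five points become collinear, or when the $\max$ in the coefficient of $\lambda^{2}$ switches between $1$, $2\alpha$, and $\gamma$. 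These transitions cut $E$ into exactly the strata $V_i$, $\bar V_i$, $F_{ij}$, $\bar F_{ij}$, $L_{\cdot}$, $\bar L_{\cdot}$, $P_{\cdot}$ listed in \Cref{def:partition-E}.

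Second, fix an open region such as one of the $V_i$ or $\bar V_i$. The Newton polygon has a fixed edge structure there, and each edge contributes a binomial balance $\sum_{j\in J}c_j y^{j}=0$ whose nonzero roots give the leading coefficient of a root in the scaling $\mu_n^{p}$. Plugging the refined ansatz $\lambda = c\,\mu_n^{p}(1+z_n)$ back into \eqref{eq:characteristic-different} and isolating $z_n$ produces a fixed-point equation $z_n=\mathcal{F}(z_n;\mu_n)$ with $\mathcal{F}=O(\mu_n^{-\eta})$, where $\eta>0$ is the gap between the selected edge of the polygon and the remaining vertices. This gap is strictly positive in the interior of each region, so Picard iteration converges and produces the higher-order corrections tabulated in \Cref{tab:roots-different}; applying the scheme once per edge yields all four roots $\lambda_{n,1},\ldots,\lambda_{n,4}$.

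Third, on the boundary strata $F_{ij}$, $\bar F_{ij}$, $L_{\cdot}$, $\bar L_{\cdot}$, and $P_{1256}$, two or more polygon edges coalesce, so the reduced polynomial has three or more terms and its roots are genuinely new: this is precisely what causes the qualitative change of the asymptotic formulas across the boundaries. The Picard scheme still applies provided the reduced polynomial has simple nonzero roots; at degenerate loci such as $L_{1234}$ or $L_{3456}$, one has to insert a logarithmic or secondary polynomial scaling of $z_n$, treated case by case. The principal obstacle is not mathematical depth but sheer enumeration: \Cref{def:partition-E} contains more than twenty strata, each carrying up to four distinct scalings, and nondegeneracy of every reduced polynomial must be checked. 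As indicated in \Cref{remark:eigenvalues}, this bookkeeping is carried out by the symbolic-manipulation algorithm, which enumerates the Newton-polygon edges, solves the reduced polynomials, and performs the perturbative expansions automatically, producing \Cref{tab:roots-different} as its output; verifying the output in a representative subregion is routine.
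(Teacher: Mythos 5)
Your proposal takes essentially the same route as the paper: the paper offers no detailed proof of Theorem~\ref{thm:roots-different}, deferring entirely to the symbolic-manipulation procedure of the cited prior works, and the Newton-polygon/dominant-balance scheme with iterative refinement that you describe is precisely the mathematical content that procedure automates (region boundaries at collinearity/degeneration of the polygon, reduced binomial or trinomial balances per edge, perturbative corrections). Two cautions if you were to execute it: with $\mu_n\to\infty$ and the exponents plotted as you wrote them, it is the \emph{upper} convex hull whose negated edge slopes give the admissible $p$; and since the tabulated real parts are typically of strictly lower order than both the moduli and the first correction (e.g.\ in $V_1$ one finds $P(i\sqrt2\,\mu_n^{\gamma/2})=-2\mu_n^{2\alpha+\gamma}$ after an exact cancellation of the damping terms, so the first Newton step is purely imaginary and the real part $-\tfrac14\mu_n^{2\alpha+\beta-\gamma}$ only emerges at the next order), the iteration must be pushed past the first correction and checked for further cancellations before Table~\ref{tab:roots-different} is actually recovered.
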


\begin{theorem}
\label{thm:roots-same}
Let $\tilde{\lambda}_{n,i}$'s be the four roots to \eqref{eq:characteristic-same}, where $i \in \curly{1,2,3,4}$. They have the asymptotic forms given in \Cref{tab:roots-same} across the regions of the parameter space $\tilde{E}$ partitioned according to \Cref{def:partition-E-tilde}.
\end{theorem}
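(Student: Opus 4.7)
The plan is to apply Newton-polygon asymptotics to the quartic characteristic equation \eqref{eq:characteristic-same}, combined with the symbolic perturbation algorithm alluded to in Remark~\ref{remark:eigenvalues}. Writing $\lambda = c\mu_n^p$ with $p\in\mathbb{R}$ and $c\in\mathbb{C}$ to be determined, I would first collect the exponent data
\[
(k,E_k)\,:\ (4,0),\ (3,\beta),\ (2,1),\ (2,2\alpha),\ (1,\beta+1),\ (0,2)
\]
coming from the coefficients of $\lambda^k$ in \eqref{eq:characteristic-same}. As $\mu_n\to\infty$, the admissible leading exponents $p$ are precisely the negatives of the slopes of the lower convex hull of these points (where, in the $k=2$ column, whichever of the two entries is larger for the given $(\alpha,\beta)$ is discarded), and the number of roots attached to each slope equals the horizontal length of the corresponding edge.

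For each region in \Cref{def:partition-E-tilde} the relative ordering of these points, and hence the convex hull, remains fixed, while it jumps across the $\tilde L_{\cdot\cdot}$ facets — which is exactly why the partition was introduced. I would therefore process the regions one by one: in each $\tilde F_i$ identify the dominant edges of the hull, solve the reduced binomial or trinomial for the leading coefficient $c$, and then substitute the refined ansatz $\lambda = c\mu_n^p\bigl(1 + c_1\mu_n^{-q_1} + c_2\mu_n^{-q_2} + \cdots\bigr)$ back into \eqref{eq:characteristic-same}, matching successive orders to pin down the exponents $q_j$ and coefficients $c_j$ tabulated in Table~\ref{tab:roots-same}. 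On the boundary sets $\tilde L_{\cdot\cdot}$ and the triple point $\tilde P_{12345}$ the convex hull degenerates so that three or four monomials enter the leading balance simultaneously; the roots from the adjacent open regions must coalesce into those of this degenerate equation, which serves as a built-in consistency check. A final sanity check is that $\mathrm{Re}\,\tilde\lambda_{n,i} < 0$, recovering the dissipativity already established in \cite{Hao-Kuang-Liu-Yong 2025}.

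The main obstacle is the combinatorial volume: eleven subregions, each with its own dominant-balance pattern and a second-order correction whose exponent $q_1$ itself depends on $(\alpha,\beta)$. Carrying this out and presenting the expansions uniformly in a table is precisely the scenario that Remark~\ref{remark:eigenvalues} flags as essentially infeasible by hand, so in practice I would invoke Z.~Kuang's symbolic routine — already validated in \cite{Deng-Han-Kuang-Zhang 2024, Han-Kuang-Zhang 2023, Hao-Kuang-Liu-Yong 2025, Kuang-Liu-Sare 2021} — to generate the expansions region by region, and restrict the human verification to direct substitution on a handful of representative cases (for instance the generic region $\tilde F_1$ where the balance is between $\lambda^2\cdot\mu_n$ and $\mu_n^2$, and the fully degenerate point $\tilde P_{12345}$ where all coefficient exponents collapse to a single Newton-polygon vertex).
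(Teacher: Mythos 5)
Your proposal takes essentially the same route as the paper: the paper gives no explicit proof of this theorem, deferring entirely to the symbolic dominant-balance algorithm of the cited prior works, which is precisely the Newton-polygon-plus-iterated-perturbation scheme you describe (including the degenerate balances on the lower-dimensional strata and the double-root situation in $\tilde F_1$, where $\lambda^4+2\mu_n\lambda^2+\mu_n^2$ forces a square-root correction). One caveat: since $\mu_n\to\infty$ the admissible exponents come from maximizing $E_k+kp$, i.e.\ the \emph{upper} convex hull of the points $(k,E_k)$, and in the $k=2$ column one must keep the \emph{larger} of $1$ and $2\alpha$ — applied literally, your "lower hull / discard the larger entry" rule would misidentify the leading balance in $\tilde F_2$, where the correct pair $-\tfrac12\mu_n^{\beta}\pm i\mu_n^{\alpha}$ arises from balancing $\lambda^4$ against $\mu_n^{2\alpha}\lambda^2$.
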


We highlight a few implications of \Cref{thm:roots-different} and \Cref{thm:roots-same}.
 First,  note that the union of $V_6$, $F_{56}$, $F_{46}$, $L_{3456}$, $L_{26}$,  $P_{1256}$,  $\bar{V}_6$, $\bar{F}_{56}$, $\bar{F}_{46}$, and $\bar{L}_{3456}$ correspond to the analytic region $R_1(\gamma)$, and the union of $\tilde{F}_4$, $\tilde{L}_{24}$,   $\tilde{L}_{45}$, and $\tilde{P}_{12345}$ correspond to the analytic region $\tilde{R}_1$, we omit the discussion of the roots in these regions as they are not needed to reason about the order of Gevrey class.

Secondly, the eigenvalues in the two theorems inform the formation of the regularity regions $R(\gamma)$ and $\tilde{R}$. In particular, from \Cref{tab:roots-different} and \Cref{tab:roots-same}, we can identify subsets of the regions in \Cref{def:partition-E} and \Cref{def:partition-E-tilde} that induce sequences of eigenvalues with a vertical asymptote (i.e., when the exponents of the real parts of the eigenvalues are smaller than or equal to 0, and the imaginary goes to infinity). Such subsets are excluded from regularity analysis as discussed in \Cref{sec:intro} and \Cref{sec:pre}, and the remaining subsets form $R(\gamma)$ and $\tilde{R}$. Furthermore, within $R(\gamma)$ and $\tilde{R}$, the two theorems provide Gevrey order candidates, as mentioned in \Cref{remark:eigenvalues}. In fact, based on \Cref{lemma:method}, these candidates are the ratios of the exponents of the imaginary parts over the exponents of the real parts of the complex conjugate roots of the regions in \Cref{tab:roots-different} and \Cref{tab:roots-same}. Note that when there are two pairs of complex conjugate roots in a region, the one with higher ratio is the candidate.  The regions that share the same candidate ratio shape $R_2(\gamma)$, $R_3(\gamma)$, $R_4(\gamma)$, and $R_5(\gamma)$  in \Cref{thm:different} and $\tilde{R}_2$, $\tilde{R}_3$, and  $\tilde{R}_4$ in \Cref{thm:same}.

Thirdly, the results reported in the two theorems cover regions of the parameter space that are both exponentially stable and polynomially stable. As such, these results extend those previously reported in \cite{Hao-Kuang-Liu-Yong 2025}, where only the roots associated with the polynomially stable regions are discussed. 

Finally, with the asymptotic forms of the eigenvalues reported in these two theorems, one can also show that the orders of Gevrey class obtained in the previous sections are optimal. Indeed, 
\begin{theorem}
\label{thm:optimality}
The order of Gevrey class described in \Cref{thm:different} and \Cref{thm:same} are optimal in the sense that the semigroup is not of Gevrey class order $\frac{1}{\mu+\varepsilon}$ for any $\varepsilon>0$.
\end{theorem}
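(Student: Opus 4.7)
The plan is to use the asymptotic expansions of the eigenvalues of $\mathcal{A}_{\alpha,\beta,\gamma}$ established in Theorem~\ref{thm:roots-different} and Theorem~\ref{thm:roots-same}, together with the \emph{necessary} part of Lemma~\ref{lemma:method}(ii), to rule out any better Gevrey index. The observation is simple: a sequence of eigenvalues approaching the imaginary axis too fast obstructs the spectral inclusion $\rho(\mathcal{A})\supseteq \Sigma_b(\delta)$ required for Gevrey class $\delta$.

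Fix an arbitrary region, say $R_i(\gamma)$ with $i\in\{2,3,4,5\}$, and select from Table~\ref{tab:roots-different} the pair of complex conjugate roots whose ratio (exponent of imaginary part)/(exponent of real part) is largest; call the corresponding root $\lambda_n = -A_n + i B_n$ with $A_n \sim c_1 \mu_n^{p}$ and $B_n \sim c_2 \mu_n^{q}$ for positive constants $c_1,c_2$ and exponents $p,q>0$ read off from the table. A direct algebraic check against the formula for $\mu$ in Theorem~\ref{thm:different} verifies the identity $p/q = \mu$, i.e.\ $1/\delta_{\text{crit}} = \mu$; the analogous identification holds region by region in $\widetilde R_j$ using Table~\ref{tab:roots-same} and Theorem~\ref{thm:same}.

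With this asymptotic information in hand, argue by contradiction. Suppose $e^{\mathcal{A}_{\alpha,\beta,\gamma}t}$ were of Gevrey class $\delta_0 := 1/(\mu+\varepsilon)$ for some $\varepsilon>0$ small enough that $\delta_0>1$. By Lemma~\ref{lemma:method}(ii), for every $b>0$ there exists $a\in\mathbb{R}$ with
\begin{equation*}
\rho(\mathcal{A}_{\alpha,\beta,\gamma}) \supseteq \Sigma_b(\delta_0) = \bigl\{\lambda\in\mathbb{C} \bigm| \Re\lambda > a - b\,|\Im\lambda|^{\mu+\varepsilon}\bigr\}.
\end{equation*}
Since $\lambda_n\in\sigma(\mathcal{A}_{\alpha,\beta,\gamma})$, necessarily $\lambda_n\notin \Sigma_b(\delta_0)$, which rearranges to $A_n \ge b\,B_n^{\mu+\varepsilon} - a$ for all large $n$. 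Substituting the asymptotic forms yields
\begin{equation*}
c_1\,\mu_n^{p}\bigl(1+o(1)\bigr) \;\ge\; b\,c_2^{\mu+\varepsilon}\,\mu_n^{q(\mu+\varepsilon)}\bigl(1+o(1)\bigr) - a.
\end{equation*}
Since $p = q\mu < q(\mu+\varepsilon)$, the right-hand side grows strictly faster than the left, contradicting the inequality as $n\to\infty$. Hence the semigroup cannot be of Gevrey class $1/(\mu+\varepsilon)$, establishing optimality.

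The main obstacle is not the contradiction argument itself (which is uniform across all regions) but the bookkeeping: one must go through each of $R_2(\gamma),\dots,R_5(\gamma)$ and $\widetilde R_2,\widetilde R_3,\widetilde R_4$, identify from the eigenvalue tables the \emph{dominant} complex conjugate pair (the one with the largest $q/p$ when two pairs coexist, as flagged in the discussion after Theorem~\ref{thm:roots-same}), and verify the algebraic identity $p/q=\mu$. Some care is needed on the boundary faces and edges where two regions meet (so as to pick the eigenvalue pair that actually governs the rate), and in the case $\mu+\varepsilon\ge 1$ one must invoke differentiability rather than Gevrey regularity of class $<1$, but the same spectral lower bound via $A_n \ge b\,B_n^{\mu+\varepsilon}-a$ rules this out as well.
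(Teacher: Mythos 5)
Your proposal is correct and follows essentially the same route as the paper: both extract from Tables \ref{tab:roots-different} and \ref{tab:roots-same} a sequence of eigenvalues with $|\Re\lambda_n|\sim|\Im\lambda_n|^{\mu}$ and then invoke the necessary spectral condition for Gevrey regularity to exclude any index $\frac{1}{\mu+\varepsilon}$. The only difference is cosmetic — the paper compresses your contradiction argument into a citation of \cite[Corollary 2.2]{Hao-Liu-Yong 2015} (the condition $\limsup_{n\to\infty}\Re\lambda_n/|\Im\lambda_n|^{\mu+\varepsilon}=0$), whereas you unpack the same obstruction directly from the "only if" direction of Lemma \ref{lemma:method}(ii).
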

\begin{proof}
Without loss of generality, we assume $(\a,\b,\g)\in R_2(\g)$.
By Table \ref{tab:roots-different}, for any $\varepsilon>0$, there always exists a sequence of eigenvalues such that
    $$\overline{\underset{n\to \infty}{\lim}}\frac{\Re \lambda_n}{|\Im \lambda_n|^{\mu+\varepsilon}}=0.$$
   By \cite[Corollary 2.2]{Hao-Liu-Yong 2015}, this implies that $e^{\mathcal{A_{\a,\b,\g}}t}$ is not of Gevrey class order $\frac{1}{\mu+\varepsilon}$.
\end{proof}

\begin{table}[H]
\centering
\caption{Asymptotic Forms of the Roots to \eqref{eq:characteristic-different}}
\bgroup
\resizebox{\textwidth}{!}{
\begin{tabular}{crr}
\hline
Region & $\lambda_{n,1}$ and $\lambda_{n,2}$ & $\lambda_{n,3}$  and $\lambda_{n,4}$\\
\hline
$V_{1}$ & 
$-\frac{1}{4}\m_n^{2\a+\beta-\g} (1+o(1)) \pm i\sqrt{2} \m_n^{\frac{\g}{2}}(1+o(1))$ & 
$-\frac{1}{2}\m_n^{\beta} (1+o(1)) \pm i \m_n^{\frac{1}{2}} (1+o(1))$\\
$V_{2}$ & 
$-\frac{1}{2}\m_n^{\beta}(1+o(1)) \pm i\m_n^{\a}(1+o(1))$ & 
$- \m_n^{-2\a+\beta+\g}(1+o(1)) \pm i\sqrt{2}\m_n^{-\a+\frac{\g}{2}+\frac{1}{2}}(1+o(1))$ \\
$V_{3}$ & 
$-\frac{1}{4}\m_n^{2\a+\beta-\g}(1+o(1)) \pm i\sqrt{2}\m_n^{\frac{\g}{2}}(1+o(1))$ & 
$-\m_n^{1-\beta}(1+o(1))$ and $-\m_n^{\beta}(1+o(1))$\\
$V_{4}$ & 
$-\frac{1}{2}\mu_n^{\beta}(1+o(1)) \pm i\mu_n^{\alpha}(1+o(1))$  & 
$-\mu_n^{1-\beta}(1+o(1))$ and 
$-2\mu_n^{-2\alpha+\beta+\gamma}(1+o(1))$\\
$V_{5}$ & 
$-\m_n^{1-\beta}(1+o(1))$ and
$-\m_n^{\beta}(1+o(1))$ & 
$-\frac{1}{2}\m_n^{2\a-\beta}(1+o(1)) \pm i\sqrt{2}\m_n^{\frac{\g}{2}}(1+o(1))$\\
$V_{6}$ &  $-\mu_n^{2\alpha-\beta}(1+o(1))$
and 
$-\mu_n^{\beta}(1+o(1))$ & 
$-2\mu_n^{-2\alpha+\beta+\gamma}(1+o(1))$  and  $-\mu_n^{1-\beta}(1+o(1))$ \\
$F_{12}$ & 
$-\frac{1}{6}\mu_n^{\beta}(1+o(1)) \pm i\sqrt{3}\mu_n^{\gamma/2}(1+o(1))$ & 
$-\frac{1}{3}\mu_n^{\beta}(1+o(1)) \pm i\frac{\sqrt{6}}{3}\mu_n^{\frac{1}{2}}(1+o(1))$ \\
$F_{13}$ & 
$-\frac{1}{4}\m_n^{2\a+\beta-\g}(1+o(1))\pm i \sqrt{2} \m_n^{\frac{\g}{2}}(1+o(1))$ & 
$-\frac{1}{2}\m_n^{\beta}(1+o(1)) \pm i\frac{\sqrt{3}}{2} \m_n^{\frac{1}{2}}(1+o(1))$ \\
$F_{24}$ & 
$-\frac{1}{2}\mu_n^{\beta}(1+o(1)) \pm i\mu_n^{\alpha}(1+o(1))$ & 
$(-1 \pm i)\mu_n^{1-\beta}(1+o(1))$ \\
$F_{34}$ & 
$-\frac{1}{6}\mu_n^{\beta}(1+o(1)) \pm i\sqrt{3}\mu_n^{\frac{\gamma}{2}}(1+o(1))$ & 
$-\mu_n^{1-\beta}(1+o(1))$ and 
$-\frac{2}{3}\mu_n^{\beta}(1+o(1))$\\
$F_{35}$ & 
$- \mu_n^{1-\beta }(1+o(1))$ and
$-\m_n^{\beta}(1+o(1))$ & 
$-\frac{1}{6}\m_n^{2\a-\beta}(1+o(1)) \pm i \sqrt{2} \m_n^{\frac{\g}{2}}(1+o(1))$\\
$L_{1234}$ & 
$-\frac{1}{6}\mu_n^{\frac{1}{2}}(1+o(1)) \pm i\sqrt{3}\mu_n^{\frac{\gamma}{2}}(1+o(1))$ & 
$(-\frac{1}{3}\pm i\frac{\sqrt{5}}{3})\mu_n^{\frac{1}{2}}(1+o(1))$\\
$F_{1}$ & 
$-\mu_n^{2\alpha+\beta-1}(1+o(1)) \pm i\sqrt{2}\mu_n^{\frac{1}{2}}(1+o(1))$ & 
$-\frac{1}{2}\mu_n^{\beta}(1+o(1)) \pm i\mu_n^{\frac{1}{2}}(1+o(1))$ \\
$L_{12}$ & 
$-\frac{1}{4}\mu_n^{\beta}(1+o(1))\pm i\sqrt{2+\sqrt{2}}\mu_n^{\frac{1}{2}}(1+o(\
1))$ & $-\frac{1}{4}\mu_n^{\beta}(1+o(1))\pm i\sqrt{2-\sqrt{2}}\mu_n^{\frac{1}{2}}(1+o(\
1))$\\
$L_{15}$ & 
$-\frac{1}{3}\mu_n^{2\alpha-\frac{1}{2}}(1+o(1)) \pm i\sqrt{2}\mu_n^{\frac{1}{2}}(1+o(1))$ & 
$-\frac{1}{2}\mu_n^{\frac{1}{2}}(1+o(1)) \pm \frac{\sqrt{3}}{2}i\mu_n^{\frac{1}{2}}(1+o(1))$ \\
$L_{26}$ & 
$-\frac{1}{2}\mu_n^{\beta}(1+o(1)) \pm i\frac{\sqrt{3}}{2} \mu_n^{\alpha}(1+o(1))$ & 
$(-1 \pm i)\mu_n^{1-\beta}(1+o(1))$ \\
$\bar{V}_{1}$ & 
$-\frac{1}{2}\mu_n^{\beta}(1+o(1)) \pm i\mu_n^{\frac{1}{2}}(1+o(1))$ & 
$-\mu_n^{2\alpha+\beta+\gamma-2}(1+o(1)) \pm  i\sqrt{2}\mu_n^{\frac{\gamma}{2}}(1+o(1))$\\
$\bar{V}_{2}$ & 
$-\frac{1}{2}\m_n^{\beta}(1+o(1)) \pm i\m_n^{\a}(1+o(1))$ & 
$-\m_n^{-2\a+\beta+\g}(1+o(1))\pm i\sqrt{2}\m_n^{-\a+\frac{\g}{2}+\frac{1}{2}}(1+o(1))$ \\
$\bar{V}_{3}$ & 
$-\m_n^{1-\beta}(1+o(1))$  and 
$-\m_n^{\beta}(1+o(1))$ & 
$-\m_n^{2\a+\beta+\g-2}(1+o(1)) \pm i\sqrt{2}\m_n^{\frac{\gamma}{2}}(1+o(1))$ \\
$\bar{V}_{4}$ & 
$-\m_n^{2\a-\beta}(1+o(1))$  and 
$-\m_n^{\beta}(1+o(1))$ & 
$-\m_n^{-2\a+\beta+\g}(1+o(1)) \pm i\sqrt{2}\m_n^{-\a+\frac{\g}{2}+\frac{
1}{2}}(1+o(1))$ \\
$\bar{V}_{5}$ & 
$-\m_n^{1-\beta}(1+o(1))$  and  
$-\m_n^{\beta}(1+o(1))$ & 
$-\frac{1}{2}\m_n^{2\a-\beta}(1+o(1)) \pm i\sqrt{2}\m_n^{\frac{\gamma}{2}}(1+o(1))$ \\
$\bar{F}_{12}$ & 
$-\frac{1}{2}\m_n^{\beta}(1+o(1)) \pm i\sqrt{2}\m_n^{\frac{1}{2}}(1+o(1))$ & 
$-\frac{1}{4}\m_n^{\beta+\g-1}(1+o(1)) \pm i\m_n^{\frac{\gamma}{2}}(1+o(1))$\\
$\bar{F}_{13}$ & 
$-\m_n^{\frac{1}{2}}(1+o(1)) \pm i\frac{\sqrt{3}}{2}\m_n^{\frac{1}{2}}(1+o(1)\
)$ & 
$-\m_n^{2\a+\g-\frac{3}{2}}(1+o(1))\pm i \sqrt{2}  \m_n^{\frac{\gamma}{2}}(1+o(1))$ \\
$\bar{F}_{24}$ & 
$-\frac{1}{2}\m_n^{\a}(1+o(1))\pm i \frac{\sqrt{3}}{2} \m_n^{\a}(1+o(1))$ & 
$-\m_n^{\g-\a}(1+o(1)) \pm i\sqrt{2}\m_n^{-\a+\frac{\g}{2}+\frac{1}{2}}(1+o(1))$ \\
$\bar{F}_{34}$ & 
$-2\mu_n^{1-\beta}(1+o(1))$ and 
$-\mu_n^{\beta}(1+o(1))$ & 
$-\frac{1}{4}\mu_n^{\beta+\gamma-1}(1+o(1)) \pm i\mu_n^{\gamma/2}(1+o(1))$\\
$\bar{F}_{35}$ & 
$-\m_n^{\frac{\gamma}{2}}(1+o(1))$  and
$-\m_n^{1-\frac{\g}{2}}(1+o(1))$ & 
$-\frac{1}{3}\m_n^{2\a+\frac{\g}{2}-1}(1+o(1)) \pm i\sqrt{2}\m_n^{\frac{\gamma}{2}}(1+o(1))$\\
$\bar{L}_{1234}$ & 
$-\frac{1}{2}\mu_n^\frac{1}{2}(1+o(1)) \pm i\frac{\sqrt{7}}{2}\mu_n^\frac{1}{2}(1+o(1))$ & 
$-\frac{1}{4}\mu_n^{\gamma-\frac{1}{2}}(1+o(1)) \pm i\mu_n^{\frac{\gamma}{2}}(1+o(1))$ \\
\hline
\end{tabular}
}
\egroup
\label{tab:roots-different}
\end{table}

\begin{table}[H]
\caption{Asymptotic Forms of the Roots to \eqref{eq:characteristic-same}}
\centering
\bgroup
\resizebox{\textwidth}{!}{
\begin{tabular}{crr}
\hline
Region & $\tilde{\lambda}_{n,1}$ and $\tilde{\lambda}_{n,2}$ & $\tilde{\lambda}_{n,3}$  and $\tilde{\lambda}_{n,4}$\\
\hline
$\tilde{F}_{1}$ & $-\frac{1}{4}\mu_n^{\beta}(1+o(1))-i\mu_n^{\frac{1}{2}}(1+o(1))$ &
$-\frac{1}{4}\mu_n^{\beta}(1+o(1))+i\mu_n^{\frac{1}{2}}(1+o(1))$\\
$\tilde{F}_{2}$ & 
$-\frac{1}{2}\mu_n^{\beta}(1+o(1))\pm i\mu_n^{\alpha}(1+o(1))$ & 
$-\frac{1}{2}\mu_n^{-2\alpha+\beta+1}(1+o(1))\pm i\mu_n^{1-\alpha}(1+o(1))$\\
$\tilde{F}_3$ & $\curly{-\frac{1}{2} \mu_n^{2 \alpha -\beta }, -\frac{1}{2}\mu_n^{\beta}} (1+o(1))+i \mu_n^{\frac{1}{2}} (1+o(1))$
& $\curly{-\frac{1}{2}\mu_n^{\beta}, -\frac{1}{2} \mu_n^{2 \alpha -\beta }} (1+o(1))-i \mu_n^{\frac{1}{2}} (1+o(1))$\\
$\tilde{F}_5$ & $-\mu_n^{1-\beta }(1+o(1))$ and $-\mu_n^{\beta }(1+o(1))$ & 
$-\frac{1}{2}\mu_n^{2\alpha-\beta}(1+o(1))\pm i \mu_n^{\frac{1}{2}}(1+o(1))$
\\
$\tilde{L}_{12}$ & $-\frac{5+\sqrt{5}}{20} \mu_n^{\beta}(1+o(1)) \pm i \frac{\sqrt{5}+1}{2} \mu_n^{\frac{1}{2}}(1+o(1))$ &
$-\frac{5-\sqrt{5}}{20} \mu_n^{\beta}(1+o(1)) \pm i \frac{\sqrt{5}-1}{2} \mu_n^{\frac{1}{2}}(1+o(1))$
\\
$\tilde{L}_{35}$ & 
$-\frac{1}{2}\mu_n^{2\alpha-\frac{1}{2}}(1+o(1))\pm i \mu_n^{\frac{1}{2}}(1+o(1))
$ & $-\frac{1}{2}\mu_n^{\frac{1}{2}}(1+o(1))\pm i\frac{\sqrt{3}}{2} \mu_n^{\frac{1}{2}}(1+o(
1))$
\\
$\tilde{L}_{13}$ & $-\frac{1}{4}\mu_n^{\beta}(1+o(1))-i\mu_n^{\frac{1}{2}}(1+o(1))$ &
$-\frac{1}{4}\mu_n^{\beta}(1+o(1))+i\mu_n^{\frac{1}{2}}(1+o(1))$
\\
\hline
\end{tabular}
}
\egroup
\label{tab:roots-same}
\end{table}

\section{Applications}\label{sec:applications}
\setcounter{equation}{0}
\setcounter{theorem}{0}
\rm

In this section, we give several examples of two coupled second order PDEs where $a$ is a positive constant, and apply the main results in this paper to obtain the regularity of these systems. 
Let $\Omega$ be a bounded domain in $R^{n}$ with smooth boundary $\partial\Omega.$
\vspace{3mm}

\noindent Example 1.
\be
\left\{ \begin{array}{ll}  u_{tt}(x,t) = -a \Delta^2 u(x,t) + b(-\Delta)^\alpha y_t(x,t), & x\in\Omega, \quad t>0, \\
     y_{tt}(x,t)= \Delta y(x,t) - b(-\Delta)^\alpha u_t(x,t) + k\Delta y_t(x,t), & x\in\Omega, \quad t>0, \\
     u(x,t)=\Delta u(x,t)=y(x,t) = 0, & x\in \partial\Omega,\; t>0, \end{array} \right.
\ee
where $0\le \alpha \le \frac{3}{2}$, and $a,k>0$, $b\ne 0$. When $\alpha=1$, this system models a type III thermo-elastic plate and was investigated in \cite{Liu-Quintanilla 2010}. They showed that the system is analytic. The general case $\alpha\in [0, \frac{3}{2}]$ was studied in \cite{Zelati-Delloro-Pata 2013}. They showed that the system is exponentially stable only if $\alpha \ge \frac{1}{2}$. 

Let $\cH := (H_0^1(\Omega)\cap H^2(\Omega))\times L^2(\Omega)\times H_0^1(\Omega)\times L^2(\Omega)$. Define $Af=-\Delta f$  with ${\mathcal D}(A) = \{f\in  H^1(\Omega)\mid Af  \in L^2(\Omega),\; f= 0 \; \mbox{on}\;  \partial\Omega \}$}.  Then the system corresponds to
our abstract system with $\gamma = 2,\; \beta=1$ and $\alpha\in [0,\frac{3}{2}]$. 
By Theorem \ref{thm:different}, we conclude that the system is  (see Figure \ref{fig:r0}e)
$$ 
\left\{ \begin{array}{ll} 
 \mbox{not differentiable} & \mbox{when} \quad \alpha\in [0,\frac{1}{2}]; \\
\mbox{a Gevrey class}\; \delta> \frac{1}{2\alpha -1} & \mbox{when} \quad \alpha \in (\frac{1}{2}, 1); \\
\mbox{analytic} & \mbox{when} \quad \alpha =1; \\
\mbox{a Gevrey class}\; \delta> \alpha & \mbox{when} \quad \alpha \in (1, \frac{3}{2}]. 
\end{array} \right.
$$

\vspace{0.5 cm}

\noindent Example 2.
\be
\left\{ \begin{array}{ll}  
     u_{tt}(x,t)= a\Delta u(x,t)  + b (-\Delta)^\alpha y_t(x,t), & x\in\Omega, \quad t>0, \\ y_{tt}(x,t) = - \Delta^2 y(x,t) - b(-\Delta)^\alpha u_t(x,t) + k\Delta y_t(x,t), & x\in\Omega, \quad t>0, \\
     u(x,t)=y(x,t)=\Delta y(x,t) = 0, & x\in \partial\Omega,\; t>0, \end{array} \right.
\ee
where $0\le \alpha \le \frac{3}{4}$, and $a,k>0$, $b\ne 0$. Let $\cH := H_0^1(\Omega)\times L^2(\Omega)\times (H_0^1(\Omega)\cap H^2(\Omega))\times L^2(\Omega)$. Define $A$ by $Af(x)= \Delta^2 f(x)$  with ${\mathcal D}(A) = \{f\in  H^2(\Omega)\mid Af  \in L^2(\Omega),\; f=\Delta f = 0 \; \mbox{on}\;  \partial\Omega \}$. Then the system corresponds to 
the abstract system with $\beta = \gamma=\frac{1}{2}$ and $\alpha\in [0,\frac{3}{4}]$. Since it does not intersect with the region $R(\frac{1}{2})$, the system is not differentiable
for all $\alpha\in [0,\frac{3}{4}]$  (see Figure \ref{fig:r0}a}).

The results from Examples 1 and 2 show that for a coupled plate-wave system, the solution regularity is  weaker when damping is applied to the plate equation than when it is applied to the wave equation.

\vspace{0.5 cm}

\noindent Example 3.
\be
\left\{ \begin{array}{ll}  u_{tt}(x,t) = -a \Delta^2 u(x,t) - b\Delta y_t(x,t), & x\in\Omega, \quad t>0, \\
     y_{tt}(x,t)= \Delta y(x,t)  +  b\Delta u_t(x,t) - k(-\Delta)^\beta y_t(x,t), & x\in\Omega, \quad t>0, \\
     u(x,t)=\Delta u(x,t)=y(x,t) = 0, & x\in \partial\Omega,\; t>0, \end{array} \right.
\ee
where $0\le \beta \le 1$, and $a,k>0$, $b\ne 0$. This system was investigated in \cite{Suarez-Mendes 2021}. They showed that the system is analytic when $\beta=1$; the system is a Gevrey class of $\delta> \frac{2+\beta}{\beta}$, which is clearly not sharp since it is not equal to 1 when $\beta=1$. 

Let $\cH := (H_0^1(\Omega)\cap H^2(\Omega))\times L^2(\Omega)\times H_0^1(\Omega)\times L^2(\Omega)$. Define $Af=-\Delta f$ with ${\mathcal D}(A) = \{f\in  H^1(\Omega)\mid Af  \in L^2(\Omega),\; f = 0 \; \mbox{on}\;  \partial\Omega \}$.  Then the system corresponds to our abstract system with $\gamma = 2,\; \alpha=1$ and $\beta\in [0,1]$.  By Theorem \ref{thm:different}, we conclude that the system is (see Figure \ref{fig:r0}e).
$$ 
\left\{ \begin{array}{ll} 
\mbox{not differentiable} & \mbox{when} \quad \beta=0; \\
\mbox{a Gevrey class}\; \delta> \frac{1}{\beta} & \mbox{when} \quad \beta \in (0,1); \\
\mbox{analytic} & \mbox{when} \quad \beta=1. 
\end{array} \right.
$$

\vspace{0.5 cm}

\noindent Example 4.
\be
\left\{ \begin{array}{ll}  
     u_{tt}(x,t)= a\Delta u(x,t)  - b \Delta y_t(x,t), & x\in\Omega, \quad t>0, \\ y_{tt}(x,t) = - \Delta^2 y(x,t) + b\Delta u_t(x,t) - k(-\Delta)^\beta y_t, & x\in\Omega, \quad t>0, \\
     u(x,t)=y(x,t)=\Delta y(x,t) = 0, & x\in \partial\Omega,\; t>0. \end{array} \right.
\ee
Let $\cH := H_0^1(\Omega)\times L^2(\Omega)\times (H_0^1(\Omega)\cap H^2(\Omega))\times L^2(\Omega)$. Define $A$ by $Af = \Delta^2 f $ with ${\mathcal D}(A) = \{f\in  H^2(\Omega)\mid Af  \in L^2(\Omega),\; f=\Delta f = 0 \; \mbox{on}\;  \partial\Omega \}$. Then the system corresponds to 
the abstract system with $\alpha = \gamma=\frac{1}{2}$ and $\beta\in [0,1]$.  By Theorem \ref{thm:different} we conclude that the system is  (see Figure  \ref{fig:r0}a)
$$ 
\left\{ \begin{array}{ll} 
\mbox{not differentiable} & \mbox{when} \quad \beta\in [0, \frac{1}{2}]; \\
\mbox{a Gevrey class}\; \delta> \frac{1}{4\beta - 2} & \mbox{when} \quad \beta \in (\frac{1}{2}, \frac{3}{4}); \\
\mbox{analytic} & \mbox{when} \quad \beta=\frac{3}{4}; \\
\mbox{a Gevrey class}\; \delta> \frac{1}{4(1-\beta)} & \mbox{when} \quad \beta \in (\frac{3}{4}, 1); \\
\mbox{not differentiable} & \mbox{when} \quad \beta=1,
\end{array} \right.
$$

\vspace{5 mm}

\noindent Example 5.
\be
\left\{ \begin{array}{ll}  u_{tt}(x,t) = -a \Delta^2 u(x,t) - b\Delta y_t(x,t), & x\in\Omega, \quad t>0, \\
     y_{tt}(x,t)= -\Delta^2 y(x,t)  +  b\Delta u_t(x,t) - k(\Delta^2)^\beta y_t(x,t)], & x\in\Omega, \quad t>0, \\
     u(x,t)=y(x,t)=\Delta y(x,t) = 0, & x\in \partial\Omega,\; t>0, \end{array} \right.
\ee
where $0\le \beta \le 1$, and $a,k>0$, $b\ne 0$. This system was investigated in \cite{Han-Liu 2019} for two special cases: (i) $a=1,\beta=\frac{1}{2}$ and (ii) $a=1,\beta=1$. They showed that the system is analytic when $\beta=\frac{1}{2}$; the system is not differentiable when $\beta=1$. 

Let $\cH := (H_0^1(\Omega)\cap H^2(\Omega))\times L^2(\Omega)\times (H_0^1\cap H^2(\Omega))\times L^2(\Omega)$. Define $A$ by $Af =\Delta^2f $ with ${\mathcal D}(A) = \{f\in  H^2(\Omega)\mid Af  \in L^2(\Omega),\; f=\Delta f = 0 \; \mbox{on}\;  \partial\Omega \}$. Then the system corresponds to 
our abstract system with $\gamma = 1$, $\alpha=\frac{1}{2}$, and $\beta\in [0,1]$. Thus, by Theorem \ref{thm:same}, we have that the system is  (see Figure 3b)
$$ 
\left\{ \begin{array}{ll} 
\mbox{not differentiable} & \mbox{when} \quad \beta=0; \\
\mbox{a Gevrey class}\; \delta> \frac{1}{2\beta} & \mbox{when} \quad \beta \in (0, \frac{1}{2}); \\
\mbox{analytic} & \mbox{when} \quad \beta=\frac{1}{2}; \\
\mbox{a Gevrey class}\; \delta> \frac{1}{2(1-\beta)} & \mbox{when} \quad \beta \in (\frac{1}{2}, 1); \\
\mbox{not differentiable} & \mbox{when} \quad \beta=1;
\end{array} \right.
$$
However, by Theorem \ref{thm:different}, the above conclusions remain valid even when $a \ne 1$ (see Figure 3a). This case is somewhat special, as one would generally expect the regularity to improve when the wave speeds are the same. 

\vspace{5 mm}
\noindent Example 6.
\be
\left\{ \begin{array}{ll}  u_{tt}(x,t) = -a \Delta^2u(x,t) - b(-\Delta)^\frac{1}{2} y_t(x,t), & x\in\Omega, \quad t>0, \\
     y_{tt}(x,t)= -\Delta^2 y(x,t)  +  b(-\Delta)^\frac{1}{2} u_t(x,t) - k(-\Delta)^\beta y_t(x,t)], & x\in\Omega, \quad t>0, \\
     u(x,t)=y(x,t)=\Delta y(x,t) = 0, & x\in \partial\Omega,\; t>0, \end{array} \right.
\ee
where $0\le \beta \le 1$, and $a,k>0$, $b\ne 0$. 

Let $\cH := (H_0^1(\Omega)\cap H^2(\Omega))\times L^2(\Omega)\times (H_0^1(\Omega)\cap H^2(\Omega))\times L^2(\Omega)$. Define $A$ by $Af =\Delta^2f $  with ${\mathcal D}(A) = \{f\in  H^2(\Omega)\mid Af  \in L^2(\Omega),\; f=\Delta f = 0 \; \mbox{on}\;  \partial\Omega \}$. Then the system corresponds to 
our abstract system with $\gamma = 1$, $\alpha=\frac{1}{4}$, and $\beta\in [0,1]$. 

If $a\ne 1$, by Theorem \ref{thm:different}, the associated semigroup is not differentiable for all $\beta\in [0, 1]$
(see Figure 3a). 

If $a=1$, by Theorem \ref{thm:same}, the system is  (see Figure 3b)
$$ 
\left\{ \begin{array}{ll}
\mbox{not differentiable} & \mbox{when} \quad \beta=0; \\
\mbox{a Gevrey class} \; \delta> \frac{1}{2\beta}& \mbox{when} \quad \beta\in (0,\frac{1}{4}); \\
\mbox{a Gevrey class}\; \delta> \frac{1}{1-2\beta} & \mbox{when} \quad \beta \in [\frac{1}{4}, \frac{1}{2}); \\
\mbox{not differentiable} & \mbox{when} \quad \beta\in [\frac{1}{2},1];
\end{array} \right.
$$

\section{Conclusion}\label{sec:conclusion}

In this paper, we conducted a comprehensive regularity analysis of the coupled hyperbolic system with indirect damping \eqref{1.1}. The system generates a strongly continuous semigroup of contractions $e^{\mathcal{A}_{\a,\b,\g}t}$ on the Hilbert space $D(A^\frac{\gamma}{2})\times H \times D(A^\frac{1}{2})\times H$.  While the asymptotic stability of the same system was previously established in \cite{Hao-Kuang-Liu-Yong 2025}, we here refine and extend the regularity characterization by considering two distinct cases based on the wave speeds.

(i) \textbf{The case of different wave speeds, i.e., $(a,\gamma)\neq (1,1)$.} 
The parameter space  $E= [0, \frac{\gamma+1}{2}] \times [0,1] \times [\frac{1}{2}, 2]$ was partitioned in \cite{Hao-Kuang-Liu-Yong 2025} into five disjoint regions  $S_1(\gamma), S_2(\gamma), \cdots, S_5(\gamma)$, where the system is exponentially stable in $S_1(\gamma)$; polynomially stable with optimal decay rates in $S_2(\gamma)-S_4(\gamma)$; and strongly stable in $ S_5(\gamma)$. In the present work, we further subdivide the interior of the exponentially stable region $S_1(\gamma)$, together with the boundary $\beta=1$, into five parts $R_1(\gamma), R_2(\gamma),\cdots,R_5(\gamma)$. We prove that the  $C_0$-semigroup generated by \eqref{1.1} is analytic in $R_1(\gamma)$, and belongs to Gevrey classes of different orders in $ R_2(\gamma)$–$R_5(\gamma)$. In contrast, for $(\alpha, \beta, \gamma) \notin \cup_{i=1}^5 R_i$, the semigroup is not differentiable, as there exists a sequence of system eigenvalues that approach the imaginary axis asymptotically, violating the spectral condition required for differentiable semigroups. The corresponding results are summarized in Table~\ref{tab:stability-regularity1}.

\begin{table}[h]
\centering
\caption{Stability and regularity properties over parameter regions when $(a,\gamma)\neq (1,1)$.}
\renewcommand{\arraystretch}{1.2}
\resizebox{\textwidth}{!}{
\begin{tabular}{c|c|c}
\hline
\textbf{Region} & \textbf{Stability} & \textbf{Regularity} \\
\hline
\multirow{6}{*}{%
  \begin{tabular}[c]{@{}c@{}}
  $S_1(\gamma) \cap R_1(\gamma)$ \\
  $S_1(\gamma) \cap R_2(\gamma)$ \\
  $S_1(\gamma) \cap R_3(\gamma)$ \\
  $S_1(\gamma) \cap R_4(\gamma)$ \\
  $S_1(\gamma) \cap R_5(\gamma)$ \\
  $S_1(\gamma) \setminus \cup_{i=1}^5 R_i(\gamma)$
  \end{tabular}
} 
& \multirow{6}{*}{Exponentially stable} 
&  Analytic \\
&   &Gevrey class $\delta > \frac{\gamma}{2(2\alpha - \beta)}$ \\
&  &Gevrey class $\delta > \frac{\gamma}{2(2\alpha + \beta - (\gamma \vee (2 - \gamma)))}$ \\
&  & Gevrey class $\delta > \frac{\alpha}{\beta}$ \\
&  & Gevrey class $\delta > \frac{-2\alpha + \gamma + 1}{2(-2\alpha + \beta + \gamma)}$ \\
& &Not differentiable \\
\hline
$S_2(\gamma)$ & Polynomially stable of order $\frac{\gamma}{2(\beta - 2\alpha)}$ & \multirow{4}{*}{Not differentiable} \\
$S_3(\gamma)$ & Polynomially stable of order $\frac{\gamma}{2(|\gamma - 1| + 1 - \beta - 2\alpha)}$ &  \\
$S_4(\gamma)$ & Polynomially stable of order $\frac{\gamma + 1 - 2\alpha}{2(2\alpha - \beta - \gamma)}$ & \\
$S_5(\gamma)$ & Strongly stable & \\
\hline
\end{tabular}
}
\label{tab:stability-regularity1}
\end{table}

(ii) \textbf{The case of identical wave speeds, i.e., $(a, \gamma) = (1,1)$.} 
In this setting, the parameter space $\tilde{E} = [0,1] \times [0,1]$ was divided into the exponentially stable region $ \tilde{S}_1 $, the polynomially stable regions $\tilde{S}_2, S_4(1)$, and the strongly stable region $,S_5(1)$ in \cite{Hao-Kuang-Liu-Yong 2025}. We further partition the interior of the region $\tilde{S}_1$, together with the boundary $\beta=1$, into four subregions. We show that the semigroup is analytic in  $\tilde{R}_1$and Gevrey of various orders in  $\tilde{R}_2–\tilde{R}_4 $. For the same reason, one sees that the semigroup is not differentiable when $(\alpha, \beta, \gamma) \notin \cup_{i=1}^4 \tilde{R}_i$. The results are detailed in Table~\ref{tab:stability-regularity2}.
\begin{table}[h]
\centering
\caption{Stability and regularity properties over parameter regions when $(a,\gamma)=(1,1)$.}
\renewcommand{\arraystretch}{1.2}
\begin{tabular}{c|c|c}
\hline
\textbf{Region} & \textbf{Stability} & \textbf{Regularity} \\
\hline
\multirow{5}{*}{%
  \begin{tabular}[c]{@{}c@{}}
  $\tilde{S}_1\cap \tilde{R}_1$ \\
  $\tilde{S}_1 \cap \tilde{R}_2$ \\
  $\tilde{S}_1 \cap \tilde{R}_3$ \\
  $\tilde{S}_1 \cap \tilde{R}_4$ \\
  $\tilde{S}_1 \setminus \cup_{i=1}^4 \tilde{R}_i$
  \end{tabular}
} 
& \multirow{5}{*}{Exponentially stable} 
&  Analytic \\
&   &Gevrey class $\delta > \frac{1}{2(2\alpha-\beta)}$ \\
&  &Gevrey class $\delta > \frac{1}{2\beta}$  \\
&  & Gevrey class $\delta > \frac{1-\alpha}{-2\alpha+\beta+1}$ \\
& &Not differentiable \\
\hline
$\tilde{S}_2$ & Polynomially stable of order $\frac{1}{2(\beta - 2\alpha)}$ &  \multirow{3}{*}{Not differentiable} \\
$S_4(1)$ & Polynomially stable of order $\frac{1 - \alpha}{2\alpha - \beta - 1}$ & \\
$S_5(1)$ & Strongly stable & \\
\hline
\end{tabular}
\label{tab:stability-regularity2}
\end{table}

 Comparing to the case of different wave speeds, the identical wave speed setting yields better stability and regularity results, see Remark \ref{7.19}. Moreover, by analyzing the asymptotic expansion of the generator’s eigenvalues in \Cref{sec:Asymptotic}, we see the Gevrey orders given in the above tables are optimal.

\end{document}